\documentclass[11pt]{amsart}

 \usepackage[utf8]{inputenc}
 \usepackage[T1]{fontenc}

 \usepackage[english]{babel}

 \usepackage{amsmath,amsfonts,amssymb}
 \usepackage{mathrsfs}
 \usepackage{dsfont} 
 \usepackage{mathtools} 
 \usepackage{esint} 
 \usepackage{xfrac} 
 \usepackage{defs} 
 \usepackage{fonts} 

 \usepackage{amsthm}
 \usepackage[shortlabels]{enumitem} 
 \usepackage[colorlinks=true,urlcolor=blue, citecolor=red,linkcolor=blue,linktocpage,pdfpagelabels, bookmarksnumbered,bookmarksopen]{hyperref} 
 \usepackage[paper=a4paper,hmargin=1in,vmargin=1in, marginparwidth=0.8in]{geometry} 

 \usepackage[colorinlistoftodos,prependcaption,linecolor=blue,backgroundcolor=blue!25,bordercolor=blue,textsize=tiny]{todonotes}
 \usepackage{natbib}
 \usepackage{cleveref}
 \usepackage{comment}

 \usepackage{tikz}
 \usetikzlibrary{arrows.meta,calc,decorations.pathreplacing}

plus 6pt minus 12pt
\numberwithin{equation}{section}

\newtheorem{theorem}{Theorem}[section]
\newtheorem{proposition}[theorem]{Proposition}
\newtheorem{lemma}[theorem]{Lemma}
\newtheorem{corollary}[theorem]{Corollary}
\theoremstyle{remark}
\newtheorem{remark}[theorem]{Remark}

\theoremstyle{definition}

\crefname{theorem}{Theorem}{Theorems}
\crefname{proposition}{Proposition}{Propositions}
\crefname{lemma}{Lemma}{Lemmas}
\crefname{corollary}{Corollary}{Corollaries}

\title[Nonlocal geometric rigidity inequalities]{Nonlocal geometric rigidity inequalities and the justification of linearized peridynamics}
 \author{James M. Scott}
 \address[James M. Scott]{Department of Mathematics and Statistics,
 Auburn University,
 305 W Samford Avenue,
Auburn University, AL 36849, USA}
\subjclass[2020]{Primary: 74A70, 46E35; Secondary: 74B20, }
\keywords{geometric rigidity, nonlocal functionals, $\Gamma$-convergence, systems of nonlocal equations, linearized elasticity, peridynamics}

\begin{document}

\begin{abstract}

We prove geometric rigidity inequalities for vector fields using double-integral Gagliardo-type quantities. 
We show that the nonlocal-norm distance of the difference quotient of a vector field from the difference quotient of a single rigid motion is controlled by the same nonlocal norm of a difference characterizing the distance to the manifold of all rigid motions.
The inequality is then used to show the variational convergence of a sequence of rescaled nonlocal fully nonconvex integral functionals to a linearized energy functional. 
Related functionals arise in peridynamic formulations of continuum mechanics via the total strain energy; this variational convergence provides a rigorous justification of linearized peridynamics with volume constraint via the convergence of rescaled displacements.
We additionally establish analogous rigidity inequalities in the linearized setting, which can be thought of as nonlocal analogues of Korn's inequality for displacements.
\end{abstract}

\maketitle

\section{Introduction}
For any domain $\Omega \subset \bbR^n$ and any vector field $v : \Omega \to \bbR^n$
\begin{equation}\label{eq:Intro:Isometry}
	|v(x)-v(y)| = |x-y| \quad \forall x,y \in \Omega \qquad \text{ if and only if } \qquad v(x) = Ax + b \text{ in } \Omega,
\end{equation}
for a constant $n \times n$ matrix $A$ and constant vector $b \in \bbR^n$. The matrix $A$ is a rotation, i.e. it satisfies $A^T A = I$, where $A^T$ is the transpose of $A$ and $I$ is the identity matrix. The set of all such maps $Ax+b$ with $A^T A = I$ are the \textit{rigid motions} of $\bbR^n$, a class denoted here by $\scR$.
The quantity 
\begin{equation*}
	\rme[v](x,y) := \frac{|v(x)-v(y)|}{|x-y|} - 1
\end{equation*}
is therefore a measure of how $v$ stretches or shrinks the vector $x-y$. If $\rme[v](x,y) = 0$, then the lengths $x-y$ are unchanged by the deformation of $\Omega$ under the map $v$.

In this work we show a quantitative analogue of \eqref{eq:Intro:Isometry}; that is, if $e[v]$ is close to $0$ then $v$ is nearly a rigid motion. The notions ``close to'' and ``nearly'' are characterized via Lebesgue spaces $L^p(\Omega;\bbR^n)$ and nonlocal Sobolev-Slobodeckij-type spaces.
For $1 \leq p < \infty$ and a nonnegative weight function $\gamma$ satisfying certain growth conditions, consider the quantity
\begin{equation}\label{eq:Intro:Energy}
	E(v) = \int_{\Omega} \int_{\Omega} \gamma(x-y) |\rme[v](x,y)|^p \, \rmd y \, \rmd x.
\end{equation} 
We prove the following inequality: there exists a universal constant $C$ such that for all vector fields $v$ with $\rme[v]$ belonging to a weighted subspace of $L^p(\Omega \times \Omega)$
\begin{equation}\label{eq:Intro:GeometricRigidity}
	\inf_{\psi \in \scR} \int_\Omega \int_{\Omega} \gamma(x-y) \left| \frac{v(x) - v(y)}{|x-y|} - \frac{\psi(x) - \psi(y)}{|x-y|} \right|^p  \, \rmd y \, \rmd x \leq C E(v).
\end{equation}
The result is stated precisely in \cref{thm:Intro:GeometricRigidity} below.
The quantity on the left-hand side is the Gagliardo seminorm characterizing a Banach space.
However, the right-hand side 
is not a seminorm. In fact, the set of measurable functions for which the right-hand side vanishes is precisely $\scR$ \cite{Mengesha2024Linearization}.

Quantities similar to \eqref{eq:Intro:Energy} are used to measure elastic material response in a nonlocal continuum mechanics model known as peridynamics \cite{Silling2000Reformulation,Silling2010Linearized,Silling2007Peridynamic}. In this model, the vector $x-y$ represents the bond between two material points in the elastic body $\Omega$, and likewise $v(x) - v(y)$ represents said bond subjected to a deformation $v : \Omega \to \bbR^n$. The stretch or shrink of the bond is given by the Euclidean norm of the difference quotient 
$$
D[v](x,y) := \frac{v(x)-v(y)}{|x-y|}
$$ 
and the deviation of the bond from elastic equilibrium is given by the \textit{microstrain} $\rme[v](x,y) = |D[v](x,y)| - 1$ and defined above.
Functionals describing the total stored potential energy for elastic homogeneous isotropic bond-based peridynamic materials are written in terms of $\rme[v]$; one archetype is
\begin{equation}\label{eq:Intro:PeriEnergy}
	\int_\Omega \int_\Omega \gamma(x-y) W(\rme[v](x,y)) \, \rmd y \, \rmd x,
\end{equation}
where $W : [-1,\infty) \to [0,\infty)$ is a pairwise microelastic potential. The strain energy density contains the nonnegative integrable weight $\gamma$, also known as the kernel function, which characterizes the strength and extent of material bonds $x-y$.
We assume that $W$ satisfies conditions natural to physical stress-free models, namely
\begin{equation}\label{eq:AssumptionsOnW:1}
	W(r) \geq 0, \text{ and } W(r) = 0 \text{ if and only if } r = 0. 
\end{equation}
Therefore the total strain energy is $0$ if and only if the deformation $v$ belongs to $\scR$. Therefore, we expect that, for small total strain energy (even if the strain itself is unbounded), the deformation is close to a rigid motion; results such as \eqref{eq:Intro:GeometricRigidity} help quantify such statements precisely.

Central to the proof of \eqref{eq:Intro:GeometricRigidity} / \Cref{thm:Intro:GeometricRigidity} is a geometric statement: suppose that a collection of points ``nearly'' forms an orthonormal frame and suppose their mutual distances are changed by the deformation $v$ on the order of $|\rme[v]|$. Then there exists a rigid motion that is close to the deformation within a neighborhood of the origin (See \cite[Proposition 3.13]{Kohn1982New} for the precise statement, reproduced in \Cref{lma:GeometricLemma} below). This ``on average'' result is then integrated to obtain an estimate on Euclidean balls, which are then used to localize estimates on $\Omega$. The localization incorporates all scales simultaneously via summation, and fortunately the singularity conditions on the nonlocal kernel $\gamma$ ensures the weighted summation that appears after incorporating $\gamma$ is finite.

\Cref{thm:Intro:GeometricRigidity} is then applied to obtain the convergence of minimizers of rescalings of the energies \eqref{eq:Intro:PeriEnergy} to a linearized (i.e. convex) energy. A common feature of elastic energy functionals such as \eqref{eq:Intro:PeriEnergy} is their global nonconvexity in the deformation. That is, $v \mapsto W(\rme[v])$ is nonconvex as a function of $|D[v]|$, even if $W$ is convex. The difficulty inherent in working with nonconvex energies makes linearizing the models highly desirable in practice, but challenging to justify in physical contexts.

To illustrate such challenges, consider the following formal derivation of the linearized peridynamics model:
It is expected that small external loads result in small displacements. 
We write the deformation as $v(x) = x + u(x)$, where $u$ is the displacement field; $u = 0$ corresponds to the undeformed configuration, with corresponding deformation $\mathrm{id}(x) = x + 0 = x$.
We consider a loading field $\ell(x;\veps)$ smoothly varying in the (adimensional) parameter $\veps > 0$ with the property that $\ell(x;\veps) = \veps \ell(x) + O(\veps^2)$ for a fixed external loading force $\ell : \Omega \to \bbR^n$.
The formal expectation is that the resulting displacement field $u(x;\veps)$ behaves similarly: $u(x;\veps) = \veps u(x) + O(\veps^2)$ for a fixed vector field $u$.
The corresponding deformation is $v(x;\veps) = x + \veps u + O(\veps^2),$ and the first-order approximation of the associated microstrain is $\rme [ v(\cdot;\veps) ](x,y) = \left| D [\mathrm{id}](x,y) + \veps D[u](x,y) \right| -1 $. 
The equilibrium configurations are then stationary points of the total peridynamic elastic energy 
\begin{equation}\label{eq:Intro:SmallDisplacementEnergy}
	\int_\Omega \int_\Omega \gamma(x-y) W(\rme[\mathrm{id}+\veps u](x,y)) \, \rmd y \, \rmd x - \veps^2 \int_\Omega \ell(x) \cdot u(x) \, \rmd x.
\end{equation}
Writing $|D[\mathrm{id}](x,y) + \veps D[u](x,y)| -1
= \frac{|D[\mathrm{id}](x,y) + \veps D[u](x,y)|^2 -1}{|D[\mathrm{id}](x,y) + \veps D[u](x,y)| + 1}$ and expanding, 
\begin{equation}\label{eq:LinearizedStrain}
	\begin{split}
		&\rme[\mathrm{id} + \veps u](x,y) \\
		=& \veps D[u](x,y) \cdot D[\mathrm{id}](x,y) \\
		&\quad + \frac{\veps^2 |D[u](x,y)|^2 + \veps D[u](x,y) \cdot D[\mathrm{id}](x,y)(1 -|D[\mathrm{id}](x,y) + \veps D[u](x,y)|) }{|D[\mathrm{id}](x,y) + \veps D[u](x,y)| + 1}\\
		=& \veps D[u](x,y) \cdot D[\mathrm{id}](x,y) + O(\veps^{2} |D[u](x,y)|^2)\,.
	\end{split}
\end{equation}
We assume that
\begin{equation}\label{eq:AssumptionsOnW:2}
	\begin{gathered}
		\text{There exists a neighborhood of } r=0 
		\text{ on which } W(r) \text{ is } C^2, \text{ with }
		W''(0) := \mu > 0.
	\end{gathered}
\end{equation}
Now, if the vector field $u$ is assumed to be Lipschitz, then Taylor expanding $W$ and using \eqref{eq:AssumptionsOnW:1}, \eqref{eq:AssumptionsOnW:2} and \eqref{eq:LinearizedStrain} gives
\begin{equation}\label{eq:LinearizedStrainPotential}
	W(\rme[\mathrm{id}+\veps u](x,y)) = \veps^2 \frac{\mu}{2} (D[u](x,y) \cdot D[\mathrm{id}](x,y))^{2} + o(\veps^2)\,.
\end{equation}
Then dividing the total energy in \eqref{eq:Intro:SmallDisplacementEnergy} by $\veps^2$ and taking $\veps \to 0$, we obtain the functional 
\begin{equation}\label{eq:Intro:LinearizedEnergy}
	\begin{split}
		\frac{\mu}{2} \int_\Omega \int_\Omega \gamma(x-y) ( D[u](x,y) \cdot D[i](x,y) )^2 \, \rmd y \, \rmd x - \int_\Omega \ell \cdot u \, \rmd x.
	\end{split}
\end{equation}
This expression is usually taken as the total energy in linearized bond-based peridynamics models.  
The quantity $D[u](x,y) \cdot D[\mathrm{id}](x,y)$ can thus be viewed as a nonlocal analogue of linearized microstrain.

The convergence of the re-scaled sequence of potential energies is under the assumption that the field $u$ is Lipschitz, which excludes vector fields that may have discontinuities or large strain $\rme[v]$. In \cite[page 222]{Silling2010Crack}, it is argued that weakening the condition for linearization is advantageous because it ``permits us to consider perturbations containing jump discontinuities in ${u}$.'' Indeed, the ability to model discontinuous deformation fields is a central attractive feature of peridynamics models.
In this regard, our first main result justifies the linearized bond-based peridynamics model for vector fields that make \eqref{eq:Intro:SmallDisplacementEnergy} finite, which is a class of vector fields more general than Lipschitz. In particular, the microstrain $\rme[v]$ is allowed to be unbounded. Moreover, depending on the singularity of $\gamma$, this class may contain discontinuous vector fields. We will state the result precisely in the next section but broadly speaking we will prove that minimizers (subject to some volumetric conditions) of the functionals with rescaled kernel $\rho$ 
\begin{equation}\label{nl-total-energy}
	\frac{1}{\veps^2 } \int_\Omega \int_\Omega |x-y|^2 \rho(x-y) W(\rme[\mathrm{id}+\veps u](x,y)) \, \rmd y \, \rmd x - \int_\Omega \ell(x) \cdot u(x) \, \rmd x
\end{equation}
converge in the appropriate topology to the minimizer of
\begin{equation}\label{quad-energy}
	\frac{\mu}{2} \int_\Omega \int_\Omega |x-y|^2 \rho(x-y) ( D[u](x,y) \cdot D[\mathrm{id}](x,y) )^2 \, \rmd y \, \rmd x - \int_\Omega \ell(x) \cdot u(x) \, \rmd x.
\end{equation}

We use the framework of $\Gamma$-convergence.
A similar $\Gamma$-convergence result was obtained by T. Mengesha and the author in \cite{Mengesha2024Linearization}. However, compactness of a finite-energy sequence, and thus the convergence of minimizers, remained out of reach. Here, we provide a complete story; the minimizers of the nonlinear energy converge to the minimizer of the linearized energy. 
The $\Gamma$-limit is calculated in this work with the presence of constraints not present in \cite{Mengesha2024Linearization}, as well as distinct assumptions on $W$, so the corresponding proofs take a different route.

\section{Setup and main results}
We denote the Lebesgue measure in dimension $n$ by $\cL^n$. All results of the paper hold for $n \geq 1$ unless indicated otherwise. Euclidean balls are denoted $B(x_0,r)$, with center $x_0$ and/or radius $r$ suppressed if clear from context. For a measurable set $A$, $\mathds{1}_A$ denotes its indicator function, which is $1$ on $A$ and $0$ otherwise.
The set of $n \times n$ matrices is associated with $\bbR^{n \times n}$. We denote the unit sphere in $n$ dimensions as $\bbS^{n-1} \subset \bbR^n$. 
The reference configuration $\Omega \subset \bbR^n$ is assumed to be a bounded Lipschitz domain. Lebesgue and fractional/classical Sobolev spaces are denoted $L^p$ and $W^{s,p}$ respectively.

The nonlocal Banach space is defined using the kernel function $\rho$ as
\begin{equation*}
	\cS^{\rho,p}(\Omega;\bbR^n) := \left\{ v \in L^p(\Omega;\bbR^n) \, : \, [v]_{\cS^{\rho,p}(\Omega)}^p := \int_\Omega \int_\Omega \rho(x-y) |v(x)-v(y)|^p \, \rmd y \, \rmd x < \infty  \right\},
\end{equation*}
with norm $\vnorm{u}_{\cS^{\rho,p}(\Omega)}^p := \vnorm{u}_{L^p(\Omega)}^p + [v]_{\cS^{\rho,p}(\Omega)}^p$.
The kernel $\rho$ defining the space $\cS^{\rho,p}$ satisfies the following assumptions:
\begin{equation}\label{assu:Kernel}
	\begin{gathered}
		|z|^p \rho(z) \in L^1_{loc}(\bbR^n), \quad  \rho \geq 0, \quad \rho \text{ is radial, } \quad \text{ and } \rho(r) \text{ is nonincreasing for } r \in (0,\infty).  \tag{K}
	\end{gathered}
\end{equation}

We state our first main result precisely in the following theorem:

\begin{theorem}\label{thm:Intro:GeometricRigidity}
	Let $p \in [1,\infty)$. Suppose $\rho$ satisfies \eqref{assu:Kernel} and the following singularity condition:
	\begin{equation}\label{assu:GeomRigid}
		\text{There exists } r_0 > 0 \text{ and } q \in (0,2^{n+p}) \text{ such that } \rho(r/2) \leq q \rho(r) \quad \forall r \in (0,r_0). \tag{$\rmK_{\rmG \rmR}$}
	\end{equation}
	Then there exists a constant $C>0$ depending only on $n$, $p$, $\rho$, and $\Omega$ such that the following holds: for every $v \in \cS^{\rho,p}(\Omega;\bbR^n)$ there exists $\psi \in \scR$ such that
	\begin{equation}\label{eq:GeometricRigidity}
		\int_\Omega \int_{\Omega} \rho(x-y) |x-y|^p |D[v-\psi](x,y)|^p  \, \rmd y \, \rmd x \leq C \int_{\Omega} \int_{\Omega} \rho(x-y) |x-y|^p |\rme[v](x,y)|^p \, \rmd y \, \rmd x.
	\end{equation}
\end{theorem}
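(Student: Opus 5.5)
The plan is to follow the strategy announced in the introduction: first a pointwise geometric estimate on small configurations, then a ball estimate, then a multiscale localization that is summed against the weight $\rho(x-y)|x-y|^p$, and finally a covering of $\Omega$. \textbf{Step 1 (geometric lemma on balls).} We invoke \Cref{lma:GeometricLemma} (Kohn's Proposition 3.13). It says that if points $x_0,x_1,\dots,x_n$ nearly form an orthonormal frame scaled by $r$, then there is $\psi\in\scR$ such that the discrepancy $|v(x_i)-\psi(x_i)|$ is controlled by $r\sum_{i\neq j}|\rme[v](x_i,x_j)|$. We average this over all admissible frames with vertices in a ball $B=B(x_0,r)$; such frames form a set of positive measure in $B^{n+1}$. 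Using the triangle inequality for differences, this gives a rigid motion $\psi_B$ with
\begin{equation*}
	\fint_B\fint_B |v(x)-v(y)-(\psi_B(x)-\psi_B(y))|^p\,\rmd y\,\rmd x \le C r^p \fint_B\fint_B |\rme[v](x,y)|^p \,\mathds{1}_{\{|x-y|\ge c r\}}\,\rmd y\,\rmd x .
\end{equation*}
Only pairs at distance comparable to $r$ enter the right-hand side. This is why the weight $\rho(r)r^p$ can be attached to both sides at scale $r$.

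\textbf{Step 2 (multiscale comparison of rigid motions).} Fix a dyadic family of scales $r_k=2^{-k}r_0$, and for each $x$ and each $k$ the ball $B(x,r_k)$ with its motion $\psi_{x,k}$.
\begin{itemize}
	\item For nested balls $B(x,r_{k+1})\subset B(x,r_k)$, the two motions differ on the smaller ball by at most the sum of the two right-hand sides. Since rigid motions are affine, this controls the difference of their linear parts, $|A_{x,k}-A_{x,k+1}|^p$.
	\item For a pair $(x,y)$ with $|x-y|\sim r_k$, I write $v(x)-v(y)-(\psi(x)-\psi(y))$, where $\psi$ is a single global motion taken to be the one at the coarsest scale on a fixed ball. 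This telescopes through the chain of motions from scale $r_k$ up to $r_0$.
	\item Raising to the $p$th power and multiplying by $\rho(r_k)r_k^p\cdot r_k^n$ (the measure of pairs at that scale), we sum over $k$.
\end{itemize}

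\textbf{Main obstacle.} The hardest step is showing that the resulting double sum converges with a bound by $E(v)$. The coarse-scale errors appear in the bound for every finer scale $k$ with weight $\rho(r_k)r_k^{p+n}$. After exchanging the order of summation, the coefficient of the energy at scale $j$ is $\sum_{k\ge j}\rho(r_k)r_k^{n+p}/(\rho(r_j)r_j^{n+p})$. By \eqref{assu:GeomRigid} we have $\rho(r_k)\le q^{k-j}\rho(r_j)$, so this sum is at most $\sum_m (q2^{-n-p})^m<\infty$ exactly because $q<2^{n+p}$. The term $|A_{x,k}-A_{x,k+1}|$ requires care: its contribution to $|x-y|^p$-weighted differences is first order, and I expect that keeping track of powers will need the $p$th-power triangle inequality with geometric weights (a discrete Hardy inequality). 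This is the step I would work through first.

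\textbf{Step 3 (global patching).} Beyond scale $r_0$ the kernel is bounded, since $\rho$ is nonincreasing. Cover $\Omega$, a bounded Lipschitz domain, by finitely many balls, or cones near the boundary, that are chained together. Adjacent rigid motions are compared on the overlaps using Step 1 at scale $r_0$; this gives a single $\psi$ whose far-range contribution is bounded by $C\,E(v)$, with $C$ depending on $\Omega$. Near the boundary, the frames of Step 1 must lie in $\Omega$. I would use the interior cone condition to choose frames inside $\Omega\cap B$ with measure comparable to $r^{n(n+1)}$.
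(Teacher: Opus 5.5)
Your architecture matches the paper's: a local estimate on balls with an annular right-hand side, dyadic telescoping anchored at one coarse motion, geometric summation using $q<2^{n+p}$, and chaining for the long range. Comparing linear parts $A_{x,k}$ along concentric balls is a reasonable substitute for the paper's partition-of-unity interpolants $v_j$. The genuine gap is Step 1, on which everything else rests.

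The frame estimate you quote controls $v$ only at the vertices of one frame, and with a rigid motion that depends on that frame. Averaging it over frames does not by itself produce a single $\psi_B$. No triangle inequality transfers control from the vertices to other points either, because $\rme[v]$ records lengths, not directions. The missing idea is trilateration, which the paper uses in three steps.
\begin{enumerate}
\item Chebyshev's inequality selects \emph{one} frame that is good both for the vertex-to-vertex errors and for the integrated vertex-to-point errors $\int_{B''}\big(|v(x_i)-v(y)|-|x_i-y|\big)_+^p\,\rmd y$ (these are the sets $\cM^\lambda_{ij}$ and $\cN^\lambda_i$).
\item The second assertion of \cref{lma:GeometricLemma} then bounds $|v(z)-\psi(z)|$ for every $z$ in a central ball.
\item To reach all of $\Omega\cap B(x_0,r)$, including near $\partial\Omega$, the paper applies the quantitative trilateration estimate \cref{lma:KeyElementaryEstimate} to $\psi^{-1}\circ v$. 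This reintroduces pairs at arbitrarily short distance, which are then absorbed as in \cref{lma:LpEst:Annulus}.
\end{enumerate}
Only after all this do you have the annular right-hand side that you take for granted.

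The summation step you flagged also needs an actual argument. Telescoping gives $\cA_{r_k}(v-\psi;\Omega)^{1/p}\lesssim\sum_{j\le k}\big(2^{-(k-j)(n+p)}\cE_{r_j}(v;\Omega)\big)^{1/p}$. Your coefficient $\sum_{k\ge j}\rho(r_k)r_k^{n+p}/(\rho(r_j)r_j^{n+p})$ is therefore right only for $p=1$. For $p>1$, fix $\beta$ with $q<2^\beta<2^{n+p}$ and apply H\"older with weights $2^{j(n+p-\beta)/p}$ to get $\cA_{r_k}\lesssim\sum_{j\le k}2^{-\beta(k-j)}\cE_{r_j}$. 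The coefficient then becomes $\sum_m(q2^{-\beta})^m$, and this is where the strictness of $q<2^{n+p}$ is spent.

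Finally, index the local motions by a bounded-overlap discrete cover (as the paper's $\cG_k$ do), or average over centers $z$ near $x$. The value of $v-\psi_{x,k}$ at the center $x$ itself is not controlled by an $L^p$ bound on $B(x,r_k)$, and the map $x\mapsto\psi_{x,k}$ raises measurability issues.
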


The condition \eqref{assu:GeomRigid} requires that $\rho$ not be too singular at $r = 0$.
The class of kernels satisfying both \eqref{assu:Kernel} and \eqref{assu:GeomRigid} include $\rho(z) = |z|^{-n-sp}$ for any $p \in [1,\infty)$ and any $s \in (0,1)$, and $\rho(z) = \eta^{-n-p} \mathds{1}_{ \{|z| < \eta \} }$ for any $\eta > 0$.
The requirement on the constant $q$ is sharp in the sense that there exists $\rho$ satisfying \eqref{assu:Kernel} with $\rho(r/2) < 2^{n+p} \rho(r)$ for $r \in (0,1)$ such that \eqref{eq:GeometricRigidity} fails. This will be shown in \cref{subsec:Counterexamples}.

We will consider minimizers of \eqref{eq:Intro:SmallDisplacementEnergy} subject to a volume constraint, which is a nonlocal analogue of Dirichlet boundary conditions. Given another Lipschitz domain $\omega \subset \Omega$ such that $\Omega \setminus \overline{\omega}$ remains a Lipschitz domain, and for any $g \in \cS^{\rho,p}(\Omega;\bbR^n)$ we introduce the set
\begin{equation*}
	\cS^{\rho,p}_g := \{ u \in \cS^{\rho,p}(\Omega;\bbR^n) \, : \, u = g \text{ a.e. on } \omega \}.
\end{equation*}
Throughout this work, $W : [-1,\infty) \to [0,\infty)$ is taken to be an $\cL$-measurable function satisfying \eqref{eq:AssumptionsOnW:1}, \eqref{eq:AssumptionsOnW:2} and
\begin{equation}\label{eq:AssumptionsOnW:3}
	\text{There exists a constant } c_0 > 0 \text{ such that } W(r) \geq c_0 r^2 \text{ for all } r \in [-1,\infty).
\end{equation}
Note that $W$ may have more than quadratic growth in general.

The external load is modeled by a continuous linear functional $\scL : L^2(\Omega;\bbR^n) \to \bbR$, i.e. $\scL(v) = \int_\Omega \ell \cdot v$ for some $\ell \in L^2(\Omega;\bbR^n)$. Then the deformation $v$ corresponding to the load $\veps \scL$ has the form $v(x) = x + \veps u(x)$ and minimizes the rescaled energy
\begin{equation}\label{eq:NonlinearEnergy}
	\cF_\veps(u) := \frac{1}{\veps^2} \int_\Omega \int_\Omega \rho(x-y) |x-y|^2 W(\rme[\mathrm{id}+\veps u](x,y)) \, \rmd y \, \rmd x - \scL(u).
\end{equation}
We assume the volume constraint condition on the displacement $u$ so that $v(x) = x + \veps g(x)$ almost everywhere on $\omega$.
The following theorem describes the behavior of minimizers as $\veps \to 0$, and is the second main result of the paper.
\begin{theorem}\label{thm:VariationalConvergence}
	Let $W$ be as above, and let $g \in W^{1,\infty}(\Omega;\bbR^n)$. Let $\rho$ satisfy \eqref{assu:Kernel}, \eqref{assu:GeomRigid}, and the condition \eqref{assu:Compact} specified below. For $\veps > 0$ define
	\begin{equation*}
		m_\veps := \inf_{u \in \cS^{\rho,2}_g} \cF_\veps(u)
	\end{equation*}
	and let $\{ u_\veps \}_\veps \subset \cS^{\rho,2}_g$ be a sequence satisfying
	\begin{equation}\label{eq:AlmostMins}
		\cF_\veps(u_\veps) = m_\veps + o(1).
	\end{equation}
	Then $\{ u_\veps \}$ converges strongly in $L^2(\Omega;\bbR^n)$ to the unique solution of 
	\begin{equation*}
		m := \min_{ u \in \cS^{\rho,2}_g } \cF(u),
	\end{equation*}
	where 
	\begin{equation}\label{eq:LinearizedEnergy}
		\cF(u) := \frac{\mu}{2} \int_\Omega \int_\Omega \rho(x-y) |x-y|^2 (D[u](x,y) \cdot D[\mathrm{id}](x,y))^2 \, \rmd y \, \rmd x - \scL(u).
	\end{equation}
	Moreover, $m_\veps \to m$.
\end{theorem}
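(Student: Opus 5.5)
The proof follows the standard route for linearization via $\Gamma$-convergence: equicoercivity, a $\Gamma$-liminf inequality, a recovery sequence, and then the fundamental theorem of $\Gamma$-convergence, together with uniqueness of the limit minimizer. Throughout, $v_\veps := \mathrm{id}+\veps u_\veps$ and $p=2$.

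\textbf{Step 1: a priori bounds and compactness.}
Test $m_\veps$ with $u=g$. Since $g$ is Lipschitz, $|\rme[\mathrm{id}+\veps g]|\le \veps \,\mathrm{Lip}(g)$. By \eqref{eq:AssumptionsOnW:2}, $W(r)\le C r^2$ near $0$. Because $|z|^2\rho\in L^1_{loc}$, this gives $\sup_\veps \cF_\veps(g)<\infty$, and hence $\cF_\veps(u_\veps)\le C$.

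The growth condition \eqref{eq:AssumptionsOnW:3} then gives
\[
\int_\Omega\int_\Omega \rho|x-y|^2|\rme[v_\veps]|^2\le C\veps^2\bigl(1+\|u_\veps\|_{L^2}\bigr).
\]
Apply \cref{thm:Intro:GeometricRigidity} to $v_\veps$ to obtain $\psi_\veps=R_\veps x+b_\veps$ with
\[
\iint \rho\,|v_\veps-\psi_\veps(x)-(v_\veps-\psi_\veps)(y)|^2\le C\veps^2\bigl(1+\|u_\veps\|\bigr).
\]
Next use the volume constraint: on $\omega$ we have $v_\veps-\psi_\veps=(I-R_\veps)x-b_\veps+\veps g$. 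A nonlocal Poincaré-type estimate on $\omega$, using only the $\omega\times\omega$ part of the double integral together with $\rho\not\equiv0$ near $0$, shows that an affine map with small seminorm on $\omega$ is small. This yields $|I-R_\veps|+|b_\veps|\le C\veps(1+\|u_\veps\|)^{1/2}$.

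Now apply the nonlocal Poincaré inequality on $\Omega$ for functions vanishing on $\omega$. I expect this is where condition \eqref{assu:Compact} enters, as a compactness condition of the type $\cS^{\rho,2}\Subset L^2$, e.g. non-integrability of $\rho$. Writing $u_\veps=(v_\veps-\psi_\veps)/\veps+\bigl((R_\veps-I)x+b_\veps\bigr)/\veps$, one gets $\|u_\veps\|_{\cS^{\rho,2}}^2\le C(1+\|u_\veps\|)$, so $u_\veps$ is bounded in $\cS^{\rho,2}$. Moreover $(R_\veps-I)/\veps$ has a subsequential limit $A$. The relation $R^TR=I$ forces $A$ to be skew, and then $D[Ax]\cdot D[\mathrm{id}]=0$. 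By \eqref{assu:Compact}, a subsequence converges strongly in $L^2$ to some $u\in\cS^{\rho,2}_g$.

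\textbf{Step 2: $\Gamma$-liminf.}
Set $w_\veps:=\rme[v_\veps]/\veps$, which is bounded in $L^2(\rho|x-y|^2)$. Using the exact identity \eqref{eq:LinearizedStrain}, we have
\[
w_\veps=D[u_\veps]\cdot D[\mathrm{id}]+\veps\frac{|D u_\veps|^2-(D u_\veps\cdot D\,\mathrm{id})\,w_\veps}{|D v_\veps|+1}.
\]
On the sets where $\veps|D[u_\veps]|\le \veps^{1/2}$, the remainder is $O(\veps^{1/2})(|Du_\veps|+|w_\veps|)$. The complement has weighted measure $O(\veps)$ by Chebyshev's inequality.

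Hence $w_\veps\mathds 1_{G_\veps}$, where $G_\veps$ denotes these good sets, converges weakly in the weighted $L^2$ space to $D[u]\cdot D[\mathrm{id}]$. Here I use that $D[u_\veps]\rightharpoonup D[u]$ weakly, which follows from the $\cS^{\rho,2}$ bound and $L^2$ convergence. On $G_\veps$ we have $|w_\veps|\le C\veps^{-1/2}$, so $\veps w_\veps\to 0$ uniformly there. Taylor expansion of $W$ via \eqref{eq:AssumptionsOnW:2} then gives $W(\veps w_\veps)/\veps^2\ge(\mu/2-o(1))w_\veps^2$ on $G_\veps$. Lower semicontinuity of the weighted $L^2$ norm, together with $W\ge0$, gives $\liminf\cF_\veps(u_\veps)\ge\cF(u)$.

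\textbf{Step 3: recovery sequence.}
For Lipschitz $u\in\cS^{\rho,2}_g$, take the constant sequence $u_\veps=u$. The pointwise expansion \eqref{eq:LinearizedStrainPotential} holds, and it is dominated by $C\rho|x-y|^2\mathrm{Lip}(u)^2$. Dominated convergence therefore gives $\cF_\veps(u)\to\cF(u)$.

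For general $u$, use a density argument. Approximate $u-g$, which vanishes on $\omega$, by Lipschitz functions vanishing on $\omega$ in the $\cS^{\rho,2}$ norm, via mollification after a slight dilation away from $\omega$ using the Lipschitz geometry. Since $\cF$ is continuous in $\cS^{\rho,2}$, a diagonal argument completes this step.

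\textbf{Step 4: conclusion.}
$\cF$ is convex and lsc on $\cS^{\rho,2}_g$. It is coercive by the Korn-type inequality established later in the paper (the linearized rigidity inequality), or alternatively by passing Step 1 to the limit. It is strictly convex because any $u$ with $D[u]\cdot D[\mathrm{id}]=0$ and $u=0$ on $\omega$ is an infinitesimal rigid motion vanishing on $\omega$, hence zero. So the minimizer is unique.

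The fundamental theorem of $\Gamma$-convergence then gives $m_\veps\to m$ and convergence of the whole sequence. I expect the main obstacle to be Step 1: controlling the rotations $R_\veps$ and translations $b_\veps$ through the volume constraint, and closing the bound, which depends linearly on $\|u_\veps\|$, without circularity.
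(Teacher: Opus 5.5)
Your proposal is correct and follows the paper's overall architecture: a rigidity-based energy bound that exploits the volume constraint, compactness from \eqref{assu:Compact}, liminf and recovery-sequence inequalities, and the fundamental theorem of $\Gamma$-convergence. Two sub-steps are done differently.

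In Step 1 the paper invokes the full-norm estimate of \cref{cor:FullNormBd}, which through \cref{thm:GeometricRigidity:Poincare} also controls $\|v_\veps-\psi_\veps\|_{L^2(\Omega)}$. It then compares the norms of the affine map $\mathrm{id}-\psi_\veps$ on $\omega$ and on $\Omega$. This handles rotation and translation together and gives \cref{thm:EnergyBound} under \eqref{assu:Kernel} and \eqref{assu:GeomRigid} alone. You use only the seminorm estimate of \cref{thm:Intro:GeometricRigidity}, read off $|I-R_\veps|$ from the $\omega\times\omega$ part, and close the $L^2$ bound with a volume-constrained Poincar\'e inequality for $u_\veps-g$ obtained from the compact embedding. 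That is legitimate under the present hypotheses. However, it makes the energy bound depend on \eqref{assu:Compact}, which the paper's route avoids; this matters for integrable kernels, where compactness fails.

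There is one small error in that step. A seminorm does not see translations, so your bound on $|b_\veps|$ cannot come from the $\omega\times\omega$ seminorm. Since \cref{thm:Intro:GeometricRigidity} determines $\psi_\veps$ only up to a translation, you may simply take $b_\veps=0$, and your Poincar\'e step makes $b_\veps$ irrelevant anyway.

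In Step 2 you use the exact identity \eqref{eq:LinearizedStrain} and weak lower semicontinuity in $L^2(\Omega\times\Omega;\rho(x-y)|x-y|^2\,\rmd y\,\rmd x)$. The paper instead uses a pointwise quadratic lower bound on $W$ on the truncation sets, together with a.e. convergence and Fatou's lemma. The two are interchangeable, and your argument applies to any bounded-energy sequence, which is what the $\Gamma$-liminf in \cref{lma:Linearization:Liminf} requires.

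Your uniqueness argument via \cref{thm:Korn} (a null direction of the quadratic form that vanishes on $\omega$ must be zero) fills in a point the paper only asserts.
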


\cref{thm:VariationalConvergence} establishes the convergence of ``almost minimizers'' to a minimizer of the linearized problem. The proof will follow from sufficient criteria in the paradigm of $\Gamma$-convergence. These sufficient criteria involve the nonlocal stress functionals
\begin{equation*}
	\cE_\veps(u) := \cF_\veps(u) + \scL(u), \qquad \cE(u) := \cF(u) + \scL(u).
\end{equation*}

\begin{theorem}\label{thm:EnergyBound}
	For $W$ as in \cref{thm:VariationalConvergence} and $\rho$ satisfying \eqref{assu:Kernel} and \eqref{assu:GeomRigid}, there exists a constant $C = C(n,\rho,\Omega,\omega) > 0$ such that for every $g \in \cS^{\rho,2}(\omega;\bbR^n)$ and every sequence $\{ u_\veps \} \subset \cS^{\rho,2}_g$ we have
	\begin{equation*}
		\vnorm{ u_\veps }_{\cS^{\rho,2}(\Omega)}^2 \leq C \left(\cE_\veps(u_\veps) + \vnorm{ g }_{\cS^{\rho,2}(\omega)}^2  \right).
	\end{equation*}
\end{theorem}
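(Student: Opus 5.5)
We apply \cref{thm:Intro:GeometricRigidity} with $p=2$ to the deformation $v_\veps := \mathrm{id} + \veps u_\veps$, and then use the volume constraint on $\omega$ to remove the rigid motion, via a nonlocal Poincaré/Korn-type inequality.

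\textbf{Step 1: rigidity.} By \eqref{eq:AssumptionsOnW:3}, $W(\rme[v_\veps]) \geq c_0 |\rme[v_\veps]|^2$. \Cref{thm:Intro:GeometricRigidity} then gives $\psi_\veps(x) = R_\veps x + b_\veps \in \scR$ with
\begin{equation*}
	\int_\Omega\int_\Omega \rho(x-y)|x-y|^2 |D[\mathrm{id}+\veps u_\veps - \psi_\veps](x,y)|^2 \,\rmd y\,\rmd x \leq \frac{C}{c_0}\,\veps^2\, \cE_\veps(u_\veps).
\end{equation*}
The integrand equals $\rho(x-y)\,|w_\veps(x)-w_\veps(y)|^2$, where $w_\veps := \veps u_\veps + (I-R_\veps)x - b_\veps$. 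Dividing by $\veps^2$ and setting $a_\veps := \veps^{-1}\big((R_\veps - I)x + b_\veps\big)$, an affine map, we obtain
\begin{equation*}
	[u_\veps - a_\veps]_{\cS^{\rho,2}(\Omega)}^2 \leq C\, \cE_\veps(u_\veps).
\end{equation*}

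\textbf{Step 2: control the affine part on $\omega$.} On $\omega$ we have $u_\veps = g$, so
\begin{equation*}
	[g - a_\veps]_{\cS^{\rho,2}(\omega)}^2 \leq C \cE_\veps(u_\veps),
\end{equation*}
and therefore $[a_\veps]_{\cS^{\rho,2}(\omega)} \leq C\big(\cE_\veps^{1/2} + [g]_{\cS^{\rho,2}(\omega)}\big)$. Write $a_\veps(x) = A_\veps x + c_\veps$. Since $\rho \not\equiv 0$ is radial and nonincreasing, it is bounded below on a small ball, which gives
\begin{equation*}
	[a_\veps]^2_{\cS^{\rho,2}(\omega)} = \int_\omega\int_\omega \rho(x-y)|A_\veps(x-y)|^2 \,\rmd y\,\rmd x \geq c(\rho,\omega)\,|A_\veps|^2.
\end{equation*}
This bounds $|A_\veps|$. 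The constant $c_\veps$ is not seen by the seminorm, so we control it instead with the nonlocal Poincaré inequality on $\omega$:
\begin{equation*}
	\|f - \textstyle\fint_\omega f\|_{L^2(\omega)} \leq C[f]_{\cS^{\rho,2}(\omega)},
\end{equation*}
valid for radial nonincreasing nontrivial $\rho$ on a connected Lipschitz domain. Applied to $f = g - a_\veps$, it bounds $\|g - a_\veps - \text{const}\|_{L^2(\omega)}$. Combined with $\|g\|_{L^2(\omega)}$ and the bound on $|A_\veps|$, this controls $|c_\veps|$ up to that constant. A cleaner route is to note that the difference of $g - a_\veps$ from its mean is controlled, and then bound $\|a_\veps\|_{L^2(\omega)}$ directly, since on the finite-dimensional space of affine maps all norms, including $\|\cdot\|_{L^2(\omega)}$, are equivalent. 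Either way,
\begin{equation*}
	\|a_\veps\|_{\cS^{\rho,2}(\Omega)} \leq C\|a_\veps\|_{L^2(\omega)} \leq C\big(\cE_\veps(u_\veps)^{1/2} + \|g\|_{\cS^{\rho,2}(\omega)}\big).
\end{equation*}

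\textbf{Step 3: conclude on $\Omega$.} We have
\begin{equation*}
	[u_\veps]_{\cS^{\rho,2}(\Omega)} \leq [u_\veps - a_\veps]_{\cS^{\rho,2}(\Omega)} + [a_\veps]_{\cS^{\rho,2}(\Omega)},
\end{equation*}
and both terms are now bounded. For the $L^2(\Omega)$ norm, apply the nonlocal Poincaré inequality on $\Omega$ to $u_\veps - a_\veps$. This controls $u_\veps - a_\veps$ minus its mean. The mean is fixed by comparing with $u_\veps - a_\veps = g - a_\veps$ on $\omega$, which has positive measure and whose $L^2(\omega)$ norm is already bounded. Squaring and collecting terms yields the claim.

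\textbf{Main obstacle.} The hardest part is the Poincaré inequality with data on $\omega$, i.e. a bound of the form
\begin{equation*}
	\|f\|_{L^2(\Omega)} \leq C\big([f]_{\cS^{\rho,2}(\Omega)} + \|f\|_{L^2(\omega)}\big).
\end{equation*}
If the paper does not supply it, I would prove it by contradiction and compactness. The compact embedding of $\cS^{\rho,2}$ into $L^2$ needs $\rho \notin L^1$, or else a separate argument for integrable kernels. For integrable kernels I would use a direct averaging estimate,
\begin{equation*}
	|f(x) - \textstyle\fint_{B(x,r)} f|^2 \lesssim \rho(r)^{-1}\cdots,
\end{equation*}
chaining balls across the connected Lipschitz domain $\Omega$ from $\omega$.
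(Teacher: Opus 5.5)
Your overall strategy is sound, but Step 2 as written does not bound the translation part $c_\veps$ of $a_\veps$. The nonlocal Poincar\'e inequality on $\omega$ applied to $f = g - a_\veps$ controls $f - \fint_\omega f = g - \fint_\omega g - A_\veps(x - \bar{x}_\omega)$, where $\bar{x}_\omega$ is the barycenter of $\omega$. That quantity does not involve $c_\veps$ at all, so neither of your two routes gives $\vnorm{a_\veps}_{L^2(\omega)} \leq C\big(\cE_\veps(u_\veps)^{1/2} + \vnorm{g}_{\cS^{\rho,2}(\omega)}\big)$. In fact this bound can fail for $a_\veps$ as you defined it. The conclusion of \cref{thm:Intro:GeometricRigidity} only involves the linear part of $\psi_\veps$, so $b_\veps$ can be replaced by any vector, and nothing prevents $|b_\veps|/\veps$ from being arbitrarily large. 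Step 3 inherits the problem, because it uses that $\vnorm{g - a_\veps}_{L^2(\omega)}$ is already bounded. The repair is to exploit exactly this invariance. Step 1 only bounds $[u_\veps - a_\veps]_{\cS^{\rho,2}(\Omega)}$, so you may redefine $c_\veps$ so that $\fint_\omega (g - a_\veps) = 0$. Then Poincar\'e on $\omega$ gives $\vnorm{g-a_\veps}_{L^2(\omega)} \leq C\,\cE_\veps(u_\veps)^{1/2}$, hence $\vnorm{a_\veps}_{L^2(\omega)} \leq \vnorm{g}_{L^2(\omega)} + C\,\cE_\veps(u_\veps)^{1/2}$. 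The rest of Steps 2--3 then goes through, and the separate bound on $|A_\veps|$ becomes unnecessary.

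With that fix your argument is a valid alternative, but it gets its $L^2$ control from a different source than the paper. The paper invokes \cref{cor:FullNormBd}, which bounds the full norm $\vnorm{v_\veps - \psi_\veps}_{\cS^{\rho,2}(\Omega)}$ for $v_\veps = \mathrm{id} + \veps u_\veps$, not just the seminorm. This works because the rigid Poincar\'e inequality \cref{thm:GeometricRigidity:Poincare} holds with the same $\psi_\veps$. Restricting to $\omega$ then bounds $\vnorm{\mathrm{id} - \psi_\veps}_{\cS^{\rho,2}(\omega)}$, translation included. Norm equivalence on affine maps and the triangle inequality finish the proof, with no scalar Poincar\'e inequality needed. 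Your route uses only the seminorm estimate and pays for it with a linear nonlocal Poincar\'e inequality on $\omega$ and on $\Omega$. That inequality does hold under \eqref{assu:Kernel}, but not via your contradiction--compactness argument, since \eqref{assu:Compact} is not assumed in this theorem. Use the direct argument for every kernel, integrable or not. There is $R>0$ with $\rho \geq \rho(R) > 0$ on $B(0,R)$, so $[f]^2_{\cS^{\rho,2}(\Omega)} \geq \rho(R) \int_\Omega\int_\Omega \mathds{1}_{\{|x-y|<R\}} |f(x)-f(y)|^2 \,\rmd y\,\rmd x$. Chaining averages over a finite cover of the connected set $\Omega$ by sets $\Omega \cap B(x_i,R/4)$ then gives the bound, and the same argument works on $\omega$.
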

This theorem ensures that, for any sequence $\{u_\veps\} \subset \cS^{\rho,2}_g$ such that $\{ \cE_\veps(u_\veps) \}$ is bounded, the sequence is also bounded in the $\cS^{\rho,2}$ norm. Since $\cS^{\rho,2}$ is reflexive, it follows that $\{ u_\veps \}$ has a subsequence that is weakly convergent in $\cS^{\rho,2}$. 
We then invoke a compact embedding result from \cite{Du2023$}, which will imply the strong $L^2$ convergence of the subsequence so long as the kernel $\rho$ satisfies the singularity condition
\begin{equation}\label{assu:Compact}
	\lim\limits_{\delta \to 0} \delta^p \left( \int_{B(0,\delta)} |z|^p \rho(z) \, \rmd z \right)^{-1} = 0 \tag{$\rmK_{\rmC}$}
\end{equation}
for $p = 2$. Further discussion on the conditions \eqref{assu:GeomRigid} and \eqref{assu:Compact} is contained in \cref{subsec:Counterexamples}.

\begin{theorem}\label{thm:GammaConvergence}
	Under the assumptions of \cref{thm:VariationalConvergence}, define the functional $\overline{\cE}_\veps : L^2(\Omega;\bbR^n) \to \overline{\bbR}$ via
	\begin{equation*}
		\overline{\cE}_\veps(u) :=
		\begin{cases}
			\cE_\veps(u), &\quad \text{if } u \in \cS^{\rho,2}_g, \\
			+\infty, &\quad \text{if } L^2(\Omega;\bbR^n) \setminus \cS^{\rho,2}_g,
		\end{cases}
	\end{equation*}
	and define $\overline{\cE} : L^2(\Omega;\bbR^n) \to \overline{\bbR}$ in the same way:
	\begin{equation*}
		\overline{\cE}(u) :=
		\begin{cases}
			\cE(u), &\quad \text{if } u \in \cS^{\rho,2}_g, \\
			+\infty, &\quad \text{if } L^2(\Omega;\bbR^n) \setminus \cS^{\rho,2}_g.
		\end{cases}
	\end{equation*}
	Then the total strain energies $\overline{\cF}_\veps = \overline{\cE}_\veps - \scL$ and  $\overline{\cF} = \overline{\cE} - \scL$ satisfy, for any sequence $\veps \to 0^+$,
	\begin{equation*}
		\Gammalim_{\veps \to 0^+} \overline{\cF}_{\veps}(u) = \overline{\cF}(u)
	\end{equation*}
	in the strong topology on $L^2(\Omega;\bbR^n)$.
\end{theorem}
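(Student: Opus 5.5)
We prove the two halves of $\Gamma$-convergence, the liminf inequality and the existence of recovery sequences, in the strong $L^2$ topology. Since $\scL$ is continuous on $L^2$, it suffices to work with $\overline{\cF}_\veps$ and $\overline{\cF}$ directly. The constraint set $\cS^{\rho,2}_g$ is closed under strong $L^2$ convergence: the condition $u=g$ a.e. on $\omega$ passes to $L^2$ limits, and the seminorm is lower semicontinuous by Fatou. So the constraint only requires that recovery sequences also satisfy it.

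\textbf{Liminf inequality.} Let $u_\veps \to u$ in $L^2$ with $\liminf \overline{\cF}_\veps(u_\veps) < \infty$, and pass to a subsequence realizing the liminf along which $\cF_\veps(u_\veps)$ is bounded. Then $u_\veps \in \cS^{\rho,2}_g$. By \cref{thm:EnergyBound} the sequence is bounded in $\cS^{\rho,2}$, so $u \in \cS^{\rho,2}_g$ by weak lower semicontinuity. Set $\delta_\veps := \rme[\mathrm{id}+\veps u_\veps]/\veps$ and note $\sqrt{\rho}\,|x-y|\,\delta_\veps$ is bounded in $L^2(\Omega\times\Omega)$ by \eqref{eq:AssumptionsOnW:3}. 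From the algebraic identity in \eqref{eq:LinearizedStrain},
\[
\delta_\veps = \frac{2 D[u_\veps]\cdot D[\mathrm{id}] + \veps |D[u_\veps]|^2}{|D[\mathrm{id}]+\veps D[u_\veps]|+1}.
\]
Restricting to the set $A_\veps=\{\veps|D[u_\veps](x,y)|\le \veps^{1/2}\}$, the right side equals $D[u_\veps]\cdot D[\mathrm{id}]+O(\veps^{1/2}|D[u_\veps]|)$. The complement of $A_\veps$ has weighted measure $\int\!\int \rho|x-y|^2\mathds{1}_{A_\veps^c}\le \veps\,[u_\veps]^2_{\cS^{\rho,2}}\to0$. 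Since $D[u_\veps]\cdot D[\mathrm{id}]\rightharpoonup D[u]\cdot D[\mathrm{id}]$ weakly in the weighted $L^2$ (this follows from $L^2$ convergence plus the uniform bound), we get $\mathds{1}_{A_\veps}\delta_\veps \rightharpoonup D[u]\cdot D[\mathrm{id}]$ weakly in $L^2(\rho|x-y|^2)$. On $A_\veps$, $|\veps\delta_\veps|\lesssim \veps^{1/2}$, so \eqref{eq:AssumptionsOnW:2} gives $W(\veps\delta_\veps)/\veps^2 \ge (\mu/2-\eta(\veps))\delta_\veps^2$ with $\eta\to0$. Dropping the complement by $W\ge0$ and using weak lower semicontinuity of the quadratic form yields $\liminf \cF_\veps(u_\veps)\ge \cF(u)$. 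The main obstacle is this step: controlling the set where $\veps D[u_\veps]$ is not small, where neither Taylor expansion nor the relation between $\delta_\veps$ and the linear strain is available. The truncation must be chosen so that the discarded set has vanishing weighted measure while the retained part still converges weakly to the right limit. To check that the truncation does not spoil the weak limit, we test against $\phi\in L^2$ and use $\mathds{1}_{A_\veps}\to1$ in weighted measure together with the uniform $L^2$ bound.

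\textbf{Recovery sequence.} Given $u\in\cS^{\rho,2}_g$, we first treat $u\in W^{1,\infty}$ with $u=g$ on $\omega$, taking $u_\veps=u$. Then $|D[u]|\le \mathrm{Lip}(u)$, so $\delta_\veps\to D[u]\cdot D[\mathrm{id}]$ uniformly and $\veps\delta_\veps$ lies eventually in the $C^2$ neighborhood. By Taylor expansion, $W(\veps\delta_\veps)/\veps^2\to \frac{\mu}{2}(D[u]\cdot D[\mathrm{id}])^2$ uniformly, with integrable domination by $C\rho|x-y|^2$ (integrable by \eqref{assu:Kernel}). Dominated convergence then gives $\cF_\veps(u)\to\cF(u)$.

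For general $u$ we use density: Lipschitz fields agreeing with $g$ on $\omega$ should be dense in $\cS^{\rho,2}_g$ in the $\cS^{\rho,2}$ norm. To see this, write $u=g+w$ with $w=0$ on $\omega$. We approximate $w$ by smooth fields vanishing near $\overline{\omega}$, using the Lipschitz regularity of $\Omega\setminus\overline\omega$, a translation and mollification argument, and the monotonicity of $\rho$ to bound the seminorm of translates. Since $\cF$ is continuous in the $\cS^{\rho,2}$ norm and $g\in W^{1,\infty}$, a diagonal argument, together with the lower semicontinuity of the $\Gamma$-limsup, completes the construction.
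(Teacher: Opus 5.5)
Your proposal is correct and follows the paper's proof essentially step for step. In the liminf you use \cref{thm:EnergyBound} for a uniform $\cS^{\rho,2}$ bound and truncate on the same set $\{|D[u_\veps]|\le\veps^{-1/2}\}$. In the limsup you use the constant sequence for Lipschitz fields, then density of Lipschitz fields equal to $g$ on $\omega$ in $\cS^{\rho,2}_g$, together with lower semicontinuity of the $\Gamma$-limsup.

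The one variation is that your liminf passes to the limit via weak convergence in $L^2(\rho(x-y)|x-y|^2\,\rmd x\,\rmd y)$ and weak lower semicontinuity of the norm, rather than via a.e.\ convergence and Fatou's lemma. This is equally valid, relies on the weighted Chebyshev bound that \cref{thm:EnergyBound} actually provides, and avoids extracting subsequences for a.e.\ convergence. Your opening claim that $\cS^{\rho,2}_g$ is closed under strong $L^2$ convergence is only true under a uniform seminorm bound, but you do have that bound wherever you use the claim.
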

\cref{thm:GammaConvergence}, together with the compactness implied by \cref{thm:EnergyBound} and \eqref{assu:Compact}, ensures the convergence of minimizers. \cref{thm:VariationalConvergence} concerns the behavior of almost-minimizers for very general energies; if we assume that $W$ is additionally lower semicontinuous, then we can actually show via direct methods of the calculus of variations that the functional $\cF_{\veps}$ has a minimizer in $\cS^{\rho,2}_{g}$ for each $\veps$; see \cref{subsec:Existence} for the proof. Uniqueness of the minimizer for $\cF$ follows from the strict convexity of $\cE$.

\begin{remark}
	In the absence of the volume constraint, \cref{thm:EnergyBound} is false, as the following counterexample shows. Suppose $\omega = \emptyset$. Then let $W(r) = r^2$ and let $u_\veps(x) = \frac{2x}{\veps}$. Then $\vnorm{u_\veps}_{\cS^{\rho,2}(\Omega)}^2 \geq C(\Omega,\rho)\veps^{-2} \to \infty$ as $\veps \to 0$, while $\cE_\veps(u_\veps) = 0$ for all $\veps > 0$.
\end{remark}

\begin{remark}
	The condition that $g \in W^{1,\infty}(\Omega;\bbR^n)$ cannot be replaced with any weaker condition in general, unless appropriate upper bounds are satisfied on $W$. To illustrate this, consider the setting $\rho(z) = |z|^{-n-2s}$ for some $s \in (0,1)$, so that $\cS^{\rho,2}(\Omega;\bbR^n)$ coincides with the fractional Sobolev space $W^{s,2}(\Omega;\bbR^n)$. 
	Then the $\Gamma$-convergence result fails in general if $g \in W^{\alpha,2}(\Omega;\bbR^n)$ for any fixed $\alpha \in [s,1)$. Indeed, suppose that $W$ satisfies \eqref{eq:AssumptionsOnW:1}, \eqref{eq:AssumptionsOnW:2} and \eqref{eq:AssumptionsOnW:3}, with the additional condition that there is a $q > 2 \frac{1-s}{1-\alpha}$ such that 
	$$
	W(r) \geq C |r|^q, \qquad \text{ for sufficiently large } |r|.
	$$
	Let $g \in W^{\alpha,2}(\Omega;\bbR^n)$ with $g \notin W^{\alpha,q}(\omega;\bbR^n)$; such a choice is possible since $q > 2$ and since $\omega$ has nonzero $\cL^n$-measure. 
	Now we claim that 
	$$
	\overline{\cF}_\veps(u) = \infty \text{ for all } u \in W^{s,2}_g \text{ and all } \veps > 0.
	$$
	Suppose to the contrary, that $\overline{\cF}_\veps(u) < \infty$ for some $u \in W^{s,2}_g$. Then by the lower bound on $W$ assumed above, $\mathrm{id} + \veps u \in W^{1-\frac{2}{q}(1-s),q}(\Omega;\bbR^n)$, hence $u \in W^{1-\frac{2}{q}(1-s),q}(\Omega;\bbR^n) \subset W^{\alpha,q}(\Omega;\bbR^n)$ by assumption on $q$. But $[g]_{W^{\alpha,q}(\omega)} = [u]_{W^{\alpha,q}(\omega)} \leq [u]_{W^{\alpha,q}(\Omega)} < \infty$, a contradiction and the claim is proved. However, $\overline{\cF}(g) < \infty$ since $g \in W^{\alpha,2}(\Omega;\bbR^n) \subset W^{s,2}(\Omega;\bbR^n)$. Thus $\overline{\cF}_\veps(u)$ cannot possibly $\Gamma$-converge to $\overline{\cF}(u)$.
\end{remark}

We next remark on some straightforward generalizations of our setting. First, energies $\cE_\veps$ defined over more general nonlocal measures of strain can be considered. For example, define the nonlocal pairwise microstrain inspired by Seth-Hill material models via
\begin{equation*}
	\rme_m[v](x,y) = \frac{1}{m} ( |D[v](x,y)|^m - 1 ),
\end{equation*}
for a fixed positive constant $m \geq 1$. Then \cref{thm:Intro:GeometricRigidity} remains true, since $|\rme[v]| \leq m |\rme_m[v]|$ for all $x$ and $y$. Moreover, the linearized functional $\cE$ remains of the same form, although the constant $\mu$ may change. As a result, a version of \cref{thm:VariationalConvergence} can be proved; the extended functionals $\overline{\cE}_\veps$ may in general be defined differently due to the $m$-dependent growth rate of $W(\rme_m[v])$.

Peridynamic formulations for heterogeneous materials can also be considered via more general microelastic potentials $W(x,x-y,\rme[v](x,y))$.
For instance, Lipschitz continuous potentials of the form $W(x-y,\rme[v](x,y))$ were considered in \cite{Mengesha2024Linearization}, with $\Gamma$-convergence of the linearization obtained under a different set of assumptions.
In fact, in the case that $W$ depends only on $\rme[v]$, the $\Gamma$-convergence result of this work generalizes that of \cite{Mengesha2024Linearization}, since we do not assume any kind of continuity of $W$ far away from $0$.
By appropriately modifying the conditions \eqref{eq:AssumptionsOnW:1}, \eqref{eq:AssumptionsOnW:2}, and \eqref{eq:AssumptionsOnW:3} to be ``uniform'' in $x$, variational convergence can also be proved for energies characterized by $W = W(x,x-y,\rme[v])$ with only slight changes to the proofs here. For the sake of illustrating the main ideas of the convergence we treat models for homogeneous materials only.

We can also apply a similar linearization argument directly to the estimate \eqref{eq:GeometricRigidity} to obtain a linearized version of
\cref{thm:Intro:GeometricRigidity}. Similar results have been obtained across different contexts  \cite{Acosta2026Fractional,Harutyunyan2023Fractional,MengeshaConsistency,Scott2019fractional}, and are known as \textit{nonlocal Korn's inequalities}. Just as \cref{thm:Intro:GeometricRigidity} is a quantitative estimate for the nonlocal distance of a vector field $v$ from the manifold $\scR$ in terms of the quantity $\rme[v]$, nonlocal Korn's inequalities estimate the distance of $v$ from the subspace of affine vector fields $\{ Ax + b \, : \, A = - A^T \}$.

\begin{theorem}\label{thm:Korn}
	Under the assumptions of \cref{thm:Intro:GeometricRigidity}, there exists $C = C(n,p,\rho,\Omega) > 0$ such that the following holds: for every $u \in \cS^{\rho,p}(\Omega;\bbR^n)$ there exists an affine vector field $\psi(x) = Ax + b$ with $A = - A^T$ such that
	\begin{equation}\label{eq:Korn}
		\vnorm{u - \psi}_{\cS^{\rho,p}(\Omega)}^p \leq C \int_\Omega \int_\Omega \rho(x-y) |x-y|^p \big| D[u](x,y) \cdot D[\mathrm{id}](x,y)\big|^p \, \rmd y \, \rmd x.
	\end{equation}
\end{theorem}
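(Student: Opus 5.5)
We derive \cref{thm:Korn} from \cref{thm:Intro:GeometricRigidity} by linearization. Fix $u \in \cS^{\rho,p}(\Omega;\bbR^n)$ and, for small $t>0$, apply \eqref{eq:GeometricRigidity} to $v_t := \mathrm{id} + t u$. This gives a rigid motion $\psi_t(x) = R_t x + b_t$ with
\begin{equation*}
	\int_\Omega\int_\Omega \rho(x-y)|x-y|^p |D[\mathrm{id}+tu-\psi_t](x,y)|^p \, \rmd y\, \rmd x \leq C \int_\Omega\int_\Omega \rho(x-y)|x-y|^p |\rme[\mathrm{id}+tu](x,y)|^p \, \rmd y\, \rmd x,
\end{equation*}
where $C$ does not depend on $t$. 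We divide by $t^p$ and send $t\to 0$.

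\textbf{Right-hand side.} Since $|D[\mathrm{id}]|=1$, the elementary inequality $\big||a+b|-1\big| \le |b|$ for $|a|=1$ gives $|\rme[\mathrm{id}+tu]| \le t|D[u]|$. Hence the integrand divided by $t^p$ is dominated by $\rho|x-y|^p|D[u]|^p$, which is integrable because $u \in \cS^{\rho,p}$. By \eqref{eq:LinearizedStrain}, $t^{-1}\rme[\mathrm{id}+tu] \to D[u]\cdot D[\mathrm{id}]$ pointwise where $D[u]$ is finite. Dominated convergence then shows that the right-hand side divided by $t^p$ converges to $C\int\int \rho|x-y|^p|D[u]\cdot D[\mathrm{id}]|^p$.

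\textbf{Left-hand side.} Write $\mathrm{id}+tu-\psi_t = t(u - w_t)$ with $w_t(x) := t^{-1}((R_t-I)x - b_t)$. The left-hand side divided by $t^p$ is $[u-w_t]^p_{\cS^{\rho,p}}$ in the weighted seminorm (note $\rho|x-y|^p|D[\cdot]|^p = \rho|\cdot(x)-\cdot(y)|^p$). Since $u$ has finite seminorm, $[w_t]_{\cS^{\rho,p}}$ is bounded uniformly in $t$. On the finite-dimensional space of affine maps, the seminorm $[Ax]_{\cS^{\rho,p}(\Omega)}$ is a norm on $A$, because $\Omega$ is open and $\rho\not\equiv 0$ near $0$ by monotonicity. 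So $A_t := (R_t - I)/t$ is bounded, and hence $|R_t - I| = O(t)$. Extract $A_{t_k}\to A$. From $R_t^T R_t = I$ we get $A_t + A_t^T = -t A_t^T A_t \to 0$, so $A$ is skew. The translation part only affects $L^p$ and does not enter the seminorm. By Fatou (or continuity, since $A_t\to A$), $[u - Ax]^p_{\cS^{\rho,p}} \le C\int\int\rho|x-y|^p|D[u]\cdot D[\mathrm{id}]|^p$.

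\textbf{Full norm.} It remains to control $\vnorm{u-\psi}_{L^p}$ with $\psi = Ax+b$. Choose $b$ so that $u - Ax - b$ has zero mean on $\Omega$, and apply a nonlocal Poincaré inequality $\vnorm{f - \bar f}_{L^p(\Omega)} \le C[f]_{\cS^{\rho,p}(\Omega)}$. This holds for $\rho$ satisfying \eqref{assu:Kernel}: take a ball $B(0,\delta)$ on which $\rho \ge \rho(\delta)>0$, use the truncated-kernel Poincaré inequality on bounded connected Lipschitz domains, and use that $\rho$ is nonincreasing. The $L^p$ term is then bounded by the seminorm, which gives \eqref{eq:Korn}.

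The main obstacle is the compactness step on the rigid motions: turning the uniform seminorm bound into $|R_t - I| = O(t)$. This requires showing that the affine seminorm is a genuine norm on matrices, and keeping track of the reflections/rotations ambiguity; any orthogonal $R_t$ must lie near $I$ since $A_t$ is bounded. The Poincaré step also needs care if $\rho$ vanishes outside a small ball, where connectedness of $\Omega$ is used.
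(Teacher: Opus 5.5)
Your argument is correct, and it reaches the result by a genuinely different route from the paper.

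\textbf{The paper's route.} The paper first reduces to $u \in C^\infty(\overline{\Omega})$ by density. It then fixes the candidate $\psi = \Pi[u]$ in advance, using a bounded linear projection onto infinitesimal rigid motions, so one may assume $\Pi[u]=0$. Next it applies the full-norm estimate of \cref{cor:FullNormBd} to $\mathrm{id}+\veps u$. It then bounds $\vnorm{\mathrm{id}-\psi_\veps}_{\mathrm{aff}}$ quantitatively, using two facts: $|A_\veps - I| \leq |A_\veps - A_\veps^T|$ for orthogonal $A_\veps$ near $I$, and $\Pi[v_\veps]=0$. Because $\Pi$ is only controlled by $L^p$ norms, this step needs the full $\cS^{\rho,p}$ norm of $v_\veps-\psi_\veps$. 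Dividing by $\veps^p$ then gives the full-norm inequality directly.

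\textbf{Your route.}
\begin{enumerate}
\item You use only the seminorm estimate of \cref{thm:Intro:GeometricRigidity}.
\item The bound $|\rme[\mathrm{id}+tu]| \leq t|D[u]|$ lets dominated convergence handle the right-hand side for every $u \in \cS^{\rho,p}$. So you need neither a density step nor the paper's second-order expansion, which requires smoothness of $u$.
\item You control the rotations through the equivalence of $[Ax]_{\cS^{\rho,p}(\Omega)}$ with $|A|$ together with a compactness extraction.
\item Skewness of the limit follows from $A_t + A_t^T = -tA_t^TA_t$.
\item You recover the $L^p$ part afterwards from a standard truncated-kernel nonlocal Poincar\'e inequality, valid under \eqref{assu:Kernel} and connectedness of $\Omega$.
\end{enumerate}

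\textbf{Comparison.} Your route is more self-contained: it uses no external projection operator, no density theorem, and no Poincar\'e-type geometric rigidity estimate, since translations never enter the seminorm. The paper's route gives an explicit choice $\psi=\Pi[u]$, which depends linearly and continuously on $u$. Your skew matrix $A$ is only a subsequential limit of $(R_t-I)/t$ and is not canonically determined.

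\textbf{Minor slip.} There is a harmless sign error: $\mathrm{id}+tu-\psi_t = t(u-w_t)$ with $w_t(x) = t^{-1}\big((R_t-I)x + b_t\big)$. The translation is irrelevant for the seminorm.
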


Prior to this work inequalities of the form \eqref{eq:Korn} for nonlocal Sobolev-Slobodeckij spaces have been established only in the case of fractional kernels $\rho(z) = |z|^{-n-sp}$ for $s \in (0,1)$. To our knowledge, this is the first result establishing such inequalities for more general kernels.

\subsection{Local and nonlocal hyperelasticity}

Taken together, the assumptions for \cref{thm:Intro:GeometricRigidity} and \cref{thm:VariationalConvergence} are somewhat opposed. In order for both to hold, the kernel $\rho$ must not be too singular, so that \eqref{assu:GeomRigid} holds, while at the same time having enough singularity in order for \eqref{assu:Compact} to hold. 
Therefore the theory described in this paper is ``genuinely nonlocal,'' in that there is no mechanism, formal or rigorous, to recover statements about local objects; this is discussed more below and also in \cref{subsec:Counterexamples}.

On the subject of local rigidity theorems, that is, rigidity statements involving derivatives of vector fields, there have been numerous developments over the past half-century with far-reaching consequences in the field of continuum mechanics. 
In the setting of classical local hyperelasticity, in which elastic energies are characterized by the deformation gradient $\grad v$, a description involving derivatives of smooth unstrained configurations is
\begin{equation*}
	\grad v(x)^T \grad v(x) = I \quad \text{ on } \Omega,
\end{equation*}
Then a statement analogous to \eqref{eq:Intro:Isometry} involving local quantities is the 
Liouville-type theorem
\begin{equation*}
	\begin{gathered}
		\grad v \in \rmO(n) \text{ a.e.}, \\
		\text{and } v, v^{-1} \in W^{1,\infty} 
	\end{gathered}
	\qquad \Rightarrow \qquad v = Ax + b \in \scR,
\end{equation*}
where $\rmO(n) := \{ A \in \bbR^{n \times n} \, : \, A^T A = I \}$ denotes the set of rotations.
This theorem can also be made quantitative in the context of Sobolev spaces; such results were established by Robert V. Kohn in \cite{Kohn1982New}. Among stronger Sobolev embedding-type estimates, he showed that there exists a constant $C > 0$ such that for all $v \in W^{1,\infty}(\Omega;\bbR^n)$ with $v^{-1} \in W^{1,\infty}(v(\Omega);\Omega)$ there exists $\psi \in \scR$ that satisfies
\begin{equation}\label{eq:KohnEstimate}
	\vnorm{u - \psi }_{L^p(\Omega)} \leq C \vnorm{E[\grad v]}_{L^p(\Omega)},
\end{equation}
where $E[\grad v]$ is a function of the eigenvalues of $(\grad v^T \grad v)^{1/2}$ that vanishes only if $\grad v^T \grad v = I$ almost everywhere.
In \cite{John1972Bounds} Fritz John showed stronger Sobolev estimates 
\begin{equation}\label{eq:JohnEstimate}
	\inf_{ \psi \in \scR} \vnorm{ v - \psi }_{W^{1,p}(\Omega)} \leq C(n,p,\Omega) \vnorm{  (\grad v^T \grad v)^{1/2} - I }_{L^p(\Omega)}
\end{equation}
provided that $v$ additionally has uniformly small strain, i.e. for all $C^1$ invertible maps with $\vnorm{ (\grad v^T \grad v)^{1/2} - I }_{L^\infty} < \delta$ for a constructed $\delta(n,p)$. 
Clearly, if \eqref{eq:JohnEstimate} holds then the estimate \eqref{eq:KohnEstimate} follows by an application of the Poincar\'e inequality, but \eqref{eq:KohnEstimate} holds for a larger class of vector fields, without any uniform smallness assumption on the strain.

Both \eqref{eq:KohnEstimate} and \eqref{eq:JohnEstimate} require uniform control over the inverse $v^{-1}$. If this assumption is dropped, then any rigidity estimate will fail in general. Using a convex integration scheme \cite{Mueller2003Convex} one can construct a sequence of Lipschitz vector fields $v_k$ such that
\begin{equation}\label{eq:ConvexInt}
	\vnorm{ (\grad v_k^T \grad v_k)^{1/2} - I }_{L^p(\Omega)} \to 0 \quad \text{ while } \quad \inf_{ \psi \in \scR} \vnorm{ v_k - \psi }_{W^{1,p}(\Omega)} \geq c_0 > 0.
\end{equation}
In fact, this lack of rigidity can be further developed; for any matrix $A$ in the quasiconvex hull of $\rmO(n)$, the affine vector field $Ax$ can be approximated uniformly by a sequence of vector fields $v_k$ which satisfy \eqref{eq:ConvexInt}.
Unlike for \eqref{eq:KohnEstimate} and \eqref{eq:JohnEstimate}, we do not assume any uniform control over the strain $\rme[v]$ or make any invertibility assumption on $v$ at all. Nevertheless, we obtain rigidity in the nonlocal setting; the conclusion of \cref{thm:Intro:GeometricRigidity} is in stark contrast to \eqref{eq:ConvexInt} and does not hold in any localizing regime.

A more fruitful approach to rigidity in local hyperelasticity begins with considering deformations satisfying $\det \grad v > 0$ almost everywhere; not only does this constraint eliminate the unphysical phenomenon of interpenetration of matter, it also restricts unstrained configurations to satisfy $\grad v \in \rmS \rmO(n)$, which denotes the set of rotations with positive determinant. This inclusion is not only rigid, i.e.
\begin{equation*}
	\begin{gathered}
		\grad v \in \rmS \rmO(n) \text{ a.e.}, \\
		\text{and } v \in W^{1,p} 
	\end{gathered}
	\qquad \Rightarrow \qquad v = Ax + b \in \scR \text{ with } A \in \rmS \rmO(n),
\end{equation*}
but it also can be made quantitative. The seminal result of Friesecke, James and M\"uller \cite{Friesecke2002theorem} states that
\begin{equation}\label{eq:FJM}
	\inf_{ \substack{\psi \in \scR \\ \grad \psi \in \rmS \rmO(n)} } \vnorm{ v - \psi }_{W^{1,p}(\Omega)} \leq C(n,p,\Omega) \vnorm{ \dist(\grad v, \rmS \rmO(n)) }_{L^p(\Omega)}, \qquad \forall v \in W^{1,p}(\Omega;\bbR^n).
\end{equation}
Note that no invertibility assumptions are made on $v$ in \eqref{eq:FJM}.
Many hyperelastic energies are coercive with respect to the right-hand side of \eqref{eq:FJM}; the rigidity estimate can be used to obtain the variational convergence of the nonlinear hyperelastic strain energies to the linearized energy \cite{Agostiniani2012Linear,DalMaso2002Linearized}. The approaches of those works to justifying linearization via convergence of minimizers is the approach we take in the present work.

In summary, \cref{thm:Intro:GeometricRigidity} qualitatively resembles \eqref{eq:FJM} for general vector fields in the nonlocal space $\cS^{\rho,2}$, but in fact concerns distances to a different manifold, for which no local analogue holds in Sobolev spaces. 
It is the author's belief that geometric rigidity estimates of the type \eqref{eq:Intro:GeometricRigidity} will be useful for other asymptotic regimes in which nonlocality is preserved, analogously to \eqref{eq:FJM}. The estimate \eqref{eq:FJM} has been used to great effect in studying the behavior of continuum mechanics models in different paradigms, such as dimension reduction in nonlinear elasticity \cite{Friesecke2006hierarchy,Harutyunyan2020Geometric,Lewicka2023Calculus,Mora2004nonlinear}, and  \eqref{eq:Intro:GeometricRigidity} seems to be paramount in studying similar peridynamic formulations. Regarding such questions for other nonlocal models, dimension-reduction regimes for fractional seminorms have been studied in \cite{Braides2026Dimension}, and a linearization result for correspondence-type models with singular kernel has been obtained \cite{Kreisbeck2026Linearization} via reformulation of the model to a generalization of that in \cite{DalMaso2002Linearized}.

This paper is organized as follows: In \cref{sec:GeometricRigidity} we prove the fractional nonlinear geometric rigidity inequality (\cref{thm:Intro:GeometricRigidity}). The proofs of \cref{thm:VariationalConvergence}, \cref{thm:EnergyBound} and \cref{thm:GammaConvergence} are in \cref{sec:Justification}. \Cref{sec:Further} contains discussions on the validity of the results for different nonlocal kernels common in applications, as well as the proof of \cref{thm:Korn}.

\noindent \textbf{Acknowledgement.}\,
The author's research is supported by NSF DMS-2606364. The author also thanks Davit Harutyunyan, Marta Lewicka, Cy Maor, Tadele Mengesha, Hayk Mikayelyan, and Armin Schikorra for helpful and illuminating conversations which influenced the development of this manuscript.

\noindent \textbf{AI Disclosure.}\,
The author had previously obtained the Poincar\'e-type inequality of \cref{thm:GeometricRigidity:Poincare} via a different argument, largely inspired by the technical aspects of \cite{Kohn1982New}, and as a consequence the results of \cref{thm:VariationalConvergence} in the weak-$L^2$ topology. However, while preparing another manuscript \cite{MengeshaConsistency} the author made use of ChatGPT 5.6 Sol and realized that certain outputs  
could be adapted to obtain the stronger geometric rigidity estimate of \cref{thm:Intro:GeometricRigidity}. The only place the author used AI assistance directly in this work is a rewrite of the proof of \cref{lma:KeyElementaryEstimate} and \cref{thm:GeoRigid:Lp:Ball}. Previously, the author had written a much longer covering argument which obscured the nature of the estimate itself.
The weak-$L^2$ convergence of minimizers will be reported later in a different context.
The author takes all responsibility for the content of this manuscript.

\section{Geometric rigidity lemma}\label{sec:GeometricRigidity}
We begin by setting conventions for affine vector fields.
We define for $\psi(x) = Ax+b$ with $A \in \bbR^{n \times n}$ and $b \in \bbR^n$ the following norm
\begin{equation*}
	\Vnorm{\psi}_{\mathrm{aff}} := |A| + |b|,
\end{equation*}
where $|A| = \sqrt{\mathrm{tr}(A A^T)}$ denotes the Frobenius norm.
Elementary estimates and scaling arguments show that for any $n \in \bbN$, $p \in [1,\infty]$, $r > 0$ and $x_0 \in \bbR^n$
\begin{align}
	\frac{1}{C(n,p)} \vnorm{\psi}_{L^p(B(0,1))} &\leq \vnorm{\psi}_{\mathrm{aff}} \leq C(n,p) \vnorm{\psi}_{L^p(B(0,1))}, \label{eq:AffineMapEquivalence:Unscaled} \\
	\frac{1}{C(n,p) r^{n/p} \max\{1,r\} } \vnorm{\psi}_{L^p(B(0,r))} &\leq  \vnorm{\psi}_{\mathrm{aff}} \leq \frac{C(n,p)}{ r^{n/p} \min\{1,r\} } \vnorm{\psi}_{L^p(B(0,r))},  \label{eq:AffineMapEquivalence:Rescaled} \\
		\frac{1}{C(n,p) r^{n/p+1}} \vnorm{\psi}_{L^p(B(x_0,r))} &\leq  |A| \leq \frac{C(n,p)}{r^{n/p+1} } \vnorm{\psi}_{L^p(B(x_0,r))}, \label{eq:AffineMapEquivalence:Rescaled:MatrixPart}
\end{align}
where we take $n/p = 0$ in the case $p = \infty$.
The tensor product of vectors $a$, $b \in \bbR^n$ is denoted $a \otimes b \in \bbR^{n \times n}$.
The next lemma follows from \eqref{eq:AffineMapEquivalence:Rescaled} and a translation argument:

\begin{lemma}
	\label{lma:PolynomialsOnSets}
	Let $B_1 = B(x_1,R_1)$, $B_2 = B(x_2,R_2)$,  $B_3 = B(x_3,R_3)$ be Euclidean balls in $\mathbb{R}^n$ with $B_1, B_2 \subset B_3$. Then for any affine vector field $\psi(x) = Ax + b$ and $p \in [1,\infty)$,
	\begin{equation}\label{eq:AffineNormComp}
		\vnorm{\psi}_{L^p(B_1)} \leq C(n) \left( \frac{R_1}{R_2} \right)^{n/p}
		\frac{R_3}{R_2}
		\vnorm{\psi}_{L^p(B_2)}.
	\end{equation}
\end{lemma}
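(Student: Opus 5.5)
The plan is to reduce to balls centred at the origin, where \eqref{eq:AffineMapEquivalence:Rescaled} applies, by translating everything so that the containing ball $B_3$ sits in a controlled position. Write $\psi(x) = Ax+b$. Since $B_1$ and $B_2$ are both subsets of $B_3$, we have $R_1 \le R_3$ and $R_2 \le R_3$. Every point of $B_1 \cup B_2$ lies within distance $2R_3$ of $x_2$. Hence $B_1 \subset B(x_2, 2R_3)$.

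First, recentre at $x_2$. Define $\tilde\psi(y) := \psi(y+x_2) = Ay + (Ax_2+b)$, which is again affine. The change of variables $y = x - x_2$ preserves $L^p$ norms. It gives $\vnorm{\psi}_{L^p(B_2)} = \vnorm{\tilde\psi}_{L^p(B(0,R_2))}$ and $\vnorm{\psi}_{L^p(B_1)} \le \vnorm{\tilde\psi}_{L^p(B(0,2R_3))}$. Note that the norm $\Vnorm{\cdot}_{\mathrm{aff}}$ is not translation invariant. This is why we pass to $\tilde\psi$ instead of working with $\psi$ directly.

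Second, compare $\tilde\psi$ on the two origin-centred balls by rescaling to unit size. Set $\phi(z) := \tilde\psi(R_2 z) = R_2 A z + \tilde b$, with $\tilde b = Ax_2+b$. Then $\vnorm{\tilde\psi}_{L^p(B(0,r R_2))} = R_2^{n/p}\vnorm{\phi}_{L^p(B(0,r))}$. Applying \eqref{eq:AffineMapEquivalence:Rescaled} to $\phi$ with $r = 2R_3/R_2 \ge 1$, and then \eqref{eq:AffineMapEquivalence:Unscaled}, gives
\[
\vnorm{\phi}_{L^p(B(0,2R_3/R_2))} \le C(n,p)\Big(\tfrac{R_3}{R_2}\Big)^{n/p}\tfrac{R_3}{R_2}\Vnorm{\phi}_{\mathrm{aff}} \le C(n,p)\Big(\tfrac{R_3}{R_2}\Big)^{n/p}\tfrac{R_3}{R_2}\vnorm{\phi}_{L^p(B(0,1))}.
\]
Undoing the scaling yields
\[
\vnorm{\psi}_{L^p(B_1)} \le C\Big(\tfrac{R_3}{R_2}\Big)^{n/p}\tfrac{R_3}{R_2}\vnorm{\psi}_{L^p(B_2)}.
\]

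Third, and this is the main obstacle, the bound just obtained carries $(R_3/R_2)^{n/p}$, while \eqref{eq:AffineNormComp} asks for $(R_1/R_2)^{n/p}$. So the crude inclusion $B_1 \subset B(x_2,2R_3)$ wastes volume. Instead I would bound $\vnorm{\psi}_{L^p(B_1)} \le |B_1|^{1/p}\sup_{B_1}|\psi|$, which already contributes the factor $R_1^{n/p}$. Then I would control the sup norm by
\[
\sup_{B(x_2,2R_3)}|\tilde\psi| \le C\,\vnorm{\phi}_{L^\infty(B(0,2R_3/R_2))} \le C\,\tfrac{R_3}{R_2}\Vnorm{\phi}_{\mathrm{aff}},
\]
using \eqref{eq:AffineMapEquivalence:Rescaled} with $p=\infty$. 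Next, $\Vnorm{\phi}_{\mathrm{aff}} \le C\vnorm{\phi}_{L^p(B(0,1))} = C R_2^{-n/p}\vnorm{\psi}_{L^p(B_2)}$. Combining these gives exactly
\[
\vnorm{\psi}_{L^p(B_1)} \le C\Big(\tfrac{R_1}{R_2}\Big)^{n/p}\tfrac{R_3}{R_2}\vnorm{\psi}_{L^p(B_2)}.
\]
The constant depends on $n$ and $p$. For $p \ge 1$ the powers $C(n,p)^{1/p}$ that appear can be bounded in terms of $n$ alone, matching the stated $C(n)$.
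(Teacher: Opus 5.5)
Your argument is correct and is essentially the paper's. The paper only says the lemma follows from \eqref{eq:AffineMapEquivalence:Rescaled} and a translation argument. Your recentring at $x_2$, rescaling by $R_2$, bounding $\|\psi\|_{L^p(B_1)}\le |B_1|^{1/p}\sup_{B(x_2,2R_3)}|\psi|$ via the $p=\infty$ case, and returning to $B_2$ through \eqref{eq:AffineMapEquivalence:Unscaled} is that argument written out; your second step becomes superfluous once you use the sup bound.

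The one loose end is the claim that the constant depends only on $n$. The constant in \eqref{eq:AffineMapEquivalence:Unscaled} is not obviously a power $C(n,p)^{1/p}$, so justify it directly instead: by H\"older, $\|\phi\|_{\mathrm{aff}}\le C(n)\|\phi\|_{L^1(B(0,1))}\le C(n)\max\{1,|B(0,1)|\}\,\|\phi\|_{L^p(B(0,1))}$, and $|B_1|^{1/p}\le \max\{1,|B(0,1)|\}\,R_1^{n/p}$.
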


\subsection{Rigidity for Lebesgue vector fields}

\begin{proposition}\label{prop:Scaled:NoDiff}
	Let $1 \leq p < \infty$ and let $B(x_0,r) \subset \bbR^n$ be a ball. Then there exist constants $M(n) > 1$ and $C(n,p)$ such that the following holds: For any vector field $v \in L^p(B(x_0,Mr);\bbR^n)$ there exists $\psi \in \scR$ such that
	\begin{equation}\label{eq:Scaled:NoDiff}
		\intdm{B(x_0,r)}{|v(x)-\psi(x)|^p}{x} \leq C r^{-n} \iintdm{B(x_0,Mr)}{B(x_0,Mr)}{ \big| |v(x)-v(y)| - |x-y| \big|^p }{y}{x}\,.
	\end{equation}
\end{proposition}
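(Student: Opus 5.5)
Without loss of generality I will take $x_0 = 0$ and $r = 1$. Given $v$ on $B(x_0,Mr)$, set $\tilde v(x') := r^{-1} v(x_0 + r x')$. Then
\[
\big| |v(x)-v(y)| - |x-y| \big| = r \, \big| |\tilde v(x')-\tilde v(y')| - |x'-y'| \big| ,
\]
and rigid motions for $\tilde v$ correspond to rigid motions for $v$. Changing variables, the left-hand side of \eqref{eq:Scaled:NoDiff} scales like $r^{n+p}$ and the double integral scales like $r^{2n+p}$. This accounts exactly for the factor $r^{-n}$, so it suffices to prove the case $B(0,1)$ with a constant $C(n,p)$.

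The main tool is the geometric lemma of Kohn (\cite[Proposition 3.13]{Kohn1982New}, \Cref{lma:GeometricLemma}), which I use in the following form. There are $\delta = \delta(n) > 0$ and $C(n)$ such that, whenever $z_0 \in B(0,\delta)$ and $z_i \in B(e_i,\delta)$ for $i = 1,\dots,n$, there is $\psi_Z \in \scR$, depending only on $Z = (z_0,\dots,z_n)$ and the values $v(z_j)$, with
\begin{equation*}
	|v(x) - \psi_Z(x)| \leq C \Big( \sum_{0 \le i<j \le n} \big| |v(z_i)-v(z_j)| - |z_i-z_j| \big| + \sum_{i=0}^n \big| |v(x)-v(z_i)| - |x-z_i| \big| \Big)
\end{equation*}
for all $x \in B(0,1)$. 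The point is that the constant is uniform over the whole neighborhood of the standard orthonormal frame $(0,e_1,\dots,e_n)$, which Kohn's statement allows. I take $M = 3$, so that $B(0,1)$ and all the balls $B(e_i,\delta)$ lie in $B(0,M)$.

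Next I raise this inequality to the power $p$, using $(\sum a_k)^p \le C(n,p) \sum a_k^p$, and integrate in $x \in B(0,1)$. Call the resulting right-hand side $F(Z)$, so that $\int_{B(0,1)} |v - \psi_Z|^p \le F(Z)$. Then I average $F$ over $Z$ in the product set $P := B(0,\delta) \times \prod_i B(e_i,\delta)$, which has measure $c(n)\delta^{n(n+1)}$.

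Each term of $F$ involves only a pair of points, each ranging over a subset of $B(0,M)$. For a pair $(x,z_i)$, integrating out the other variables produces a constant factor times $\iint_{B(0,M)^2} \big| |v(x)-v(y)|-|x-y| \big|^p \, \rmd y \, \rmd x$. For a pair $(z_i,z_j)$ the same happens, with an additional factor $|B(0,1)|$. Hence
\[
\fint_P F(Z) \, \rmd Z \le C(n,p) \, I, \qquad I := \iint_{B(0,M)^2} \big| |v(x)-v(y)| - |x-y| \big|^p \, \rmd y \, \rmd x .
\]

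Finally, I select a good frame. Since $F \ge 0$ is integrable on $P$ with average at most $C I$, the set of $Z \in P$ with $F(Z) \le C I$ has positive measure. The null sets where $v$ is undefined, or where the pointwise bound fails, have measure zero, so I can choose such a $Z$ outside them. Then $\psi := \psi_Z$ satisfies \eqref{eq:Scaled:NoDiff}. This avoids any need for measurable dependence of $\psi_Z$ on $Z$, since only the scalar function $F$ is averaged.

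The main obstacle is the first step: confirming that Kohn's lemma really gives the pointwise bound for every $x$ in a fixed ball, with a constant uniform over perturbed frames and with no smallness assumption on the strains. If the lemma only holds under a smallness hypothesis, I would split into two cases. When the frame distortion is small, I apply the lemma. When it is large, $F(Z)$ is already at least a constant, and I would instead compare $v$ with a crude rigid motion, for example a translation matched at $z_0$, bounding $|v(x)-v(z_0)|$ by $|x-z_0| + \big| |v(x)-v(z_0)|-|x-z_0| \big|$.
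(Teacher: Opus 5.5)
Your reduction to $x_0=0$, $r=1$ is correct. Averaging the scalar function $F$ over the product of frame balls is a clean version of the paper's Chebyshev selection of a good configuration $\vec x\in A\setminus A^\lambda$. With $M=3$ everything you integrate lives in $B(0,3)^2$, so once your pointwise bound is available the argument closes.

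The gap is that pointwise bound: it is not what \cref{lma:GeometricLemma} says. Kohn's lemma uses the symmetric $2n$-point frame near $\pm\kappa\theta_i$. It controls $|w-\psi(z)|$ only for $z\in B(0,\kappa r_0)$, the small ball enclosed by the frame, and only through the one-sided quantities $(|w-y_i|-|z-x_i|)_+$. This is why the paper obtains the estimate only on $B(0,1/M)$ with $M=10(1+r_0)$. It is also why the paper needs a separate extension step, via \cref{lma:KeyElementaryEstimate} in \cref{thm:GeoRigid:Lp:Ball}. The cited lemma does not apply to your $(n+1)$-point frame $(0,e_1,\dots,e_n)$. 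Even for its own frame, it says nothing about points outside the small central ball, let alone all of $B(0,1)$. So the obstacle you flagged is real, but it is not about smallness, and your fallback does not address it.

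The statement you want is nonetheless true. Because you use two-sided deviations, it has an elementary proof, which you need to supply.

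\emph{Rigidity of the frame.} Let $\varepsilon:=\max_{i<j}\big||v(z_i)-v(z_j)|-|z_i-z_j|\big|$. There is $\psi\in\scR$ with $|v(z_i)-\psi(z_i)|\le C\varepsilon$ for all $i$. If $\varepsilon\ge 1$, take the translation $x\mapsto x-z_0+v(z_0)$. If $\varepsilon<1$, the Gram matrices of $\{v(z_i)-v(z_0)\}_{i=1}^n$ and $\{z_i-z_0\}_{i=1}^n$ differ by $O(\varepsilon)$. Polar decomposition, together with the Lipschitz bound for the matrix square root at the (uniformly, for $\delta(n)$ small) positive definite Gram matrix of the perturbed frame, then gives the rotation.

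\emph{Trilateration.} Set $\tilde w:=\psi^{-1}(v(x))$ and $D:=\max_i\big||\tilde w-z_i|-|x-z_i|\big|$. Then $D$ is at most $\max_i\big||v(x)-v(z_i)|-|x-z_i|\big|+C\varepsilon$. If $|\tilde w-x|\ge 12$, then $D\ge|\tilde w-x|/2$. Otherwise, the identity
\[
2(\tilde w-x)\cdot(z_i-z_0)=\big(|\tilde w-z_0|^2-|x-z_0|^2\big)-\big(|\tilde w-z_i|^2-|x-z_i|^2\big)
\]
gives $|(\tilde w-x)\cdot(z_i-z_0)|\le 18D$. Invertibility of the matrix with columns $z_i-z_0$ then yields $|v(x)-\psi(x)|=|\tilde w-x|\le CD$. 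This step is a discrete form of \cref{lma:KeyElementaryEstimate}.

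With this lemma proved, your argument is complete. It merges the paper's two steps (Kohn's lemma on a small central ball, then extension via \cref{lma:KeyElementaryEstimate}) into one, and it avoids Kohn's result altogether. The one-sidedness of Kohn's lemma buys nothing here, since the paper immediately bounds $a_+\le|a|$.
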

\noindent The geometric fact central to the proof is contained in the following lemma:
\begin{lemma}[\cite{Kohn1982New}, Proposition 3.13]\label{lma:GeometricLemma}
	There exist constants $r_0(n) < 1$ and $C_0(n)$ with the following property:
	Let $\kappa > 0$ and let $\{ \theta_1, \ldots, \theta_n \}$ be an orthonormal basis for $\bbR^n$. Suppose $\{ x_1, \ldots, x_{2n} \} \subset \bbR^n$ and $\{ y_1, \ldots, y_{2n} \} \subset \bbR^n$ satisfy
	\begin{equation*}
		x_i \in B(\kappa \theta_i, \kappa r_0)\,, \qquad x_{i+n} \in B(-\kappa \theta_i, \kappa r_0)\,, \qquad 1 \leq i \leq n\,.
	\end{equation*}
	Then there exists a rigid motion $\psi \in \scR$ such that
	\begin{equation*}
		|y_k - \psi(x_k)| \leq C_0 \max_{1 \leq i,j \leq n} \big| |y_i-y_j | - |x_i-x_j| \big|, \qquad \text{ for } 1 \leq k \leq 2n,
	\end{equation*}
	and for any $z \in B(0,\kappa r_0)$ and any $w \in \bbR^n$
	\begin{equation*}
		|w-\psi(z)| \leq C_0 \left( \max_{1 \leq i \leq n} (|w-y_i| - |z-x_i|)_+ + \max_{1 \leq i,j \leq n} \big| |y_i-y_j | - |x_i-x_j| \big| \right).
	\end{equation*}
\end{lemma}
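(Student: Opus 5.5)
Both inequalities are invariant under dilating all $x_k$, $y_k$, $z$, $w$ by $1/\kappa$, since every quantity involved is a distance. So I would assume $\kappa=1$. Then $|x_k|\in(1-r_0,1+r_0)$, $x_k$ is within $r_0$ of $\pm\theta_i$, and all mutual distances $|x_k-x_l|$ lie in $[\sqrt2-2r_0,\,2+2r_0]$ or equal $0$.

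For the argument I read the maxima in both estimates as ranging over all indices $1\le i,j\le 2n$, since the images of the points near $-\theta_i$ cannot be controlled otherwise. Write
$$\delta:=\max_{i,j}\big||y_i-y_j|-|x_i-x_j|\big|.$$

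\textbf{Large $\delta$.} Fix a small $\delta_0(n)$ and suppose $\delta\ge\delta_0$. Take $\psi(x)=x-x_1+y_1$. Then for every $k$
$$|y_k-\psi(x_k)|\le |y_k-y_1|+|x_k-x_1|\le 2|x_k-x_1|+\delta\le 6+\delta\le (6/\delta_0+1)\,\delta.$$
So the first estimate is trivial in this regime, and the second estimate follows from the same argument as in the small-$\delta$ case below.

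\textbf{Small $\delta$: an almost-isometric linear map.} Set $X_k=x_k-x_1$ and $Y_k=y_k-y_1$. By polarization,
$$X_k\cdot X_l=\tfrac12\big(|X_k|^2+|X_l|^2-|X_k-X_l|^2\big),$$
and similarly for $Y$, so $|Y_k\cdot Y_l-X_k\cdot X_l|\le C\delta$, using the boundedness of the lengths.

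Choose the $n$ vectors $X_{n+1}\approx-2\theta_1$ and $X_i\approx\theta_i-\theta_1$ for $2\le i\le n$. For $r_0(n)$ small, the matrix $\mathbf X$ with these columns is invertible with $|\mathbf X^{-1}|\le C(n)$. Let $\mathbf Y$ be the matrix of the corresponding $Y$'s and set $L=\mathbf Y\mathbf X^{-1}$. Then
$$L^TL=\mathbf X^{-T}(\mathbf Y^T\mathbf Y)\mathbf X^{-1}=I+O(\delta).$$
For $\delta_0$ small this makes $\mathbf Y$ well conditioned as well. Polar decomposition gives $L=QS$ with $Q\in\rmO(n)$ and $|S-I|\le C|L^TL-I|\le C\delta$.

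For any other index $k$, put $Z=Y_k-LX_k$. For each basis vector $Y_j=LX_j$,
$$Z\cdot Y_j=Y_k\cdot Y_j-X_k\cdot L^TLX_j=X_k\cdot X_j+O(\delta)-X_k\cdot X_j+O(\delta)=O(\delta).$$
Since the $Y_j$ form a well-conditioned basis, $|Z|\le C\delta$. Define $\psi(x)=y_1+Q(x-x_1)$. Then
$$|y_k-\psi(x_k)|\le|Y_k-LX_k|+|(S-I)X_k|\le C_0\delta.$$

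\textbf{Second estimate.} Let $w'=\psi^{-1}(w)$; this is a rigid image, so $|w-\psi(z)|=|w'-z|$. By the first estimate,
$$|w-y_i|\ge|w'-x_i|-C_0\delta.$$
It therefore suffices to show the purely geometric bound
$$\max_i\big(|w'-x_i|-|z-x_i|\big)\ge c(n)\,|w'-z|\qquad\text{for } z\in B(0,r_0).$$
For $h=w'-z$, the $2n$ directions $(x_i-z)/|x_i-z|$ are within $O(r_0)$ of $\pm\theta_i$. Hence some $i$ satisfies $(x_i-z)\cdot h\le -c|x_i-z||h|$. Expanding,
$$|w'-x_i|^2\ge|z-x_i|^2+|h|^2+2c|z-x_i||h|.$$
Using $|z-x_i|\approx1$, this gives $|w'-x_i|-|z-x_i|\ge c'|h|$ in both the regimes $|h|\lesssim1$ and $|h|\gg1$. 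Combining yields the second inequality.

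\textbf{Main obstacle.} The step needing care is the quantitative linear algebra: uniform conditioning of $\mathbf X$ and of $\mathbf Y$, and the Lipschitz bound $|S-I|\lesssim|L^TL-I|$ for the polar factor, with constants depending only on $n$. This is where $r_0(n)$ and $\delta_0(n)$ are fixed.
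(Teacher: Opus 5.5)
Your proof is correct. The paper gives no proof of this lemma; it imports it from Kohn. So there is nothing internal to compare routes against, and what you supply is a complete, self-contained argument with explicit $n$-dependent constants.

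Letting every maximum run over all $2n$ indices is not just convenient but necessary, because the statement as printed is false:
\begin{itemize}
\item For $n=1$ the right-hand side of the first estimate is identically $0$. That would force $|y_1-y_2|=|x_1-x_2|$ for arbitrary $y$'s.
\item Take $y_k=x_k$, which forces $\psi=\mathrm{id}$. Then $w=z+t\sum_i\theta_i$ with small $t>0$ makes every term $(|w-x_i|-|z-x_i|)_+$ with $i\le n$ vanish, while $|w-z|>0$.
\end{itemize}
Your reading is also exactly how the lemma is used in the proof of \cref{prop:Scaled:NoDiff}. There the sets $\mathcal{M}^\lambda_{ij}$ and $\mathcal{N}^\lambda_i$ range over $2n$ indices, and the conclusion carries $\sum_{i=1}^{2n}$.

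The strongest structural feature of your argument is the decoupling. The second estimate follows from the first for \emph{any} rigid motion satisfying it, via the purely geometric bound $\max_i(|w'-x_i|-|z-x_i|)\ge c(n)|w'-z|$. That is why the crude translation in the large-$\delta$ regime is enough.

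The case split at the end is unnecessary:
\begin{itemize}
\item Choose $i$ and a sign with $(\pm\theta_i)\cdot h\le -|h|/\sqrt n$. Then $c=1/(2\sqrt n)$ works once $r_0$ is small.
\item With $a=|z-x_i|$ and $t=|h|$, using $c\le 1$, one has $\sqrt{a^2+2cat+t^2}-a=\frac{t(2ca+t)}{\sqrt{a^2+2cat+t^2}+a}\ge\frac{t(2ca+t)}{2a+t}\ge ct$ for every $t\ge 0$.
\end{itemize}

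The linear-algebra facts you flag as the main obstacle are standard and hold as you use them:
\begin{itemize}
\item $\mathbf X$ is an $O(r_0)$ perturbation of a fixed matrix that is upper triangular in the $\theta$-basis with determinant $-2$.
\item $L^TL=I+O(\delta)$ makes $L$, and hence $\mathbf Y=L\mathbf X$, uniformly invertible.
\item $|(L^TL)^{1/2}-I|\le|L^TL-I|$ holds eigenvalue-wise, since $|\sqrt\mu-1|\le|\mu-1|$.
\end{itemize}
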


\begin{proof}[proof of \Cref{prop:Scaled:NoDiff}]
	Set $B = B(0,1)$ and $B' = B(0,1/10)$. Let $r_0$ and $C_0$ be as given in Lemma \ref{lma:GeometricLemma}. Let $\kappa = \frac{1}{10(1+r_0)}$ and let $B'' = B(0,\kappa)$. Let $\{ \theta_1, \ldots, \theta_n \}$ be an orthonormal basis for $\bbR^n$.
	Define the set $A \subset \bbR^{n \times 2n} := \bbR^{2n^2}$ by
	\begin{equation*}
		A := B(\kappa \theta_1, \kappa r_0) \times B(\kappa \theta_2, \kappa r_0) \times \ldots \times B(\kappa \theta_n, \kappa r_0) \times B(- \kappa \theta_1, \kappa r_0) \times \ldots \times B(- \kappa \theta_n, \kappa r_0).
	\end{equation*}
	Write elements of $A$ as $\vec{x} = (x_1, \ldots x_{2n})$. We will apply Lemma \ref{lma:GeometricLemma} to $y_i= v(x_i)$ and $w = v(z)$ for the right choice of $\vec{x} \in A$. 
	Note: $B( \pm \kappa \theta_i, \kappa r_0) \subset B'$ and  $B'' \cap B(\pm \kappa \theta_i, \kappa r_0) = \emptyset$ for all $i$.
	Define for $\lambda > 0$
	\begin{equation*}
		\cM^{\lambda}_{ij} := \left\{ \vec{x} \in A \, : \, |x_i-x_j| \left| \frac{|v(x_i)-v(x_j)|}{|x_i-x_j|} - 1 \right| > \lambda \right\}\,, \quad  1 \leq i,j \leq 2n\,,
	\end{equation*}
	\begin{equation*}
		\cN^{\lambda}_i := \left\{ \vec{x} \in A \, : \, \left( \intdm{B''}{ |x_i-y|^p  \left( \frac{|v(x_i)-v(y)|}{|x_i-y|} - 1 \right)_+^p}{y} \right)^{1/p} > \lambda \right\}\,, \quad 1 \leq i \leq 2n\,,
	\end{equation*}
	and set
	\begin{equation*}
		A^{\lambda} := \bigcup_{i,j = 1}^{2n} \cM^{\lambda}_{ij} \cup \bigcup_{i = 1}^{2n} \cN^{\lambda}_{i}\,.
	\end{equation*}
	We will show that $A \setminus A^{\lambda}$ is nonempty for an appropriate choice of $\lambda$ that depends on the right-hand side of \eqref{eq:Scaled:NoDiff}.
	Using Chebyshev's inequality 
	\begin{equation*}
		\begin{split}
			\cL^{2n^2}(\cM^{\lambda}_{ij}) &\leq \frac{1}{\lambda^p} \iintdm{B}{B}{ |x-y|^p \left| \frac{|v(x)-v(y)|}{|x-y|} - 1 \right|^p }{y}{x} \\ 
			&\leq  \frac{1}{\lambda^p} \iintdm{B}{B}{  |x-y|^p |\rme[v](x,y)|^p  }{y}{x} \qquad \text{ for all } 1 \leq i,j \leq 2n\,.
		\end{split}
	\end{equation*}
	In the same way, since $a_+ \leq |a|$ for $a \in \bbR$ we have
	\begin{equation*}
		\cL^{2n^2}(\cN^{\lambda}_{i}) \leq \frac{1}{\lambda^p}  \iintdm{B}{B}{ |x-y|^p |\rme[v](x,y)|^p }{y}{x} \qquad \text{ for all } 1 \leq i \leq 2n\,.
	\end{equation*}
	Therefore there exists $C' = C'(n)$ such that
	\begin{equation*}
		\cL^{2n^2} (A^{\lambda}) \leq \frac{C'}{\lambda^p} \iintdm{B}{B}{ |x-y|^p |\rme[v](x,y)|^p }{y}{x}\,.
	\end{equation*}
	Now, setting $E := \left( \iintdm{B}{B}{ |x-y|^p |\rme[v](x,y)|^p }{y}{x} \right)^{1/p}$, choose
	\begin{equation}\label{eq:ChosenLevelSet}
		\lambda = \left( \frac{2C'}{\cL^{2n^2}(A) }+1 \right) E\,.
	\end{equation}
	Then since $\frac{E}{\lambda} \leq 1$
	\begin{equation*}
		C'\left( \frac{E}{\lambda} \right)^p \leq C'\left( \frac{E}{\lambda} \right) \leq \frac{1}{2} \cL^{2n^2}(A)\,.
	\end{equation*}
	Thus $\cL^{2n^2}(A^{\lambda}) \leq \frac{1}{2} \cL^{2n^2}(A)$, and so there must exist $\vec{x} \in A \setminus A^{\lambda}$. Thus for this choice of $\vec{x}$ we have by definition of $A^{\lambda}$
	\begin{equation*}
		\big| |v(x_i)-v(x_j)| - |x_i-x_j| \big| \leq \lambda\,, \qquad 1 \leq i,j \leq 2n,
	\end{equation*}
	and
	\begin{equation*}
		\left( \intdm{B''}{ \big( |v(x_i)-v(y) | - |x_i-y| \big)_+^p}{y} \right)^{1/p} \leq \lambda\,, \qquad 1 \leq i \leq 2n\,.
	\end{equation*}
	Now we apply Lemma \ref{lma:GeometricLemma} with $y_i = v(x_i)$, $\veps_1 = \lambda$, and $w = v(z)$ to obtain a rigid motion $\psi \in \scR$ such that
	\begin{equation*}
		|v(x_i)-\psi(x_i)| \leq C_0 \lambda\,, \qquad 1 \leq i \leq 2n,
	\end{equation*}
	and for all $z \in B''$ 
	\begin{equation*}
		|v(z)-\psi(z)| \leq C_0 \left( \lambda + \sum_{i=1}^{2n} \big( |v(z)-v(x_i)| - |z-x_i| \big)_+ \right)\,.
	\end{equation*}
	Integrating the last inequality over $z \in B''$ and using that $\vec{x} \in A \setminus A^{\lambda}$:
	\begin{equation*}
		\begin{split}
			\left( \intdm{B''}{|v(z)-\psi(z)|^p}{z} \right)^{1/p} &\leq C(n,p) \left(  |B''| \lambda + \sum_{i=1}^{2n} \left( \intdm{B''}{ \big( |v(x_i)-v(z) | - |x_i-z| \big)_+^p}{z} \right)^{1/p}   \right) \\
			&\leq C(n,p) \lambda\,.
		\end{split}
	\end{equation*}
	Thus by the value of $\lambda$ chosen in \eqref{eq:ChosenLevelSet} we have proved that
	\begin{equation}\label{eq:Scaled:NoDiff:Pf1}
		\intdm{B(0,1/M)}{|v(x)-\psi(x)|^p}{x} \leq N \iintdm{B(0,1)}{B(0,1)}{ |x-y|^p |\rme[v](x,y)|^p }{y}{x}
	\end{equation}
	with positive constants $N = N(n,p)$ and $M = 10(1+r_0)$.
	
	To finish the proof for arbitrary $x_0 \in \bbR^n$ and $r>0$ we use a translation and rescaling argument.
	let $M = 10(1+r_0)$, and suppose $v \in L^p(B(x_0,Mr);\bbR^n)$. 
	Then $w(x) = (Mr)^{-1} v(x_0+Mrx)$ belongs to $L^p(B(0,1);\bbR^n)$, and there exists $\bar{\psi} \in \scR$ such that \eqref{eq:Scaled:NoDiff:Pf1} holds for $w$. Then a change of variables shows that \eqref{eq:Scaled:NoDiff} holds for $v$ with rigid motion $\psi(x) = Mr \bar{\psi}((x-x_0/(Mr))$.
\end{proof}

We adapt this estimate to more general sets, so that we can later apply a localization argument for $\Omega$. We first need an elementary lemma comparing the Euclidean norm to a quantity related to the strain.

\begin{lemma}\label{lma:KeyElementaryEstimate}
	Let $p \in [1,\infty)$, $z \in \bbR^n$, $r > 0$ and $\alpha \in (0,1)$. Then there exists $C = C(n,\alpha) > 0$ such that for all $w \in \bbR^n$ and all $x \in B(z,2r)$
	\begin{equation}\label{eq:KeyElementaryEstimate}
		|w-x|^p \leq C r^{-n} \int_{B(z,\alpha r)} \big| |w-y| - |x-y| \big|^p \, \rmd y.
	\end{equation}
\end{lemma}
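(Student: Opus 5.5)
The plan is to reduce to a normalized configuration and then find a subset of $B(z,\alpha r)$ of fixed proportional measure on which $\big||w-y|-|x-y|\big|$ is bounded below by a fixed multiple of $|w-x|$.

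\emph{Normalization.} Both sides of \eqref{eq:KeyElementaryEstimate} are invariant under translations. Under the dilation $y\mapsto z+ry$ (applied also to $w$ and $x$), both sides scale like $r^p$: the right side picks up $r^{-n}\cdot r^n\cdot r^p$. It therefore suffices to treat $z=0$, $r=1$ and $x\in B(0,2)$. The case $w=x$ is trivial, so set $d:=|w-x|>0$ and $e:=(w-x)/d$. Write $f(y):=|w-y|-|x-y|$ and $m:=(w+x)/2$.

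\emph{Key identity and two regimes.} Rationalizing gives
\begin{equation*}
f(y)=\frac{|w-y|^2-|x-y|^2}{|w-y|+|x-y|}=\frac{2d\, e\cdot(m-y)}{|w-y|+|x-y|}.
\end{equation*}
For $y\in B(0,\alpha)$ we have $|x-y|<3$, and hence $|w-y|+|x-y|\le d+2|x-y|\le d+6$.

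\emph{Large $d$, say $d\ge 12$.} Here I skip the identity and use the reverse triangle inequality directly: $f(y)\ge d-2|x-y|\ge d-6\ge d/2$ for every $y\in B(0,\alpha)$. Integrating gives $\int_{B(0,\alpha)}|f|^p\ge |B(0,\alpha)|\,2^{-p}d^p$.

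\emph{Small $d$, $d<12$.} Let
\begin{equation*}
S:=\{y\in B(0,\alpha):|e\cdot(m-y)|\ge \alpha/4\}.
\end{equation*}
This is $B(0,\alpha)$ with a slab of width $\alpha/2$ removed, the slab being orthogonal to $e$ and centered at $m$. Whatever the position of $m$, the slab meets $B(0,\alpha)$ in a set of measure at most $\alpha\cdot\omega_{n-1}\alpha^{n-1}/2$ (a slab of half the ball's diameter through the ball). So $|S|\ge c(n)\alpha^n>0$; concretely one can compare with the slab through the center, which is the worst case. On $S$ the identity gives
\begin{equation*}
|f(y)|\ge \frac{2d\,(\alpha/4)}{d+6}\ge \frac{\alpha}{36}\,d,
\end{equation*}
using $d<12$. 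Hence $\int_{B(0,\alpha)}|f|^p\ge |S|(\alpha/36)^p d^p$.

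\emph{Conclusion.} Combining the two regimes yields $d^p\le C\int_{B(0,\alpha)}|f|^p$ with
\begin{equation*}
C=\max\{2^p/|B(0,\alpha)|,\ (36/\alpha)^p/|S|\}.
\end{equation*}
Undoing the normalization gives \eqref{eq:KeyElementaryEstimate}. Note that this constant depends on $p$ as well, through factors of the form $c(\alpha)^{-p}$. This is harmless for the later applications, where $p$ is fixed, although the statement writes only $C(n,\alpha)$.

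The only genuinely geometric step is the lower bound on $|S|$ uniformly in the direction $e$ and the point $m$. I expect this to be routine: the $(n-1)$-dimensional cross-sections of a ball have area at most $\omega_{n-1}\alpha^{n-1}$, so any slab of width $\alpha/2$ removes at most a fixed fraction, strictly less than one, of $B(0,\alpha)$.
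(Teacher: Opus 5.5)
Your argument is essentially the paper's proof. You use the same normalization, the same split at $|w-x|=12$, and the same rationalization with the denominator bounded by $18$ (your $d+6$). There are two small differences. First, you treat general $p$ directly rather than reducing to $p=1$ by Jensen. Second, you prove the lower bound for the affine function $y\mapsto 2d\,e\cdot(m-y)$ by hand via the set $S$, instead of quoting the equivalence of norms on affine maps, \eqref{eq:AffineMapEquivalence:Rescaled:MatrixPart}. Your remark that the constant must depend on $p$ is correct; the paper's Jensen step produces the same dependence.

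One justification needs repair. Bounding the removed slab by width times maximal cross-section, $\tfrac{\alpha}{2}\,\omega_{n-1}\alpha^{n-1}$, does \emph{not} show that the slab misses a fixed fraction of $B(0,\alpha)$ in high dimension. Since $\omega_n/\omega_{n-1}\to 0$, one has $\omega_{n-1}>2\omega_n$ for all $n\ge 25$, so for those $n$ this bound exceeds $|B(0,\alpha)|$ and says nothing.

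Your alternative remark, that the central slab is the worst case, is true but unproved. The cleanest fix is that $S$ always contains a cap:
\begin{itemize}
\item if $e\cdot m\ge 0$, then $\{y\in B(0,\alpha): e\cdot y\le -\alpha/4\}\subset S$;
\item if $e\cdot m<0$, then $\{y\in B(0,\alpha): e\cdot y\ge \alpha/4\}\subset S$.
\end{itemize}
By rotation invariance, each cap has measure $c(n)\alpha^n$, independent of $e$ and $m$. It is this uniform lower bound $c(n)\alpha^n$, not $|S|$ itself, that should enter your final constant.
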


\begin{proof}
	It suffices to prove \eqref{eq:KeyElementaryEstimate} for $z = 0$, $r = 1$ and $p = 1$; the estimate for general $z$ and $r$ will follow by rescaling, and for general $p$ from Jensen's inequality. First assume that $|w-x| \geq 12$. Then since $|x-y| \leq 3$,
	\begin{equation*}
		\begin{split}
			|w-x| \leq |w-y| + |x-y| &= |w-y| - |x-y| + 2|x-y| \\
			&\leq \big| |w-y| - |x-y| \big| + 6 \leq \big| |w-y| - |x-y| \big| + \frac{1}{2}|w-x|.
		\end{split}
	\end{equation*}
	Absorbing $|w-x|$ onto the left-hand side and integrating in $y$, we obtain \eqref{eq:KeyElementaryEstimate}.
	
	Now suppose that $|w-x| < 12$. 
	Then $|w-y| + |x-y| \leq 18$ for all $y \in B(0,\alpha)$, and so
	\begin{equation}\label{eq:ElementaryEst:Pf1}
		\begin{split}
			\int_{B(0,\alpha)} \big| |w-y| - |x-y| \big| \, \rmd y 
				&= \int_{B(0,\alpha)} \frac{\big| |w-y|^2 - |x-y|^2 \big|}{|w-y| + |x-y|} \, \rmd y \\
				&\geq \frac{1}{18} \int_{B(0,\alpha)} \big| |w-y|^2 - |x-y|^2 \big| \, \rmd y  \\
				&= \frac{1}{18} \int_{B(0,\alpha)} \big| 2(w-x) \cdot y + |w|^2 - |x|^2 \big| \, \rmd y.
		\end{split}
	\end{equation}
	We note that for any affine function of the form $a \cdot x + b$ where $a$, $b \in \bbR^n$
	\begin{equation*}
		|a| \leq C(n,\alpha) \int_{B(0,\alpha)} |a \cdot y + b| \, \rmd y.
	\end{equation*}
	(This can be seen by applying \eqref{eq:AffineMapEquivalence:Rescaled:MatrixPart} with $A = a \otimes \nu$ for some $\nu \in \bbS^{n-1}$). Applying this to the right-hand side of \eqref{eq:ElementaryEst:Pf1} with $a = 2(w-x)$ and $b = |w|^2 - |x|^2$, we obtain \eqref{eq:KeyElementaryEstimate}.
\end{proof}

\begin{theorem}\label{thm:GeoRigid:Lp:Ball}
	Let $\Omega \subset \bbR^n$ be a bounded Lipschitz domain and let $p \in [1,\infty)$. 
	Then there exists a constant $C = C(n,p,\Omega) > 0$ such that the following holds:
	For $x_0 \in \overline{\Omega}$ and $r \in (0,\diam\Omega)$ denote the set $\Omega_{x_0,r} := \Omega \cap B(x_0,r)$. For all $v \in L^p(\Omega;\bbR^n)$ there exists a rigid motion $\psi \in \scR$ such that
	\begin{equation}\label{eq:GeoRigid:Lp:Ball}
		\int_{\Omega_{x_0,r}} |v(x) - \psi(x)|^p \, \rmd x \leq C r^{-n} \int_{\Omega_{x_0,r}} \int_{\Omega_{x_0,r}}  |x-y|^p |\rme[v](x,y)|^p \, \rmd y \, \rmd x.
	\end{equation}
\end{theorem}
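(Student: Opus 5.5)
We would prove the estimate \eqref{eq:GeoRigid:Lp:Ball} on $\Omega_{x_0,r}$ by reducing to \cref{prop:Scaled:NoDiff} on a single interior ball, and then transporting the estimate from that small ball to all of $\Omega_{x_0,r}$ with \cref{lma:KeyElementaryEstimate}. Throughout we write $F := \int_{\Omega_{x_0,r}}\int_{\Omega_{x_0,r}} \big||v(x)-v(y)|-|x-y|\big|^p \,\rmd y\,\rmd x$, which equals the right-hand side integral in \eqref{eq:GeoRigid:Lp:Ball}.

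\textbf{Step 1: an interior ball.} Since $\Omega$ is a bounded Lipschitz domain, it satisfies a uniform interior cone condition. Hence there is $c = c(\Omega) \in (0,1)$ such that for every $x_0 \in \overline\Omega$ and $r \in (0,\diam\Omega)$ there is a point $z$ with $B(z, cr) \subset \Omega_{x_0,r}$. For $r$ comparable to $\diam\Omega$ this uses only boundedness, possibly after shrinking $c$. Set $\rho := cr/M$, with $M = M(n)$ from \cref{prop:Scaled:NoDiff}. Applying that proposition on $B(z,\rho)$, with $B(z,M\rho) \subset \Omega_{x_0,r}$, yields $\psi \in \scR$ with
\begin{equation*}
\int_{B(z,\rho)} |v-\psi|^p \,\rmd y \le C r^{-n} F .
\end{equation*}
Here we used that the double integral over $B(z,M\rho)$ is dominated by $F$.

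\textbf{Step 2: transport to all of $\Omega_{x_0,r}$.} Fix $x \in \Omega_{x_0,r}$, and apply \cref{lma:KeyElementaryEstimate} with $w = v(x)$ and the point $\psi(x)$. Note that $|\psi(x)-\psi(y)| = |x-y|$ and $\psi(x) \in \psi(B(z,2r))$. Since $\psi$ is an isometry, it is cleaner to apply the lemma to the pulled-back vector $\psi^{-1}(v(x))$ and the point $x \in B(z,2r)$. The lemma requires $x \in B(z,2R)$ when averaging over $B(z,\alpha R)$, so we take $R = r$ and $\alpha = \rho/r = c/M$, which depends only on $n$ and $\Omega$. This gives
\begin{equation*}
|v(x)-\psi(x)|^p \le C r^{-n}\int_{B(z,\rho)} \big| |v(x)-\psi(y)| - |x-y| \big|^p\,\rmd y .
\end{equation*}
By the triangle inequality,
\begin{equation*}
\big||v(x)-\psi(y)|-|x-y|\big| \le \big||v(x)-v(y)|-|x-y|\big| + |v(y)-\psi(y)| .
\end{equation*}
Integrating over $x \in \Omega_{x_0,r}$ then gives
\begin{equation*}
\int_{\Omega_{x_0,r}}|v-\psi|^p \le C r^{-n}\Big( F + |\Omega_{x_0,r}| \int_{B(z,\rho)}|v-\psi|^p \Big).
\end{equation*}
Because $|\Omega_{x_0,r}| \le C r^n$, the second term is at most $C\int_{B(z,\rho)}|v-\psi|^p \le C r^{-n}F$ by Step 1. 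This closes the estimate.

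\textbf{Main obstacle.} The delicate point is uniformity in $(x_0,r)$. We need an interior ball of radius comparable to $r$ inside $\Omega\cap B(x_0,r)$, with a constant independent of $x_0 \in \overline\Omega$ and $r < \diam\Omega$, so that $\alpha$ in \cref{lma:KeyElementaryEstimate} is fixed. I would get this from the Lipschitz graph description near the boundary, via a cone of fixed height and aperture at each boundary point, combined with compactness of $\overline\Omega$. The remaining steps only need to keep the constants independent of $v$ and of the ball.
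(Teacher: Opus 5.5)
Your proposal is correct and follows essentially the same route as the paper. Both arguments take an interior ball $B(z,cr)\subset\Omega_{x_0,r}$ from the uniform interior cone condition and apply \cref{prop:Scaled:NoDiff} on it. Both then apply \cref{lma:KeyElementaryEstimate} with $\alpha=c/M$ to the pulled-back field $\psi^{-1}\circ v$ (the paper's $\wt{v}=A^{-1}v-A^{-1}b$), and close the estimate using the triangle inequality and $|\Omega_{x_0,r}|\le C r^n$.
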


\begin{proof}
	We first note the following:
	\begin{equation}\label{eq:Lipschitz:ZoomIn}
		\begin{gathered}
			\text{There exists a constant } \kappa = \kappa(\Omega) > 0 \text{ such that, } \forall x_0 \in \overline{\Omega} \text{ and } r \in (0,2\diam\Omega), \\
			B_{x_0,r} \text{ contains a ball of radius } \kappa r. \text{ Moreover, } B(z,\kappa r) \subset B_{x_0,r} \subset B(z,2 r).
		\end{gathered}
	\end{equation}
	This is a consequence of the uniform interior cone condition satisfied by bounded Lipschitz domains; see \cite{MengeshaConsistency} for the details of the argument.
	
	We apply \cref{prop:Scaled:NoDiff} on the set $B(z,\kappa r)$ as given in \eqref{eq:Lipschitz:ZoomIn} to obtain a rigid motion $\psi$ such that
	\begin{equation}\label{eq:ScaledLipschitz:Pf1}
		\int_{B(z,\kappa r / M)} |v(x) - \psi(x)|^p \, \rmd x \leq C r^{-n} \int_{B(z,\kappa r)} \int_{B(z,\kappa r)} |x-y|^p |\rme[v](x,y)|^p \, \rmd y \, \rmd x.
	\end{equation}
	Define $\wt{v}(x) = A^{-1} v(x) - A^{-1}b$, where $\psi(x) = Ax + b$; by \eqref{eq:KeyElementaryEstimate} with $w = \wt{v}(x)$ and $\alpha = \kappa/M$  
	\begin{equation*}
		\begin{split}
			|\wt{v}(x) - x|^p &\leq C r^{-n} \int_{B(z,\kappa r/M)} \big| |\wt{v}(x) - y| - |x-y| \big|^p \, \rmd y \\
			&\leq C r^{-n} \int_{B(z,\kappa r/M)} \big| |\wt{v}(x) - \wt{v}(y)| - |x-y| \big|^p \, \rmd y + C r^{-n} \int_{B(z,\kappa r/M)} |\wt{v}(y) - y|^p \, \rmd y \\
			&= C r^{-n} \int_{B(z,\kappa r/M)} \big| |v(x) - v(y)| - |x-y| \big|^p \, \rmd y + C r^{-n} \int_{B(z,\kappa r/M)} |v(y) - \psi(y)|^p \, \rmd y \\
		\end{split}
	\end{equation*}
	where the equality follows from the fact that $A^T A = I$. The left-hand side also becomes $|v(x) - \psi(x)|^p$. The inequality is valid for all $x \in \Omega_{x_0,r}$; integrating over $x$ and using that $|\Omega_{x_0,r}| \leq C(n) r^n$
	\begin{equation*}
		\vnorm{v - \psi}_{L^p(\Omega_{x_0,r})}^p \leq C r^{-n} \int_{\Omega_{x_0,r}} \int_{\Omega_{x_0,r}} |x-y|^p |\rme[v](x,y)|^p \, \rmd y \, \rmd x + C \vnorm{v - \psi}_{L^p(B(z,\kappa r/M))}^p.
	\end{equation*}
	The result follows by using \eqref{eq:ScaledLipschitz:Pf1} to estimate the $L^p$ norm on the right-hand side.
\end{proof}

\begin{lemma}\label{lma:LpEst:Annulus}
	In the setting of \cref{thm:GeoRigid:Lp:Ball}, there exists $\lambda \in (0,1)$ depending only on $n$, $p$, and $\Omega$ such that
	\begin{equation}
		\int_{\Omega_{x_0,r}} |v(x) - \psi(x)|^p \, \rmd x \leq C r^{-n} \int_{\Omega_{x_0,r}} \int_{\Omega_{x_0,r}} \mathds{1}_{ \{ \lambda r < |x-y| \leq 2 r \} } |x-y|^p |\rme[v](x,y)|^p \, \rmd y \, \rmd x.
	\end{equation}
\end{lemma}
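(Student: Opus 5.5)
The plan is to rerun the proofs of \cref{prop:Scaled:NoDiff} and \cref{thm:GeoRigid:Lp:Ball}, checking at each step that every pair $(x,y)$ fed into the strain integral satisfies $|x-y|\ge \lambda r$. The upper bound $|x-y|\le 2r$ is automatic, since $x,y\in B(x_0,r)$. So the only work is to make sure no nearly coincident pairs are used.

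\emph{Step 1: annular version of \cref{prop:Scaled:NoDiff}.} In the unscaled proof, shrink the auxiliary ball $B''=B(0,\kappa)$ to $\widetilde B:=B(0,\kappa r_0/2)$. We may assume $r_0$ is small, say $r_0<1/4$, since decreasing $r_0$ in \cref{lma:GeometricLemma} is harmless.
\begin{itemize}
\item For $i\neq j$ with $x_i,x_j$ in the balls $B(\pm\kappa\theta_i,\kappa r_0)$, we get $|x_i-x_j|\ge \kappa(\sqrt2-2r_0)\ge c\kappa$.
\item For $y\in\widetilde B$ we get $|x_i-y|\ge \kappa(1-r_0)-\kappa r_0/2\ge c\kappa$.
\item Pairs with $i=j$ contribute nothing.
\end{itemize}
Hence the Chebyshev bounds for $\cM^\lambda_{ij}$ and $\cN^\lambda_i$ only involve the integrand restricted to $\{c\kappa\le|x-y|\le 2\}$. \cref{lma:GeometricLemma} still applies for $z\in\widetilde B\subset B(0,\kappa r_0)$. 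We obtain the estimate \eqref{eq:Scaled:NoDiff:Pf1} on $B(0,\kappa r_0/2)$, with the right-hand side restricted to $|x-y|\ge c\kappa$. Rescaling gives, for some $M'(n)$ and $c'(n)$,
\[
\int_{B(z,\rho/M')}|v-\psi|^p \le C\rho^{-n}\iint_{B(z,\rho)^2}\mathds 1_{\{|x-y|\ge c'\rho\}}\,|x-y|^p|\rme[v]|^p .
\]

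\emph{Step 2: extension to $\Omega_{x_0,r}$.} Apply Step 1 with $\rho=\kappa r$ on the ball $B(z,\kappa r)\subset\Omega_{x_0,r}$ from \eqref{eq:Lipschitz:ZoomIn}, obtaining $\psi$. Set $s=\kappa r/M'$ and inside $B(z,s)$ take the two balls $B_\pm:=B(z\pm \tfrac s2 e_1,\tfrac s8)$. Their centers are at distance $s$ apart, so every $x\in\Omega_{x_0,r}$ satisfies $|x-y|\ge s/4$ for all $y$ in at least one of $B_\pm$ (call it $B_x$). Indeed, if $x$ is within $s/4$ of a point of $B_+$, then it is at distance at least $s-s/8-s/4-s/8=s/2$ from $B_-$.

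We then run the argument of \cref{thm:GeoRigid:Lp:Ball} with $B_x$ in place of $B(z,\kappa r/M)$. Apply \cref{lma:KeyElementaryEstimate} with center $z\pm\tfrac s2e_1$, radius $r$ (so that $x\in B(\cdot,2r)$ holds) and $\alpha=s/(8r)=\kappa/(8M')$, which depends only on $n$ and $\Omega$. This gives
\[
|v(x)-\psi(x)|^p\le Cr^{-n}\int_{B_x}\big||v(x)-v(y)|-|x-y|\big|^p\,dy+Cr^{-n}\int_{B(z,s)}|v-\psi|^p .
\]
In the first integral, $|x-y|\ge s/4$. Integrating over $x\in\Omega_{x_0,r}$ and bounding the second term by Step 1 (whose pairs satisfy $|x-y|\ge c'\kappa r$) gives the claim with
\[
\lambda=\min\{c'\kappa,\;\kappa/(4M')\}.
\]

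\emph{Main obstacle.} The main obstacle is Step 1: verifying that \cref{lma:GeometricLemma} can be used with test points $z$ confined to a ball that stays uniformly away from the frame points $x_i$. Also, all distance lower bounds must be kept explicit so that $\lambda$ depends only on $n$, $p$ and $\Omega$. If shrinking $B''$ causes trouble, the fallback is the same two-ball trick: use two separated subballs of $B(0,\kappa r_0)$ and choose for each $z$ the farther frame configuration.
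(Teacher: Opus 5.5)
Your proof is correct, but it follows a genuinely different route from the paper's.

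\textbf{The paper's route.} The paper does not reopen the proofs of \cref{prop:Scaled:NoDiff} and \cref{thm:GeoRigid:Lp:Ball}. It takes the rigid motion $\psi$ from \eqref{eq:GeoRigid:Lp:Ball} as a black box and removes the short-range pairs by absorption. Since $\psi$ is an isometry,
\[
|x-y|\,|\rme[v](x,y)| = \bigl||v(x)-v(y)|-|\psi(x)-\psi(y)|\bigr| \le |v(x)-\psi(x)|+|v(y)-\psi(y)|.
\]
Hence the part of the right-hand side of \eqref{eq:GeoRigid:Lp:Ball} with $|x-y|\le\lambda r$ is at most $C r^{-n}\, 2^p|B(0,1)|(\lambda r)^n\|v-\psi\|^p_{L^p(\Omega_{x_0,r})}$. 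For $\lambda$ small this is absorbed into the left-hand side, which is finite because $v\in L^p$. That takes a few lines, uses nothing beyond the isometry property of $\psi$, and applies to every rigid motion satisfying \eqref{eq:GeoRigid:Lp:Ball}.

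\textbf{Your route.} You instead verify that the construction only ever samples well-separated pairs, and each check holds:
\begin{itemize}
\item the frame points in $A$ are pairwise at distance at least $\kappa(\sqrt2-2r_0)$;
\item the shrunken test ball $B(0,\kappa r_0/2)$ stays at distance at least $\kappa(1-3r_0/2)$ from them;
\item decreasing $r_0$ in \cref{lma:GeometricLemma} is indeed harmless;
\item in Step 2, the two-ball selection $B_x$, with centers $z\pm\tfrac s2 e_1$, radius $r$ and $\alpha=\kappa/(8M')$ in \cref{lma:KeyElementaryEstimate}, keeps every pair at distance at least $s/4$ while still satisfying $x\in B(\cdot,2r)$.
\end{itemize}

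\textbf{Comparison.} Your approach costs considerably more bookkeeping. It also gives the annular bound only for the particular $\psi$ you construct, not for any $\psi$ satisfying \eqref{eq:GeoRigid:Lp:Ball}. In return, your $\lambda$ is explicit and independent of $p$. Your Step 1 also restricts the test points $z$ to $B(0,\kappa r_0)$, which is where \cref{lma:GeometricLemma} actually applies. The paper's proof of \cref{prop:Scaled:NoDiff} integrates over $B''=B(0,\kappa)$, which is larger than that ball and in fact intersects the balls $B(\pm\kappa\theta_i,\kappa r_0)$.
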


\begin{proof}
	Starting from \eqref{eq:GeoRigid:Lp:Ball},
	write $1 = \mathds{1}_{  \{ |x-y| \leq \lambda r \} } +  \mathds{1}_{  \{ \lambda r < |x-y| \leq 2 r \} }$ and split the right-hand side of \eqref{eq:GeoRigid:Lp:Ball} accordingly. The first term will be absorbed on the left-hand side. 
	To this end, since $\psi$ is a rigid motion, we have $|x-y| |\rme[v](x,y)| = ||v(x)-v(y)| - |\psi(x-y)| | \leq |v(x) - \psi(x)| + |v(y) - \psi(y)|$, and so
	\begin{equation*}
		\int_{\Omega_{x_0,r}} \int_{\Omega_{x_0,r}} \mathds{1}_{ \{ |x-y| \leq \lambda r \} } |x-y|^p |\rme[v](x,y)|^p \, \rmd y \, \rmd x \leq 2^p \cL^n(B(0,1)) (\lambda r)^n \int_{\Omega_{x_0,r}} |v(x)-\psi(x)|^p \, \rmd x.
	\end{equation*}
	For the constant $C$ in \eqref{eq:GeoRigid:Lp:Ball}, choose $\lambda > 0$ so that $2^p C \cL^n(B(0,1)) \lambda^n < 1/2$. The result follows.
\end{proof}

Define for a bounded Lipschitz domain $\Omega \subset \bbR^n$, $\lambda$ as in \cref{lma:LpEst:Annulus}, $p \in [1,\infty)$, $r \in (0,\diam(\Omega))$, $v : \Omega \to \bbR^n$ and any measurable set $G \subseteq \Omega$
$$
\begin{aligned}
	\cA_r(v;G)&:=\int_G \int_G
	\mathds{1}_{\{r/2<|x-y| \leq r\}}|v(x)-v(y)|^p\, \rmd y \,\rmd x,\\
	\cE_r(v;G)&:=\int_G \int_G
	\mathds{1}_{\{\lambda r <|x-y| \leq 2 r \}} \big| |v(x) - v(y)| - |x-y| \big|^p \, \rmd y \, \rmd x.
\end{aligned}
$$
We will use $\cE_r$ to estimate $\cA_r$ at different scales. However, because \cref{thm:Intro:GeometricRigidity} is a genuinely nonlocal result with no local analogue, we cannot expect an estimate of the type $\cA_r(v-\psi;\Omega) \leq C \cE_r(v;\Omega)$ to hold. Indeed, dividing both sides by $r^{n+p}$ and Taylor expanding $v$, one would obtain a local rigidity estimate for $\grad v$ near $\rmO(n)$, which is false. Therefore any bound of $\cA_r$ will need to involve all scales of the strain-type energy, not just $\cE_r$ at the single scale $r$. The following theorem makes this precise.

\begin{theorem}\label{thm:GeoRigid:Lp:Annulus}
	There exists a constant $C = C(n,p,\Omega) > 0$ such that the following holds: for every $v \in L^p(\Omega; \bbR^n)$ there exists $\psi \in \scR$ such that
	\begin{equation*}
		\cA_{r_k}(v-\psi;\Omega)^{1/p} \leq C \sum_{j=0}^k \left( \frac{2^{-k(n+p)}}{2^{-j(n+p)}} \cE_{r_j}(v;\Omega)  \right)^{1/p}
	\end{equation*}
	for all $k \in \bbN_0$, where $r_k = 2^{-k} \diam(\Omega)$.
\end{theorem}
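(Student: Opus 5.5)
We will prove the estimate by covering $\Omega$ at each scale $r_k$, applying \cref{lma:LpEst:Annulus} on each piece to get a local rigid motion $\psi_{k,x_0}$, and then comparing these rigid motions across neighbouring pieces and across consecutive scales. This comparison is what produces the geometric sum over $j \le k$.

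\textbf{Step 1: local rigid motions at a single scale.} For each $k$ we fix a maximal $r_k/4$-separated set $\{x^k_i\}\subset\overline\Omega$. The sets $\Omega_{x^k_i,r_k}$ then cover $\Omega$ with overlap bounded by $C(n)$. Every pair $x,y$ with $|x-y|\le r_k$ lies in some $\Omega_{x^k_i,2r_k}$. On each $\Omega_{x^k_i,2r_k}$, \cref{lma:LpEst:Annulus} gives $\psi_{k,i}\in\scR$ with
\[
\|v-\psi_{k,i}\|^p_{L^p(\Omega_{x^k_i,2r_k})}\le C r_k^{-n}\,\cE_{2r_k}(v;\Omega_{x^k_i,2r_k}).
\]
Note that $\cE_{2r_k}$ involves only distances in $(2\lambda r_k,4r_k]$, so it is dominated by a bounded number of $\cE_{r_j}$ with $j\in\{k-2,\dots,k+c(\lambda)\}$. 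We keep track of this index shift; it is harmless because the weight ratio between neighbouring scales is a constant.

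\textbf{Step 2: telescoping to a single global rigid motion.} At scale $k=0$ the set $\Omega_{x_0,2r_0}$ is all of $\Omega$, so we obtain one global $\psi=\psi_0$ with $\|v-\psi\|_{L^p(\Omega)}^p\le C\,\cE_{r_0}$-type terms. For a pair $|x-y|\in(r_k/2,r_k]$ lying in $\Omega_{x^k_i,2r_k}$, we write
\[
|(v-\psi)(x)-(v-\psi)(y)|\le |(v-\psi_{k,i})(x)|+|(v-\psi_{k,i})(y)|+|(\psi_{k,i}-\psi)(x)-(\psi_{k,i}-\psi)(y)|.
\]
The first two terms, integrated over the pair set, give $r_k^n\|v-\psi_{k,i}\|^p_{L^p}\lesssim \cE(\text{scale }k)$, which is the $j=k$ term. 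The last term is affine with matrix $A_{k,i}-A$, so its contribution is at most $r_k^{n}\cdot r_k^{n+p}|A_{k,i}-A|^p$ summed over $i$.

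\textbf{Step 3: comparing matrices along a chain of balls (the main obstacle).} We choose a chain $\Omega_{x^k_i,2r_k}\subset \Omega_{x^{k-1}_{i'},2r_{k-1}}\subset\cdots\subset\Omega$. By \eqref{eq:Lipschitz:ZoomIn}, each set in the chain contains a ball of radius comparable to its scale, so \eqref{eq:AffineMapEquivalence:Rescaled:MatrixPart} gives
\[
|A_{j}-A_{j-1}|\le C r_j^{-n/p-1}\|\psi_j-\psi_{j-1}\|_{L^p(\Omega_{j})}\le C r_j^{-n/p-1}\left(\|v-\psi_j\|_{L^p(\Omega_j)}+\|v-\psi_{j-1}\|_{L^p(\Omega_{j-1})}\right).
\]
Hence $|A_{k,i}-A|\le C\sum_{j\le k} r_j^{-n/p-1}\bigl(r_j^{-n}\cE_{r_j}(v;\Omega_j)\bigr)^{1/p}$. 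Multiplying by $r_k^{(2n+p)/p}$ and using $r_k/r_j=2^{j-k}$, the $j$-th summand at the local level carries the factor $2^{-(k-j)(n+p)/p}$ times $r_k^{n/p}r_j^{-n/p}$.

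The delicate point is the summation over $i$. A fixed coarse set $\Omega_j$ is an ancestor of about $2^{n(k-j)}$ fine sets. We use Minkowski's inequality in $\ell^p$ over $i$, so that $\bigl(\sum_i|\sum_j a_{ij}|^p\bigr)^{1/p}\le\sum_j(\sum_i a_{ij}^p)^{1/p}$. Each coarse term is then counted $\sim 2^{n(k-j)}$ times, and this multiplicity exactly cancels the factor $r_k^{n}r_j^{-n}=2^{-n(k-j)}$. The leftover is $2^{-(k-j)(n+p)}\cE_{r_j}(v;\Omega)$ inside the $p$-th root, matching the claimed bound after bounded overlap.

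Finally we shift the indices from Step 1. Each $\cE_{2r_j}$ is split into finitely many $\cE_{r_{j'}}$ with $j'$ near $j$. Terms with $j'>k$ can occur only through the bounded shift $c(\lambda)$, and we handle them by replacing $\lambda$ in \cref{lma:LpEst:Annulus} with a ball radius chosen so that the annulus sits inside $(\lambda r_j,2r_j]$. This index bookkeeping together with the multiplicity count is where we expect the most care to be needed.
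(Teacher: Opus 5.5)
Your argument is correct, and it takes a genuinely different route from the paper.

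\textbf{How the paper argues.} The paper glues the local rigid motions at each dyadic scale into smooth fields $v_j=\sum_{U\in\cG_j}\varphi_U\psi_U$ using partitions of unity. It then telescopes $v-\psi_0=(v-v_k)+\sum_{j<k}(v_{j+1}-v_j)$ inside $\cA_{r_k}^{1/p}$. Each difference is bounded via the Bourgain--Brezis--Mironescu inequality, $\cA_{r_k}(v_{j+1}-v_j)\lesssim r_k^{n+p}\|\nabla(v_{j+1}-v_j)\|_{L^p}^p$. Finally, $\nabla(v_{j+1}-v_j)$ is controlled by comparing $\psi_U$ and $\psi_{\widetilde U}$ on overlaps of consecutive covers. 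Because of the terms $(\psi_U-\psi_{\widetilde U})\otimes\nabla\varphi$, this comparison needs the full affine difference, translations included.

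\textbf{How your argument differs.} You never leave the level of pairs. For each pair at scale $r_k$ you split off the local error of a single set and the affine piece $(A_{k,i}-A)(x-y)$, so only matrix differences enter. You telescope those along a nested ancestor chain using only \eqref{eq:AffineMapEquivalence:Rescaled:MatrixPart} and \eqref{eq:Lipschitz:ZoomIn}. The sum over $i$ is handled by Minkowski's inequality in $\ell^p$ together with the count of at most $C(n)2^{n(k-j)}$ descendants. That count is valid because your balls are genuinely nested, $B(x^m_i,2r_m)\subset B(x^{m-1}_{i'},2r_{m-1})$, and the centres are $r_k/4$-separated.

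\textbf{What each route buys.} Yours is more elementary and self-contained: no smooth gluing and no BBM inequality. The paper's construction yields, as a by-product, global smooth approximants $v_j$ with $\|v-v_j\|_{L^p}$ controlled at every scale, which is a reusable multiscale decomposition.

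\textbf{Index bookkeeping.} Your worry here is unfounded, and the fix sketched in your last paragraph (adjusting $\lambda$) is not needed. Since $2r_k=r_{k-1}$, applying \cref{lma:LpEst:Annulus} on $\Omega_{x^k_i,2r_k}$ produces exactly $\cE_{r_{k-1}}(v;\Omega_{x^k_i,2r_k})$ for $k\ge 1$. At $k=0$, all distances in $\Omega$ are at most $r_0$, so $\cE_{2r_0}(v;\Omega)\le\cE_{r_0}(v;\Omega)$. Hence only coarser indices appear, and the shift $j\mapsto j-1$ costs only a factor $2^{(n+p)/p}$ in the weights. No term with index larger than $k$ ever arises.

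In fact, your choice of radius $2r_k=r_{k-1}$ makes the scale bookkeeping exact. This is cleaner than applying the lemma on enlarged balls of radius $3r_j/2$, whose annulus is not literally that of $\cE_{r_j}$.

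One minor technicality: the radii $2r_0$ and $2r_1$ lie at or beyond the stated range $r<\diam\Omega$ of \cref{lma:LpEst:Annulus}. This is harmless, since the corresponding sets equal $\Omega$ and \eqref{eq:Lipschitz:ZoomIn} holds up to $2\diam\Omega$. The paper uses the same extension at $r=\diam\Omega$.
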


\begin{proof}
	We choose a sequence of open covers $\cG_k$ of $\Omega$ as follows:
	For $k = 0$ set $\cG_0 = \{ \Omega \}$, and for $k \geq 1$
	choose a finite subcover $\{ B(x_{\ell},r_k/8) \}_{\ell=1}^K$ of $\overline{\Omega}$, where each $x_{\ell} \in \Omega$, and such that $B(x_\ell,r_k/16) \cap B(x_m,r_k/16) = \emptyset$ for any $\ell$, $m \in \{ 1, \ldots K \}$. Define $\cG_k := \{ B(z,r_k/2) \, : \, z \in \{ x_1, \ldots, x_K \} \}$. The dilations are denoted $R U = B(z,R r_k/2)$ for $U \in \cG_k$. The following properties of the $\cG_k$ are readily verified:
	\begin{itemize}
		\item[\textbf{i)}] For all $k \in \bbN_0$, the dilated collection $\{ 3 U \, : \, U \in \cG_K \}$ is locally finite, and there exists $C = C(n)$ such that each point of $\Omega$ is contained in at most $C$ of the elements of $\cG_k$.
		
		\item[\textbf{ii)}] $\{ (x,y) \in \Omega \times \Omega \, : \, r_{k+1} < |x-y| < r_k \} \subset \bigcup_{U \in \cG_k}  3 U \times 3 U$.
		
		\item[\textbf{iii)}] If $U \in \cG_{k+1}$ and $\wt{U} \in \cG_k$ satisfy $U \cap \wt{U} \neq \emptyset$, then $B(z,r_k/4) \subset 3 U \cap 3 \wt{U}$, where $z$ is the center of $U$.
		
		\item[\textbf{iv)}] There exists a constant $C = C(n) > 0$ such that for all $k \in \bbN_0$ the following holds: for any $U \in \cG_{k+1}$ and any $\wt{U} \in \cG_{k}$, the collections $\{ \wt{V} \in \cG_k \, : \, \wt{V} \cap U \neq \emptyset \}$ and $\{ V \in \cG_{k+1} \, : \, V \cap \wt{U} \neq \emptyset \}$ have cardinality at most $C$.
	\end{itemize}
	These covers will be used to define successive approximations up to scale $r_k$. 
	First, let $\psi_0 \in \scR$ be as in \cref{lma:LpEst:Annulus} with $r = \diam(\Omega)$ and some $x_0 \in \Omega$, so that
	\begin{equation}
		\vnorm{v - \psi_0}_{L^p(\Omega)}^p \leq C(n,p,\Omega) \int_{\Omega} \int_\Omega \mathds{1}_{ \{ \lambda \diam(\Omega) < |x-y| \} } |x-y| |\rme[v](x,y)|^p \, \rmd y \, \rmd x.
	\end{equation}
	Next, for each $j \in \{ 1, \ldots, k \}$, and each $U \in \cG_j$, apply  \cref{lma:LpEst:Annulus} to each set $\Omega \cap 3 U$ to obtain $\psi_U = A_U x + b_U \in \scR$ satisfying
	\begin{equation}\label{eq:GeoRigid:Lp:Annulus:Pf1}
		\vnorm{v-\psi_U}_{ L^p(\Omega \cap 3 U) }^p \leq C(n,p,\Omega) r_j^{-n} \cE_{r_j}(v;\Omega \cap 3 U). 
	\end{equation}
	Now for each $j \geq 1$, consider a $C^\infty$ partition of unity subordinate to the cover $\cG_j$, denoted as $\{ \varphi_U \}_{U \in \cG_j}$, with $|\grad \varphi_U| \leq C(n) r_j^{-1}$ for all $U \in \cG_j$ and all $j$. Define the functions $v_j$, $j = 0,\ldots k$, as $v_0 = \psi_0$ and $v_j(x) = \sum_{ U \in \cG_j } \varphi_U(x) \psi_U(x)$ for $j \geq 1$. 
	Then each $v_j \in C^\infty(\overline{\Omega})$, and by the local finiteness of the $\cG_j$,  \eqref{eq:GeoRigid:Lp:Annulus:Pf1} and property ii) we have
	\begin{equation}\label{eq:GeoRigid:Lp:Annulus:Pf2}
		\vnorm{v - v_j}_{L^p(\Omega)} \leq C(n,p,\Omega) r_j^{-n} \cE_{r_j}(v;\Omega).
	\end{equation}
	
	Now we begin to estimate $\cA_{r_k}$. Writing
	\begin{equation*}
		\begin{split}
			v(x) - v(y) - (\psi_0(x) -\psi_0(y)) &= v(x) - v(y) -  (v_k(x) - v_k(y)) \\
			&\qquad + \sum_{j=0}^{k-1} (v_{j+1}(x-y) - v_j(x-y)),
		\end{split}
	\end{equation*}
	we have
	\begin{equation}\label{eq:GeoRigid:Lp:Annulus:Pf3}
		\begin{split}
			\cA_{r_k}(v-\psi_0;\Omega)^{1/p} 
			&\leq \cA_{r_k}(v-v_k;\Omega)^{1/p} + \sum_{j=0}^{k-1} \cA_{r_k}(v_{j+1}-v_j;\Omega)^{1/p}.
		\end{split}
	\end{equation}
	First, by \eqref{eq:GeoRigid:Lp:Annulus:Pf2} 
	\begin{equation*}
		\cA_{r_k}(v-v_k;\Omega) \leq 2^p \int_{\Omega} |v(x) - v_k(x)|^p \, \rmd x \int_{ \{ |z| < r_k \} }\, \rmd z \leq C \cE_{r_k}(v;\Omega).
	\end{equation*}
	Second, by \cite[Theorem 1]{Bourgain2001Another}
	\begin{equation*}
		\begin{split}
			\cA_{r_k}(v_{j+1} - v_j;\Omega) &\leq C(p,\Omega) \int_{  \{ |z| < r_k  \} } |z|^p \, \rmd z \int_\Omega |\grad(v_{j+1} - v_j)|^p \, \rmd x \\
			&\leq C r_k^{n+p}  \vnorm{\grad(v_{j+1} - v_j)}_{L^p(\Omega)}^p.
		\end{split}
	\end{equation*}
	Substitute these two bounds into \eqref{eq:GeoRigid:Lp:Annulus:Pf3}; if we additionally show that
	\begin{equation}\label{eq:GeoRigid:Lp:Annulus:Pf4}
		\vnorm{\grad(v_{j+1} - v_j)}_{L^p(\Omega)}^p \leq C(n,p,\Omega) r_j^{-n-p} \left( \cE_{r_{j+1}}(v;\Omega) + \cE_{r_{j}}(v;\Omega) \right), \quad j \in \{ 0, \ldots, k -1 \},
	\end{equation}
	then the theorem will be proved with $\psi = \psi_0$ after applying the inequality $(a+b)^{1/p} \leq a^{1/p} + b^{1/p}$ for $a$, $b \geq 0$.
	
	To this end, using that $\sum_{U \in \cG_{j+1}} \varphi_U = \sum_{\wt{U} \in \cG_{j}} \varphi_{\wt{U}} = 1$ we write\\ $v_{j+1} - v_j = \sum_{U \in \cG_{j+1}} \sum_{\wt{U} \in \cG_j} \varphi_U \varphi_{\wt{U}} ( \psi_U - \psi_{\wt{U}})$, and so
	\begin{equation*}
		\grad (v_{j+1} - v_j) = \sum_{U \in \cG_{j+1}} \sum_{\wt{U} \in \cG_j} ( \psi_U - \psi_{\wt{U}}) \otimes ( \varphi_U \grad \varphi_{\wt{U}} + \varphi_{\wt{U}} \grad \varphi_U ) + \varphi_U \varphi_{\wt{U}} ( A_U - A_{\wt{U}}).
	\end{equation*}
	Then using property iv) of the covers and the bounds $|\varphi_U| \leq 1$ and $|\grad \varphi_U| \leq C r_j^{-1}$,
	\begin{equation*}
		\vnorm{\grad(v_{j+1} - v_j)}_{L^p(\Omega)}^p \leq C(n,p) \sum_{ \wt{U} \in \cG_j } \sum_{\substack{U \in \cG_{j+1} \\ U \cap \wt{U} \neq \emptyset }} \int_{\Omega \cap \wt{U}} \frac{|\psi_U - \psi_{\wt{U}}|^p}{r_j^p} + |A_U - A_{\wt{U}}|^p \, \rmd x.
	\end{equation*}
	Let $z$ be the center of $U$; property iii) of $\cG_j$ implies that $B(z,r_j/4) \subset 3 U \cap 3 \wt{U}$. Then both $B(z,r_j/4)$ and $\wt{U}$ are contained in a ball of radius comparable to $r_j$, and the centers of all three are contained in $\Omega$. By an argument similar to that of \cref{lma:PolynomialsOnSets}, we can replace $\vnorm{\psi_U - \psi_{\wt{U}}}_{L^p(\Omega \cap \wt{U})}$ with $\vnorm{\psi_U - \psi_{\wt{U}}}_{L^p(\Omega \cap B(z,r_j/4) )}$ at the expense of a constant depending only on $p$, $n$, and $\Omega$.
	Additionally, by an argument similar to  \eqref{eq:AffineMapEquivalence:Rescaled:MatrixPart} we have $|A_U - A_{\wt{U}}|^p \leq C r^{-n-p} \vnorm{\psi_U - \psi_{\wt{U}}}_{L^p(\Omega \cap B(z,r_j/4) )}^p$. Therefore by \eqref{eq:GeoRigid:Lp:Annulus:Pf1}
	\begin{equation*}
		\begin{split}
			\vnorm{\grad(v_{j+1} - v_j)}_{L^p(\Omega)}^p &\leq C r_j^{-p} \sum_{ \wt{U} \in \cG_j } \sum_{\substack{U \in \cG_{j+1} \\ U \cap \wt{U} \neq \emptyset }}  \vnorm{\psi_U - \psi_{\wt{U}}}_{L^p(\Omega \cap B(z,r_j/4) )} \\
			&\leq C r_j^{-p} \sum_{ \wt{U} \in \cG_j } \sum_{\substack{U \in \cG_{j+1} \\ U \cap \wt{U} \neq \emptyset }}  \vnorm{v - \psi_U}_{L^p(\Omega \cap 3 U)} + \vnorm{v- \psi_{\wt{U}}}_{L^p(\Omega \cap 3 \wt{U}) } \\
			&\leq C r_j^{-n-p} \sum_{ \wt{U} \in \cG_j } \sum_{\substack{U \in \cG_{j+1} \\ U \cap \wt{U} \neq \emptyset }} \cE_{r_{j+1}}(v;\Omega \cap 3 U) + \cE_{r_{j}}(v;\Omega \cap 3 \wt{U}).
		\end{split}
	\end{equation*}
	To estimate the sums, we use property iv), Tonelli's theorem, and property i) to arrive at \eqref{eq:GeoRigid:Lp:Annulus:Pf4} as desired.
\end{proof}

\subsection{Rigidity for vector fields in the nonlocal space}

We begin with a Poincar\'e-type inequality.

\begin{theorem}\label{thm:GeometricRigidity:Poincare}
	With the assumptions of \cref{thm:Intro:GeometricRigidity}, there exist constants $C = C(n,p,\rho,\Omega) > 0$ and $\Lambda = \Lambda(n,p,\rho,\Omega) \in (0,r_0)$ such that the following holds: for all $v \in \cS^{\rho,p}(\Omega;\bbR^n)$ there exists $\psi \in \scR$ such that
	\begin{equation}
		\vnorm{v - \psi}_{L^p(\Omega)}^p \leq C \int_{\Omega} \int_{\Omega} \mathds{1}_{ \{ \Lambda < |x-y| \} }\rho(x-y) |x-y|^p |\rme[v](x,y)|^p \, \rmd y \, \rmd x.
	\end{equation}
\end{theorem}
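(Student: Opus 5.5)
We combine \cref{thm:GeoRigid:Lp:Ball} at the top scale with \cref{thm:GeoRigid:Lp:Annulus}, so that the strain at all small scales is bounded by the strain at scales larger than a fixed $\Lambda$. Fix a small $\Lambda$ and an integer $K$ with $r_K = 2^{-K}\diam\Omega$ of order $\Lambda$. Apply \cref{thm:GeoRigid:Lp:Ball} (through \cref{lma:LpEst:Annulus}) with $r = \diam\Omega$ to get $\psi \in \scR$ with
\begin{equation*}
\vnorm{v-\psi}_{L^p(\Omega)}^p \leq C \int_\Omega\int_\Omega |x-y|^p|\rme[v](x,y)|^p \,\rmd y\,\rmd x .
\end{equation*}
The contribution from $|x-y|>\Lambda$ is already of the desired form: since $\rho$ is nonincreasing and $|x-y|\le \diam\Omega$, we have $1 \le \rho(x-y)/\rho(\diam\Omega)$ there, assuming $\rho(\diam\Omega)>0$ (otherwise one works with a slightly smaller top scale). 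It remains to control the short-range part $\int\int_{\{|x-y|\le\Lambda\}}|x-y|^p|\rme[v]|^p$.

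For the short-range part, use again that $\psi$ is a rigid motion:
\begin{equation*}
|x-y||\rme[v](x,y)| \le |D[v-\psi](x,y)|\,|x-y| = |(v-\psi)(x)-(v-\psi)(y)| .
\end{equation*}
Decompose $\{|x-y|\le \Lambda\}$ into the dyadic annuli $r_{k+1}<|x-y|\le r_k$ with $k\ge K$. This gives
\begin{equation*}
\int\int_{\{|x-y|\le\Lambda\}} |x-y|^p|\rme[v]|^p \le \sum_{k\ge K} \cA_{r_k}(v-\psi';\Omega)
\end{equation*}
for any $\psi'\in\scR$. Take $\psi'$ to be the rigid motion from \cref{thm:GeoRigid:Lp:Annulus}; it is $\psi_0$ from \cref{lma:LpEst:Annulus}, so I would arrange the two to coincide. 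That theorem bounds $\cA_{r_k}(v-\psi_0)^{1/p}$ by $C\sum_{j\le k}2^{-(k-j)(n+p)/p}\cE_{r_j}(v)^{1/p}$. The weights $2^{-(k-j)(n+p)/p}$ are geometric, so Young/Hölder for discrete convolutions gives
\begin{equation*}
\sum_{k}\cA_{r_k} \lesssim \sum_j \cE_{r_j} ,
\end{equation*}
with the small-$j$ terms weighted by $2^{-(k-j)(n+p)}$.

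The main obstacle is that this only recovers the unweighted strain at all scales, which is circular. Instead, I would run this argument with $\rho$-weights and use \eqref{assu:GeomRigid}. Multiply $\cA_{r_k}$ by $\rho(r_k)$. Then, for $j\le k$,
\begin{equation*}
\rho(r_k)\,2^{-(k-j)(n+p)} \le \bigl(q\,2^{-(n+p)}\bigr)^{k-j}\rho(r_j),
\end{equation*}
which is summable because $q<2^{n+p}$. Using also $\rho(r_j)\lesssim\rho$ on the annulus of $\cE_{r_j}$ (monotonicity plus doubling), we get
\begin{equation*}
\sum_k \rho(r_k)\cA_{r_k}(v-\psi) \lesssim \sum_j \rho(r_j)\cE_{r_j}(v) \approx \int\int \rho\,|x-y|^p|\rme[v]|^p .
\end{equation*}
This weighted bound is \cref{thm:Intro:GeometricRigidity}, not yet the Poincaré estimate with a long-range right-hand side.

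To get the Poincaré inequality, I would use an absorption/contradiction scheme. Split the $\rho$-weighted strain into the short range $|x-y|<\Lambda$ and the long range $|x-y|\ge\Lambda$. The short range is bounded by $2^p\int\int_{|x-y|<\Lambda}\rho\,|x-y|^p\cdot$ (local $L^p$ of $v-\psi$)$\,\le\, 2^p\,\epsilon(\Lambda)\vnorm{v-\psi}_{L^p}^p$, where $\epsilon(\Lambda)=\int_{B_\Lambda}\rho|z|^p\to0$ by \eqref{assu:Kernel}. Combining this with the first display, which bounds $\vnorm{v-\psi}^p$ by a constant (involving $1/\rho(\diam\Omega)$) times the weighted strain, lets me absorb the short-range term for $\Lambda$ small. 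That yields the claim, and this absorption step with the choice of $\Lambda$ is where I expect the difficulty to lie.
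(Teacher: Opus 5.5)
There is a genuine gap, in two places.

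\textbf{The absorption step is wrong as written.} From $|x-y|\,|\rme[v](x,y)|\le|(v-\psi)(x)-(v-\psi)(y)|$ you get
\[
\int_\Omega\int_\Omega \mathds{1}_{\{|x-y|<\Lambda\}}\rho(x-y)|x-y|^p|\rme[v](x,y)|^p\,\rmd y\,\rmd x \le 2^p\vnorm{v-\psi}_{L^p(\Omega)}^p\int_{B(0,\Lambda)}\rho(z)\,\rmd z .
\]
There is no factor $|z|^p$ here; you used it twice. For the singular kernels of interest, such as $|z|^{-n-sp}$, we have $\int_{B(0,\Lambda)}\rho=+\infty$. The $\rho$-weighted short-range strain is simply not controlled by an $L^p$ norm.

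What does work is absorbing the \emph{unweighted} short-range strain with the small factor $2^p|B(0,\Lambda)|$. But that is exactly the proof of \cref{lma:LpEst:Annulus}, which you already invoked. With $r=\diam\Omega$ its right-hand side only involves $\{|x-y|>\lambda\diam\Omega\}$, so there is no short-range part to control. The detour through \cref{thm:GeoRigid:Lp:Annulus} and weighted dyadic sums is therefore unnecessary. It is also unavailable: that weighted bound is \cref{thm:Intro:GeometricRigidity}, whose proof for $r_0<\diam\Omega$ uses the present theorem.

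Consequently, when $\rho(\diam\Omega)>0$ the statement follows in one line from \cref{lma:LpEst:Annulus} and monotonicity of $\rho$. Shrinking $\Lambda$ below $r_0$ only enlarges the right-hand side, so that constraint costs nothing.

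\textbf{The case $\rho(\diam\Omega)=0$ is missing, and it is the real content of the theorem.} You dismiss the case where $\rho$ vanishes beyond some radius smaller than $\diam\Omega$ with ``work with a slightly smaller top scale.'' This class includes kernels like $\rho=\eta^{-n-p}\mathds{1}_{\{|z|<\eta\}}$, which the paper explicitly considers. A smaller top scale does not help. For such $\rho$ the weighted strain does not see pairs at distance beyond the support at all, so the long-range unweighted strain cannot be bounded by it. Rigidity at a scale $R$ where $\rho$ is still positive only produces \emph{local} rigid motions on pieces of size $R$.

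The missing idea is the paper's patching argument:
\begin{enumerate}
\item Choose $R<r_0$ so small that $\rho(2R)>0$.
\item Cover $\overline\Omega$ by finitely many $\Omega_i=\Omega\cap B(x_i,R)$, enumerated as a chain with consecutive overlaps.
\item Apply \cref{lma:LpEst:Annulus} on each $\Omega_i$. This gives $\psi_i$ whose error is controlled by the strain over $\{\lambda R<|x-y|<2R\}$, where $\rho\ge\rho(2R)>0$.
\item Compare consecutive motions on a ball of radius comparable to $R$ inside $\Omega_k\cap\Omega_{k+1}$. Via \cref{lma:PolynomialsOnSets} this gives $\vnorm{\psi_{k+1}-\psi_k}_{\mathrm{aff}}\lesssim\vnorm{v-\psi_k}_{L^p(\Omega_k)}+\vnorm{v-\psi_{k+1}}_{L^p(\Omega_{k+1})}$.
\item Sum along the chain. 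This yields a single rigid motion and the estimate with $\Lambda=\lambda R$.
\end{enumerate}
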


\begin{proof}
	Let $\psi_0 \in \scR$ be as in \cref{lma:LpEst:Annulus} with $r = \diam(\Omega)$ and some $x_0 \in \Omega$, so that
	\begin{equation}\label{eq:Poincare:Pf1}
		\vnorm{v - \psi_0}_{L^p(\Omega)}^p \leq C(n,p,\Omega) \int_{\Omega} \int_\Omega \mathds{1}_{ \{ \lambda \diam(\Omega) < |x-y| \} } |x-y| |\rme[v](x,y)|^p \, \rmd y \, \rmd x.
	\end{equation}
	Recall that a nonzero nonincreasing function is locally of bounded variation with at most countably many discontinuities. 
	Hence there exists $R < r_0$, where $r_0$ is as in \eqref{assu:GeomRigid}, such that $\rho(r) > 0$ for $r \leq R$.
	Cover $\overline{\Omega}$ with finitely sets of the form $\Omega_{x_i,R}$, $i = 1,\ldots,N$, $N$ depending only on $\Omega$ and $r$, and enumerated such that $\Omega_{x_i,R} \cap \Omega_{x_{i+1},R} \neq \emptyset$.
	Apply \cref{lma:LpEst:Annulus} to each set $\Omega_{x_i,R}$ to obtain a rigid motion $\psi_i$ such that
	\begin{equation}\label{eq:Poincare:Pf2}
		\begin{split}
			&\vnorm{v - \psi_i}_{L^p(\Omega_{x_i,R})}^p \\
			\leq & C(n,p,\Omega,R) \int_{\Omega_{x_i,R}} \int_{\Omega_{x_i,R}} \mathds{1}_{ \{ \lambda R < |x-y| < 2R \} } |x-y|^p |\rme[v](x,y)|^p \, \rmd y \, \rmd x \\
			\leq & \frac{C}{\rho(\lambda R)} \int_{\Omega_{x_i,R}} \int_{\Omega_{x_i,R}} \mathds{1}_{ \{ \lambda R < |x-y| < 2R \} } \rho(|x-y|) |x-y|^p |\rme[v](x,y)|^p \, \rmd y \, \rmd x,
		\end{split}
	\end{equation}
	where we also used that $\rho$ is nonincreasing.
	For brevity we write $\Omega_i = \Omega_{x_i,R}$ for $i = 1,\ldots, N$, and begin to estimate
	\begin{equation}\label{eq:Poincare:Pf3}
		\begin{split}
			\vnorm{v - \psi_0}_{L^p(\Omega)}^p \leq C(N) \sum_{i=1}^N \vnorm{v - \psi_0}_{L^p(\Omega_{i})}^p &\leq C \sum_{i=1}^N \vnorm{v - \psi_i}_{L^p(\Omega_{i})}^p + \vnorm{\psi_i-\psi_0}_{L^p(\Omega_{i})}^p \\
			&\leq C(R) \sum_{i=1}^N \vnorm{v - \psi_i}_{L^p(\Omega_{i})}^p + \vnorm{\psi_i-\psi_0}_{\mathrm{aff}}^p,
		\end{split}
	\end{equation}
	where in the last inequality we used an estimate similar to \eqref{eq:AffineMapEquivalence:Rescaled}.
	By \cref{lma:PolynomialsOnSets} applied on a ball $B_k \subset \Omega_{k} \cap \Omega_{k+1}$ and \eqref{eq:Poincare:Pf2}
	\begin{equation*}
		\begin{split}
			\vnorm{\psi_{k+1}-\psi_k}_{\mathrm{aff}} &\leq C(n,p,\Omega) \vnorm{\psi_{k+1}-\psi_k}_{L^p(B_k)} \\
			&\leq C \vnorm{ v - \psi_{k+1}}_{L^p(\Omega_{k+1})} + C \vnorm{ v - \psi_{k}}_{L^p(\Omega_k)} \\
			&\leq C(n,p,\rho,\Omega) \int_{\Omega} \int_{\Omega} \mathds{1}_{ \{ \lambda R < |x-y| < 2R \} } \rho(|x-y|) |x-y|^p |\rme[v](x,y)|^p \, \rmd y \, \rmd x,
		\end{split}
	\end{equation*}
	for each $k \in \{1, \ldots N - 1 \}$. Then by \eqref{eq:Poincare:Pf1}-\eqref{eq:Poincare:Pf2} we have for $\Lambda = \lambda \min \{ R, \diam(\Omega) \}$
	\begin{equation*}
		\begin{split}
			\vnorm{\psi_i-\psi_0}_{\mathrm{aff}}
			&\leq C \vnorm{v - \psi_0}_{L^p(\Omega)} + C \vnorm{v - \psi_1}_{L^p(\Omega_{1})}  + \sum_{k=1}^{i-1} \vnorm{\psi_{k+1}-\psi_k}_{\mathrm{aff}} \\
			&\leq C(n,p,\rho,\Omega) \int_{\Omega} \int_{\Omega} \mathds{1}_{ \{ \Lambda < |x-y| \} } \rho(|x-y|) |x-y|^p |\rme[v](x,y)|^p \, \rmd y \, \rmd x.
		\end{split}
	\end{equation*}
	Therefore using this estimate in \eqref{eq:Poincare:Pf3} and again using \eqref{eq:Poincare:Pf2} we obtain the result.
	
\end{proof}

\begin{proof}[Proof of \cref{thm:Intro:GeometricRigidity}]
	Observe that the same $\psi \in \scR$ can be chosen in both conclusions of \cref{thm:GeoRigid:Lp:Annulus} and \cref{thm:GeometricRigidity:Poincare}; we choose such a $\psi$ here.
	We first assume that $r_0 \geq \diam(\Omega)$.
	Using that $\rho$ is nonincreasing,
	\begin{equation}\label{eq:GeometricRigidity:PfBeginning}
		\begin{split}
			\int_\Omega \int_\Omega \rho(x-y) |v(x)-v(y) - (\psi(x)-\psi(y))|^p \, \rmd y \, \rmd x \leq \sum_{k=0}^\infty \rho(r_{k+1}) \cA_{r_k}(v-\psi;\Omega).
		\end{split}
	\end{equation}
	By \cref{thm:GeoRigid:Lp:Annulus}
	\begin{equation*}
		[v-\psi]_{\cS^{\rho,p}(\Omega)}^p \leq C \sum_{k=0}^\infty \rho(r_{k+1}) \left( \sum_{j=0}^k \left( \frac{2^{-k(n+p)}}{2^{-j(n+p)}} \cE_{r_j}(v;\Omega)  \right)^{1/p} \right)^p
	\end{equation*}
	Let $\beta \in \bbR$ be such that $q < 2^\beta < 2^{n+p}$. Then H\"older's inequality gives
	\begin{equation*}
		\sum_{j=0}^k \left( 2^{j(n+p)} \cE_{r_j}(v;\Omega)  \right)^{1/p} \leq \left( \sum_{j=0}^k 2^{\frac{j(n+p-\beta)}{p-1}} \right)^{\frac{p-1}{p}} \sum_{j=0}^k 2^{j \beta} \cE_{r_j}(v;\Omega).
	\end{equation*}
	The first sum can be majorized by $C(n,p) 2^{ \frac{k(n+p-\beta)}{p} }$, so therefore using Tonelli's theorem
	\begin{equation*}
		[v-\psi]_{\cS^{\rho,p}(\Omega)}^p \leq C \sum_{k=0}^\infty \rho(r_{k+1})  \sum_{j=0}^k 2^{-\beta(k-j)} \cE_{r_j}(v;\Omega)
		\leq C \sum_{j=0}^\infty \sum_{k=j}^\infty \rho(r_{k+1}) 2^{-\beta(k-j)} \cE_{r_j}(v;\Omega).
	\end{equation*}
	Now we use \eqref{assu:GeomRigid}; for each $k \geq j$, we apply the doubling condition $k - j$ times: $\rho(r_{k+1}) \leq q \rho(r_k) \leq q^2 \rho(r_{k-1}) \leq \ldots \leq q^{k-j} \rho(r_{j+1})$. Therefore
	\begin{equation*}
		[v-\psi]_{\cS^{\rho,p}(\Omega)}^p \leq C \sum_{j=0}^\infty \rho(r_{j+1}) \cE_{r_j}(v;\Omega) \sum_{k=j}^\infty  (q 2^{-\beta} )^{k-j}.
	\end{equation*}
	The series in $k$ is convergent since $q < 2^\beta$, and therefore to complete the theorem we need to show that
	\begin{equation}\label{eq:Intro:GeometricRigidity:Pf1}
		\sum_{j=0}^\infty \rho(r_{j+1}) \cE_{r_j}(v;\Omega) \leq C(n,p,\rho,\Omega) \int_{\Omega} \int_{\Omega} \rho(|x-y|) \big| |v(x)-v(y)| - |x-y| \big|^p \, \rmd  y \, \rmd x.
	\end{equation}
	On the domain of integration of $\cE_{r_j}$, we have that $|x-y| \leq r_{j-1}$. Thus since $\rho$ is nonincreasing $\rho(r_{j+1}) \leq q^2 \rho(r_{j-1}) \leq q^2 \rho(x-y)$, and
	\begin{equation*}
		\rho(r_{j+1}) \cE_{r_j}(v;\Omega) \leq q^2 \int_\Omega \int_\Omega \mathds{1}_{ \{ \lambda r_j < |x-y| < r_{j-1} \} } \rho(x-y) \big| |v(x)-v(y)| - |x-y| \big|^p \, \rmd  y \, \rmd x.
	\end{equation*}
	Even though the annuli are enlarged in the definition of $\cE_{r_j}(v;\Omega)$, the sets $ \{ (x,y) \in \Omega \times \Omega \, : \, \lambda r_j \leq |x-y| \leq r_{j-1} \} $ still form a locally finite cover of $\Omega \times \Omega$, with any $(x,y)$ contained in at most $N$ sets with $N$ dependent only on $n$, $p$, and $\Omega$. 
	Therefore \eqref{eq:Intro:GeometricRigidity:Pf1} holds.
	
	Now we assume that $r_0 < \diam(\Omega)$. Let $\Lambda \in (0,r_0)$ be as in \cref{thm:GeometricRigidity:Poincare}. Choose $K \in \bbN$ such that $r_{K+1} < \Lambda/8 < r_K$. 
	Then write $1 = \mathds{1}_{ \{ |x-y| > \Lambda/8 \} } + \mathds{1}_{ \{ |x-y| < \Lambda/8 \} } $, and split the integral defining $[v-\psi]_{\cS^{\rho,p}(\Omega)}^p$ accordingly. The first term can be estimated by
	\begin{equation*}
		\frac{2^p}{ (\Lambda/8)^p } \vnorm{v-\psi}_{L^p(\Omega)}^p \int_{B(0,\diam(\Omega))} |z|^p \rho(z) \, \rmd z \leq C \int_{\Omega} \int_{\Omega} \mathds{1}_{ \{ \Lambda < |x-y| \} }\rho(x-y) |x-y|^p |\rme[v](x,y)|^p \, \rmd y \, \rmd x
	\end{equation*}
	thanks to \cref{thm:GeometricRigidity:Poincare}. The second term can be estimated just as in \eqref{eq:GeometricRigidity:PfBeginning} with the sum beginning at $k=K$. Then an argument identical to the first part of the proof shows that the second term is majorized by
	\begin{equation*}
		C(n,p,\rho,\Omega) \int_{\Omega} \int_{\Omega} \mathds{1}_{ \{ |x-y| < \Lambda/2 \} }\rho(x-y) |x-y|^p |\rme[v](x,y)|^p \, \rmd y \, \rmd x.
	\end{equation*}
	Thus the theorem is proved in the general case.
\end{proof}

\noindent Noting that the same rigid motion $\psi$ appears in the conclusions of both \cref{thm:Intro:GeometricRigidity} and \cref{thm:GeometricRigidity:Poincare}, we have the following:
\begin{corollary}\label{cor:FullNormBd}
	Under the assumptions of \cref{thm:Intro:GeometricRigidity}, there exists $C = C(n,p,\rho,\Omega) > 0$ such that for every $v \in \cS^{\rho,p}(\Omega;\bbR^n)$ there exists $\psi \in \scR$ such that
	\begin{equation*}
		\vnorm{v - \psi}_{\cS^{\rho,p}(\Omega)}^p \leq C \int_\Omega \int_\Omega \rho(x-y) |x-y|^p |\rme[v](x,y)|^p \, \rmd y \, \rmd x.
	\end{equation*}
\end{corollary}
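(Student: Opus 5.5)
The corollary follows by adding two estimates that hold for one and the same rigid motion. I would fix $v \in \cS^{\rho,p}(\Omega;\bbR^n)$ and take the rigid motion $\psi \in \scR$ chosen at the start of the proof of \cref{thm:Intro:GeometricRigidity}. That proof selects a single $\psi$ for which the conclusions of \cref{thm:GeoRigid:Lp:Annulus} and \cref{thm:GeometricRigidity:Poincare} hold simultaneously. Both are built from the same coarse-scale motion $\psi_0$ obtained from \cref{lma:LpEst:Annulus} with $r = \diam(\Omega)$. In the Poincar\'e argument this $\psi_0$ is precisely the motion whose $L^p$ distance to $v$ is estimated. In \cref{thm:GeoRigid:Lp:Annulus}, $\psi = \psi_0$ is the output. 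So with this choice:
\begin{equation*}
[v-\psi]_{\cS^{\rho,p}(\Omega)}^p \leq C \int_\Omega\int_\Omega \rho(x-y)|x-y|^p|\rme[v](x,y)|^p \, \rmd y\, \rmd x
\end{equation*}
holds by \eqref{eq:GeometricRigidity}, noting that $|x-y|^p|D[v-\psi](x,y)|^p = |(v-\psi)(x)-(v-\psi)(y)|^p$. In addition,
\begin{equation*}
\vnorm{v-\psi}_{L^p(\Omega)}^p \leq C \int_\Omega\int_\Omega \mathds{1}_{\{\Lambda<|x-y|\}}\rho(x-y)|x-y|^p|\rme[v](x,y)|^p \, \rmd y\, \rmd x
\end{equation*}
holds by \cref{thm:GeometricRigidity:Poincare}.

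For the second bound, I would drop the indicator, since the integrand is nonnegative. Adding the two inequalities then gives $\vnorm{v-\psi}_{\cS^{\rho,p}(\Omega)}^p$ on the left-hand side, by the definition of the norm. The resulting constant depends only on $n,p,\rho,\Omega$.

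The main point to check is that the two theorems really do share a rigid motion. I would verify this by tracing the proofs. The Poincar\'e proof starts from $\psi_0$ and shows $\vnorm{v-\psi_0}_{L^p(\Omega)}^p$ is bounded by the right-hand side. The proof of \cref{thm:Intro:GeometricRigidity} in the case $r_0 \geq \diam(\Omega)$ uses only \cref{thm:GeoRigid:Lp:Annulus}, with $\psi = \psi_0$. In the case $r_0 < \diam(\Omega)$, it uses the Poincar\'e estimate for this same $\psi$ to control the long-range part. Hence no incompatibility arises, and the corollary follows.
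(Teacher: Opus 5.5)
Your proposal is correct and follows the paper's argument. The paper obtains the corollary from exactly this observation: the coarse-scale rigid motion $\psi_0$, produced by \cref{lma:LpEst:Annulus} with $r=\diam(\Omega)$, serves in both \cref{thm:Intro:GeometricRigidity} and \cref{thm:GeometricRigidity:Poincare}, so adding the seminorm bound to the $L^p$ bound (after dropping $\mathds{1}_{\{\Lambda<|x-y|\}}$) gives the full $\cS^{\rho,p}$-norm estimate, and your tracing of why the two results share $\psi_0$ supplies the justification the paper leaves implicit.
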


\section{Justification of Linearization}\label{sec:Justification}

In this section we prove \cref{thm:VariationalConvergence}. The proofs of \cref{thm:EnergyBound} and \cref{thm:GammaConvergence} are completed in the next respective sections.

\subsection{Compactness}

\begin{proof}[Proof of \cref{thm:EnergyBound}]
	Denote $v_\veps(x) = x + \veps u_\veps(x)$. 
	First, by \eqref{eq:AssumptionsOnW:3}
	\begin{equation*}
		\int_\Omega \int_\Omega \rho(x-y) |x-y|^2 |\rme[v_\veps](x,y)|^2 \, \rmd y \, \rmd x \leq \frac{1}{c_0} \veps^2 \cE_\veps(u_\veps).
	\end{equation*}
	\cref{cor:FullNormBd} implies that there exists a rigid motion $\psi_\veps$ depending on $v_\veps$ such that 
	\begin{equation}\label{eq:CompactnessPf:1}
		\vnorm{v_\veps - \psi_\veps}_{\cS^{\rho,2}(\Omega)}^2 \leq C \veps^2 \cE_\veps(u_\veps).
	\end{equation}
	Since $v_\veps(x) = x + \veps g(x)$ on $\omega$ for all $\veps$, and since $\omega \subset \Omega$, we have
	\begin{equation*}
		\vnorm{\mathrm{id} + \veps g - \psi_\veps}_{\cS^{\rho,2}(\omega)}^2 \leq C \veps^2 \cE_\veps(u_\veps),
	\end{equation*}
	implying that
	\begin{equation}\label{eq:CompactnessPf:2}
		\vnorm{\mathrm{id} - \psi_\veps}_{\cS^{\rho,2}(\omega)}^2  \leq C \veps^2 \left( \cE_\veps(u_\veps) + \vnorm{ g }_{\cS^{\rho,2}(\omega)}^2  \right).
	\end{equation}
	
	Now, $x - \psi_\veps(x)$ is an affine vector field, and so a straightforward rescaling argument similar to that of the proof of \eqref{eq:AffineMapEquivalence:Rescaled}
	shows that we can exchange $\vnorm{\mathrm{id} - \psi_\veps}_{\cS^{\rho,2}(\omega)}^2 $ with $\vnorm{\mathrm{id} - \psi_\veps}_{\cS^{\rho,2}(\Omega)}^2 $ at the cost of a constant depending on $n$, $\rho$, $\cL^n(\Omega)$, and $\cL^n(\omega)$. With this, along with \eqref{eq:CompactnessPf:1} and \eqref{eq:CompactnessPf:2},
	\begin{equation*}
		\begin{split}
			\veps^2 \vnorm{u_\veps}_{\cS^{\rho,2}(\Omega)}^2
			&\leq 2 \vnorm{\mathrm{id} + \veps u_\veps - \psi_\veps}_{\cS^{\rho,2}(\Omega)}^2 + 2 \vnorm{\mathrm{id} - \psi_\veps}_{\cS^{\rho,2}(\Omega)}^2 \\
			&\leq 2 \vnorm{v_\veps - \psi_\veps}_{\cS^{\rho,2}(\Omega)}^2 + C \vnorm{\mathrm{id} - \psi_\veps}_{\cS^{\rho,2}(\omega)}^2 \\
			&\leq C \veps^2 \left( \cE_\veps(u_\veps) + \vnorm{g}_{\cS^{\rho,2}(\omega)}^2 \right).
		\end{split}
	\end{equation*}
\end{proof}

\begin{theorem}\label{thm:Equicoercivity}
	With the assumptions of \cref{thm:EnergyBound}, assume additionally that $\rho$ satisfies \eqref{assu:Compact}.
	Then functionals $\overline{\cF}_\veps$ are equicoercive in $L^2(\Omega;\bbR^n)$.
\end{theorem}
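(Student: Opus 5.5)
Equicoercivity in $L^2(\Omega;\bbR^n)$ means the following. For every $t \in \bbR$, the union over $\veps$ of the sublevel sets $\{ u : \overline{\cF}_\veps(u) \leq t \}$ is contained in a compact subset of $L^2$. Equivalently, every sequence $u_\veps$ with $\sup_\veps \overline{\cF}_\veps(u_\veps) < \infty$ has a subsequence converging strongly in $L^2$. The proof has three steps: turn the bound on $\overline{\cF}_\veps$ into a bound on $\cE_\veps$, turn that into an $\cS^{\rho,2}$ bound via \cref{thm:EnergyBound}, and conclude with the compact embedding from \cite{Du2023$} under \eqref{assu:Compact} with $p=2$.

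\textbf{Step 1: control of the load term.} Let $\overline{\cF}_\veps(u_\veps) \leq t$. Then $u_\veps \in \cS^{\rho,2}_g$ and $\cE_\veps(u_\veps) = \cF_\veps(u_\veps) + \scL(u_\veps) \leq t + \vnorm{\ell}_{L^2}\vnorm{u_\veps}_{L^2(\Omega)}$. By \cref{thm:EnergyBound},
\begin{equation*}
\vnorm{u_\veps}_{\cS^{\rho,2}(\Omega)}^2 \leq C\left( t + \vnorm{\ell}_{L^2}\vnorm{u_\veps}_{\cS^{\rho,2}(\Omega)} + \vnorm{g}_{\cS^{\rho,2}(\omega)}^2 \right).
\end{equation*}
Young's inequality $C\vnorm{\ell}\,\vnorm{u_\veps} \leq \tfrac12 \vnorm{u_\veps}^2 + \tfrac{C^2}{2}\vnorm{\ell}^2$ lets us absorb the linear term. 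This gives a bound on $\vnorm{u_\veps}_{\cS^{\rho,2}(\Omega)}$ depending only on $t$, $\ell$, $g$ and the constants, uniformly in $\veps$. Note that $g \in W^{1,\infty}(\Omega)$ implies $\vnorm{g}_{\cS^{\rho,2}(\omega)} < \infty$, since $|x-y|^2\rho(x-y)$ is locally integrable by \eqref{assu:Kernel}.

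\textbf{Step 2: compactness.} The sublevel sets therefore lie in a fixed ball of $\cS^{\rho,2}(\Omega;\bbR^n)$. Under \eqref{assu:Compact} with $p=2$, the result of \cite{Du2023$} says that bounded sets of $\cS^{\rho,2}(\Omega;\bbR^n)$ are precompact in $L^2(\Omega;\bbR^n)$; this uses that $\Omega$ is a bounded Lipschitz domain and that $\rho$ is radial and nonincreasing. Hence the union of sublevel sets is relatively compact in $L^2$, and its closure is the required compact set.

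\textbf{Main obstacle.} The only delicate point is the absorption in Step 1. Because $\scL$ is unbounded below, a bound on $\overline{\cF}_\veps$ alone does not directly bound $\cE_\veps$, so the energy bound must be combined with the linear growth of $\scL$. Beyond that, we must check that the hypotheses of the compact embedding match \eqref{assu:Kernel} and \eqref{assu:Compact} exactly as stated, including radiality and monotonicity of $\rho$.
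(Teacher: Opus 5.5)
Your proposal is correct and is essentially the paper's proof. You bound $\cE_\veps(u_\veps)$ by the sublevel value plus $\vnorm{\ell}_{L^2}\vnorm{u_\veps}_{L^2(\Omega)}$, insert this into \cref{thm:EnergyBound} to get a quadratic-versus-linear inequality for $\vnorm{u_\veps}_{\cS^{\rho,2}(\Omega)}$, and conclude with the compact embedding available under \eqref{assu:Compact} with $p=2$. Your Young's-inequality absorption is just an explicit form of the paper's ``$\leq C(1+\vnorm{u_\veps}_{\cS^{\rho,2}(\Omega)})$, hence bounded''. The hypothesis check you flag is handled by the paper's remark following the theorem, which observes that \eqref{assu:Kernel} and \eqref{assu:Compact} give $\rho>0$ on some interval $(0,R)$, as the embedding result requires.
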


\begin{proof}
	Suppose that a sequence $\{ u_\veps \}$ satisfies $\overline{\cF}_\veps(u_\veps) \leq C < \infty$; thus $\{ u_\veps \} \subset \cS^{\rho,2}_g$. By H\"older's inequality
	\begin{equation*}
		\cE_\veps(u_\veps) \leq C + \int_\Omega \ell \cdot u_\veps \, \rmd x \leq C (1+ \vnorm{u_\veps}_{L^2(\Omega)} ). 
	\end{equation*}
	Then this along with \cref{thm:EnergyBound} implies
	\begin{equation*}
		\vnorm{u_\veps}_{\cS^{\rho,2}(\Omega)}^2 \leq C ( \cE_\veps(u_\veps) + \vnorm{g}_{\cS^{\rho,2}(\omega)}^2) \leq C ( 1 + \vnorm{u_\veps}_{L^2(\Omega)} + C \vnorm{g}_{\cS^{\rho,2}(\omega)}^2) \leq C ( 1 + \vnorm{u_\veps}_{\cS^{\rho,2}(\Omega)}),
	\end{equation*}
	for a constant independent of $u_\veps$ and $\veps$. Thus $ \{ \vnorm{u_\veps}_{\cS^{\rho,2}(\Omega)} \}_\veps$ is bounded.
	Therefore, since $\rho$ satisfies \eqref{assu:Compact}, \cite[Theorem 1.3]{Du2023$} can be applied to conclude that $\{ u_\veps \}$ is precompact in $L^2(\Omega;\bbR^n)$. Moreover, any limit point belongs to $\cS^{\rho,2}(\Omega;\bbR^n)$, and thus the $\overline{\cF}_\veps$ are equicoercive.
\end{proof}

\begin{remark}
	Note that the required assumptions for \cite[Theorem 1.3]{Du2023$} follow from \eqref{assu:Kernel} and \eqref{assu:Compact}; since any nonzero nonincreasing function is locally of bounded variation with at most countably many discontinuities, there exists $R > 0$ such that $\rho(r) > 0$ for $r \in (0,R)$.
\end{remark}

\subsection{\texorpdfstring{$\Gamma$}{Gamma}-convergence}

The proof is similar to that of \cite[Theorem 1.2]{Mengesha2024Linearization}. However, that $\Gamma$-limit was computed without additional constraints and with stricter continuity assumptions on $W$. We provide arguments for the more general result here.

\begin{proof}[Proof of Theorem \ref{thm:GammaConvergence}]
	The result follows from the strong $L^2$-continuity of the functional $\scL$ and the inequalities
	\begin{equation*}
		\overline{\cE}(u) \leq \Gammaliminf_{\veps \to 0^+} \overline{\cE}_\veps(u) \leq \Gammalimsup_{\veps \to 0^+} \overline{\cE}_\veps(u) \leq \overline{\cE}(u)
	\end{equation*}
	for all $u \in L^2(\Omega;\bbR^n)$. The second inequality is trivial, and the first and third inequalities are proved in \cref{lma:Linearization:Liminf} and \cref{lma:Linearization:Limsup}, respectively.
\end{proof}

\begin{lemma}\label{lma:Linearization:Liminf}
	Let $u \in L^{2}(\Omega;\bbR^n)$.   For every sequence $u_{\veps}$ converging to $u$ strongly in $L^2(\Omega;\bbR^n)$, we have
	\begin{equation*}
		\overline{\cE}(u) \leq \liminf_{\veps \to 0^+} \overline{\cE}_\veps(u_\veps).
	\end{equation*}
\end{lemma}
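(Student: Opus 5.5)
We prove the liminf inequality. We may assume $L := \liminf_{\veps} \overline{\cE}_\veps(u_\veps) < \infty$ and, after passing to a subsequence, that the liminf is a limit and $\sup_\veps \overline{\cE}_\veps(u_\veps) < \infty$. Then every $u_\veps \in \cS^{\rho,2}_g$. By \cref{thm:EnergyBound} the sequence $\{u_\veps\}$ is bounded in $\cS^{\rho,2}(\Omega;\bbR^n)$. Since this space is reflexive and $u_\veps \to u$ in $L^2$, we get $u_\veps \rightharpoonup u$ weakly in $\cS^{\rho,2}$. It follows that $u \in \cS^{\rho,2}(\Omega;\bbR^n)$, and the constraint $u = g$ a.e. on $\omega$ passes to the strong $L^2$ limit, so $u \in \cS^{\rho,2}_g$ and $\overline{\cE}(u) = \cE(u)$.

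Next we linearize the integrand on a good set. Write $d_\veps := D[u_\veps]\cdot D[\mathrm{id}]$, $d := D[u]\cdot D[\mathrm{id}]$ and $e_\veps := \rme[\mathrm{id}+\veps u_\veps]/\veps$. By \eqref{eq:LinearizedStrain},
\[
e_\veps - d_\veps = O(\veps |D[u_\veps]|^2).
\]
Since $\rho(x-y)|x-y|^2|D[u_\veps]|^2$ is bounded in $L^1(\Omega\times\Omega)$, the functions $\sqrt{\veps}\,|D[u_\veps]|$ tend to $0$ in measure with respect to the measure $\rho(x-y)|x-y|^2\,\rmd y\,\rmd x$. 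Fix $\delta > 0$ and set
\[
G_\veps := \{(x,y) : \veps|D[u_\veps](x,y)| \le \veps^{1/2}\}.
\]
On $G_\veps$ we have $\veps |e_\veps| \le C\veps^{1/2} \to 0$, so by \eqref{eq:AssumptionsOnW:2} we may Taylor expand:
\[
\veps^{-2} W(\veps e_\veps) \ge \left(\tfrac{\mu}{2} - \omega(\veps^{1/2})\right) e_\veps^2 ,
\]
where $\omega$ is the modulus of continuity of $W''$ at $0$ and $W(0) = W'(0) = 0$ by \eqref{eq:AssumptionsOnW:1}. Moreover, on $G_\veps$,
\[
|e_\veps - d_\veps| \le C\veps^{1/2}|D[u_\veps]|,
\]
so $\mathds{1}_{G_\veps} e_\veps$ and $\mathds{1}_{G_\veps} d_\veps$ differ by a quantity tending to zero in the weighted $L^2$ space $L^2(\Omega\times\Omega;\rho(x-y)|x-y|^2)$. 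Off $G_\veps$ we simply use $W \ge 0$.

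It remains to show that $\mathds{1}_{G_\veps} d_\veps \rightharpoonup d$ weakly in the weighted $L^2$ space. Once this holds, weak lower semicontinuity of the squared norm gives
\[
\liminf_\veps \cE_\veps(u_\veps) \ge \tfrac{\mu}{2}\liminf_\veps \vnorm{\mathds{1}_{G_\veps} e_\veps}^2 \ge \tfrac{\mu}{2}\vnorm{d}^2 = \cE(u).
\]
The weak convergence $d_\veps \rightharpoonup d$ follows because the map $u\mapsto D[u]\cdot D[\mathrm{id}]\,$ is bounded and linear from $\cS^{\rho,2}$ into the weighted $L^2$ space, hence weakly continuous. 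Inserting the cutoff $\mathds{1}_{G_\veps}$ is the main obstacle, and it is resolved as follows. The sequence $\mathds{1}_{G_\veps} d_\veps$ is bounded in the weighted $L^2$ space. For a test function $\phi$ in that space,
\[
\int \mathds{1}_{G_\veps^c}\, d_\veps\, \phi \le \|d_\veps\| \, \|\phi\,\mathds{1}_{G_\veps^c}\| \to 0,
\]
because the weighted measure of $G_\veps^c$ tends to $0$ by Chebyshev's inequality (it is at most $C\veps$) and $|\phi|^2$ is uniformly integrable with respect to that measure. This yields the claim and proves the lemma. Note that the lower bound \eqref{eq:AssumptionsOnW:3} is not needed in this step; it enters only through \cref{thm:EnergyBound}.
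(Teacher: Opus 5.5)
Your proof is correct, but it passes the quadratic term to the limit differently from the paper.

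\textbf{The paper's route.} It argues pointwise. It uses a.e.\ convergence of $u_\veps$, hence of $D[u_\veps]$. It then uses the pointwise bound $W(|z+\xi|-1)\ge\frac{\mu}{2}(\xi\cdot z)^2-\alpha(|\xi|)|\xi|^2$ on the same truncation set $G_\veps=\{|D[u_\veps]|\le\veps^{-1/2}\}$, and concludes with Fatou's lemma. Fatou is also how it gets $u\in\cS^{\rho,2}$.

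\textbf{Your route.} You work in the Hilbert space $L^2(\Omega\times\Omega;\rho(x-y)|x-y|^2\,\rmd y\,\rmd x)$.
\begin{itemize}
\item Reflexivity and \cref{thm:EnergyBound} give $u_\veps\rightharpoonup u$ in $\cS^{\rho,2}$.
\item The bounded linear map $u\mapsto D[u]\cdot D[\mathrm{id}]$ transfers this to $d_\veps\rightharpoonup d$.
\item The truncation costs nothing weakly, because the weighted measure of $G_\veps^c$ is $O(\veps)$.
\item The global bound $|e_\veps-d_\veps|\le 2\veps|D[u_\veps]|^2$ makes $\mathds{1}_{G_\veps}(e_\veps-d_\veps)$ strongly null.
\item Weak lower semicontinuity of the norm replaces Fatou.
\end{itemize}

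\textbf{What each buys.} The paper's argument is more elementary and needs no structure beyond a.e.\ convergence. Yours has two advantages.
\begin{itemize}
\item It never needs pointwise convergence of the truncated quotients $\mathds{1}_{G_\veps}D[u_\veps]$. The paper infers that from the bad set's measure tending to zero, which strictly gives it only along a further subsequence.
\item Strong $L^2$ convergence is used only to identify the weak limit. So your argument proves the liminf inequality for sequences that converge merely weakly in $L^2$. This matters for the $L^1_{loc}$ kernels of \cref{subsec:Counterexamples}, where \cref{thm:EnergyBound} still holds but only weak compactness is available. The Fatou route, which relies on a.e.\ convergence, does not extend there.
\end{itemize}

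The unused ``fix $\delta>0$'' can be deleted.
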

\begin{proof}
	We may assume that $\liminf_{\veps \to 0^+} \overline{\cE}_\veps(u_\veps) < \infty$, otherwise the result is trivial. Then (without relabeling) there is a subsequence $\{ \overline{\cE}_\veps(u_\veps) \}$ that is bounded, hence $\overline{\cE}_\veps(u_\veps) = \cE_\veps(u_\veps)$ for each $\veps$ and so $\{ u_\veps \} \subset \cS^{\rho,2}_g$ by definition of $\overline{\cF}_\veps$. Moreover, by \cref{thm:EnergyBound} we have
	\begin{equation}\label{eq:GammaLimInf:Pf2}
		\vnorm{u_\veps}_{\cS^{\rho,2}(\Omega)}^2 \leq C, \qquad \forall \veps > 0.
	\end{equation}
	We need to show that
	\begin{equation}\label{eq:GammaLimInf:Pf1}
		\liminf_{\veps\to 0^+} \cE_\veps(u_\veps) \geq \cE(u)
	\end{equation}
	for any sequence $u_\veps$ converging to $u$ in $L^2(\Omega;\bbR^n)$ and almost everywhere with $\{ \cE_\veps(u_\veps) \}$ bounded.
	Since by Chebyshev's theorem
	\begin{equation*}
		\cL^{2n}( \{ (x,y) \in \Omega \times \Omega \, : \, |D[u_\veps](x,y)| > \veps^{-1/2} \} ) \to 0 \text{ as } \veps \to 0,
	\end{equation*}
	it follows that
	\begin{equation}\label{eq:GammaLimInf:Pf3}
		\mathds{1}_{\cB_\veps}(x,y) D[u_{\veps}](x,y)
		\to D[u](x,y) \quad \cL^{2n}\text{-a.e. in } \Omega \times \Omega,
	\end{equation}
	where
	\begin{equation*}
		\cB_\veps := \{ (x,y) \in \Omega \times \Omega \, : \, |D[u_\veps](x,y)| \leq  \veps^{-1/2} \}.
	\end{equation*}
	Now, from \eqref{eq:AssumptionsOnW:1} and \eqref{eq:AssumptionsOnW:2} it can be shown that for all $z \in \bbS^{n-1}$ and all $\xi \in \bbR^n$
	\begin{equation*}
		W \left( \frac{|z + \xi|}{|z|} - 1 \right) \geq \frac{\mu}{2} \left( \frac{\xi \cdot z}{|z|^2} \right)^2 - \alpha(|\xi|) |\xi|^2,
	\end{equation*}
	where $\alpha : [0,\infty) \to [0,\infty)$ is an increasing (not necessarily continuous) function such that $\alpha(t) \to 0$ as $t \to 0^+$. Therefore,
	\begin{equation*}
		\begin{split}
			\cE_\veps(u_\veps) &\geq \iint_{\cB_\veps} \frac{\mu}{2} |x-y|^2 \rho(x-y) ( D[u_\veps](x,y) \cdot D[\mathrm{id}](x,y))^2 - \alpha(|D[u_\veps](x,y)|)|D[u_\veps](x,y)|^2 \, \rmd y \, \rmd x \\
			&\geq \int_\Omega \int_\Omega  \frac{\mu}{2} |x-y|^2 \rho(x-y) \mathds{1}_{\cB_\veps} ( D[u_\veps](x,y) \cdot D[\mathrm{id}](x,y))^2 - \alpha(\sqrt{\veps}) \mathds{1}_{\cB_\veps} |D[u_\veps](x,y)|^2 \, \rmd y \, \rmd x.
		\end{split}
	\end{equation*}
	Since the last term is bounded from below by $-\alpha(\sqrt{\veps}) \vnorm{u_\veps}_{\cS^{\rho,2}(\Omega)}$, by \eqref{eq:GammaLimInf:Pf2} and since $\alpha(\sqrt{\veps}) \to 0$ as $\veps \to 0$,
	\begin{equation*}
		\begin{split}
			\liminf_{\veps \to 0^+} \cE_\veps(u_\veps) \geq \frac{\mu}{2} 	\liminf_{\veps \to 0^+} \int_\Omega \int_\Omega |x-y|^2 \rho(x-y) \mathds{1}_{\cB_\veps}(x,y) ( D[u_\veps](x,y) \cdot D[\mathrm{id}](x,y))^2 \, \rmd y \, \rmd x.
		\end{split}
	\end{equation*}
	Then by \eqref{eq:GammaLimInf:Pf3} and Fatou's lemma, \eqref{eq:GammaLimInf:Pf1} follows. To see that we can replace $\cE$ with $\overline{\cE}$, we note that \eqref{eq:GammaLimInf:Pf2} and Fatou's lemma applied to $|D[u_\veps](x,y)|^2$ implies $u \in \cS^{\rho,2}(\Omega;\bbR^n)$. Further, 
	$u_\veps = g$ almost everywhere on $\omega$ and $u_\veps \to u$ almost everywhere on $\Omega$ implies that $u = g$ almost everywhere on $\omega$, so therefore $u \in \cS^{\rho,2}_g$ and $\cE(u) = \overline{\cE}(u)$.
\end{proof}

\begin{lemma}\label{lma:Linearization:Limsup}
	For any $u \in L^2(\Omega;\bbR^n)$, 
	\begin{equation*}
		\Gammalimsup_{\veps \to 0^+} \overline{\cE}_\veps(u) \leq \overline{\cE}(u)\,.
	\end{equation*}
\end{lemma}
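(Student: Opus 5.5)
The plan is the standard two-stage recovery-sequence construction: first build recovery sequences for regular (Lipschitz) admissible fields, then extend to all of $\cS^{\rho,2}_g$ by density and a diagonal argument. If $u \notin \cS^{\rho,2}_g$, then $\overline{\cE}(u) = +\infty$ and there is nothing to prove. So fix $u \in \cS^{\rho,2}_g$.

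\textbf{Step 1 (Lipschitz fields).} Suppose first that $u \in W^{1,\infty}(\Omega;\bbR^n)$ with $u = g$ a.e. on $\omega$, and take the constant sequence $u_\veps = u$.
\begin{itemize}
\item Then $|D[u](x,y)| \leq L := \mathrm{Lip}(u)$ everywhere, so $|\veps D[u]| \leq \veps L$ uniformly in $(x,y)$.
\item By \eqref{eq:LinearizedStrain}, $\rme[\mathrm{id}+\veps u](x,y) = \veps D[u]\cdot D[\mathrm{id}] + O(\veps^2 L^2)$, with the error uniform in $(x,y)$.
\item For $\veps$ small, the strain therefore lies in the neighborhood of $0$ where $W$ is $C^2$ by \eqref{eq:AssumptionsOnW:2}. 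Using $W(0)=0$, $W'(0)=0$ (from \eqref{eq:AssumptionsOnW:1}) and Taylor's theorem, we get $\veps^{-2} W(\rme[\mathrm{id}+\veps u]) \to \frac{\mu}{2}(D[u]\cdot D[\mathrm{id}])^2$ pointwise, with the uniform bound $\veps^{-2} W(\rme[\mathrm{id}+\veps u]) \leq C L^2$.
\item Since $|z|^2\rho(z) \in L^1_{loc}$ by \eqref{assu:Kernel}, the function $C L^2 |x-y|^2 \rho(x-y)$ is an integrable majorant on $\Omega\times\Omega$. Dominated convergence then gives $\cE_\veps(u) \to \cE(u)$.
\end{itemize}
This is exactly where $g \in W^{1,\infty}$ is needed: no upper growth bound on $W$ far from $0$ is available, so the strain must stay uniformly small.

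\textbf{Step 2 (density).} I would show that the set $\{ w \in W^{1,\infty}(\Omega;\bbR^n) : w = g \text{ on } \omega\}$ is dense in $\cS^{\rho,2}_g$ for the $\cS^{\rho,2}$ norm. Writing $u = g + h$, it suffices to approximate $h \in \cS^{\rho,2}(\Omega)$ with $h = 0$ on $\omega$ by Lipschitz functions vanishing on $\omega$. I would use a finite partition of unity adapted to the boundaries.
\begin{itemize}
\item Near $\partial\omega$, which is Lipschitz, translate each localized piece $\varphi_i h$ slightly along a cone direction pointing into $\Omega\setminus\overline{\omega}$. This pushes its support a positive distance away from $\omega$.
\item Near $\partial\Omega$, translate outward so that the piece is defined on a neighborhood of $\overline{\Omega}$.
\item Then mollify at a scale smaller than the translation.
\end{itemize}
Products with smooth cutoffs remain in $\cS^{\rho,2}$ with controlled norm, again because $|z|^2\rho \in L^1_{loc}$. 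Translations are continuous in the $\cS^{\rho,2}$ seminorm: by dominated convergence, $\iint \rho(x-y)|h(x+\tau)-h(y+\tau) - (h(x)-h(y))|^2 \to 0$, using continuity of translation in $L^1$ of the function $\rho(z)|h(\cdot+z)-h(\cdot)|^2$, after a suitable extension of $h$ by zero or reflection. Mollification is continuous in $\cS^{\rho,2}$ by Minkowski's integral inequality. The result is a smooth approximant that vanishes on $\omega$. This density step, in particular controlling the boundary pieces near $\partial\Omega$ and $\partial\omega$ for a general kernel $\rho$, is the step I expect to be the main technical obstacle. I might instead cite a density result from \cite{Du2023$}, if one applies there, to bypass it.

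\textbf{Step 3 (diagonal argument).} The functional $\cE$ is a continuous quadratic form on $\cS^{\rho,2}(\Omega)$, since $(D[u]\cdot D[\mathrm{id}])^2 \leq |D[u]|^2$. Hence if $w_k \to u$ in $\cS^{\rho,2}$, then $\cE(w_k) \to \cE(u)$, and also $w_k \to u$ in $L^2$. The $\Gamma$-$\limsup$ is lower semicontinuous with respect to strong $L^2$ convergence. Combining this with Step 1 gives
\[
\Gammalimsup_{\veps\to 0^+} \overline{\cE}_\veps(u) \leq \liminf_{k\to\infty} \Gammalimsup_{\veps\to 0^+} \overline{\cE}_\veps(w_k) \leq \lim_{k\to\infty} \cE(w_k) = \cE(u) = \overline{\cE}(u).
\]
Equivalently, a diagonal sequence $u_\veps = w_{k(\veps)}$ serves as an explicit recovery sequence.
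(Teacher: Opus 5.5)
Your proposal is correct and follows the paper's proof essentially step for step. You use a constant recovery sequence with dominated convergence for Lipschitz fields in $\cS^{\rho,2}_g$, then density of such fields in $\cS^{\rho,2}_g$ combined with the $L^2$-lower semicontinuity of the $\Gamma$-$\limsup$ and the continuity of $\cE$ on $\cS^{\rho,2}$. The only difference is that you sketch the constrained density argument (localization, small translations, mollification), whereas the paper cites a density theorem for $\cS^{\rho,2}$ and asserts the constrained version using only that $\Omega\setminus\omega$ is Lipschitz.
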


\begin{proof}
	The inequality is trivial if $u\in L^{2}(\Omega;\bbR^d)\setminus \cS^{\rho,2}_g$, so we assume that $u \in \cS^{\rho,2}_g$. We first assume additionally that $u \in W^{1,\infty}(\Omega;\bbR^n)$. Set $M := \Vnorm{u}_{W^{1,\infty}(\Omega)}$. Then $|\cD[u](x,y)| \leq M$ for all $(x,y) \in \Omega \times \Omega$.
	Then directly
	\begin{equation*}
		|\rme[\mathrm{id}+\veps u_\veps](x,y) - \veps D[u_\veps](x,y) \cdot D[\mathrm{id}](x,y)| \leq C(M) \veps^2.
	\end{equation*}
	Thus
	$\overline{\cE}_{\veps}(u) < \infty$ 
	for $\veps > 0$ small enough and 
	by the dominated convergence theorem
	\begin{equation*}
		\lim\limits_{\veps \to 0^+} \cE_{\veps}(u) = \cE(u).
	\end{equation*}
	For the general case $u \in \cS^{\rho,2}_g$, we use the density of smooth functions in the nonlocal space $\cS^{\rho,2}$ (the proof of which is virtually the same as that of \cite[Theorem 3.3]{Mengesha2024Linearization}), as well as the fact that $\Omega \setminus \omega$ is Lipschitz, to obtain that the closure of the function space
	\begin{equation*}
		\{ w \in W^{1,\infty}(\Omega;\bbR^n) \, : \, w = 0 \text{ on } \omega \}
	\end{equation*}
	in the $\cS^{\rho,2}(\Omega;\bbR^n)$ norm is the space
	\begin{equation*}
		\{ w \in \cS^{\rho,2}(\Omega;\bbR^n) \, : \, w = 0 \text{ a.e. on } \omega \}.
	\end{equation*}
	After ``adding $g$ to the two sets,'' we see the existence of a sequence $\{ u_k \} \subset W^{1,\infty}(\Omega;\bbR^d)$ with $u_k = g$ on $\omega$ that converges to $u \in \cS^{\rho,2}_g$ strongly in $\cS^{\rho,2}(\Omega;\bbR^n)$. Thus by the $L^2$ lower semicontinuity of the $\Gammalimsup$ and the continuity of $\cE$ with respect to strong convergence in $\cS^{\rho,2}$
	\begin{equation*}
		\Gammalimsup_{\veps \to 0^+} \overline{\cE}_\veps(u) \leq \liminf_{k \to \infty} \left( \Gammalimsup_{\veps \to 0^+} \overline{\cE}_\veps(u_k) \right) = \liminf_{k \to \infty} \cE(u_k) = \cE(u)\,.
	\end{equation*}
\end{proof}

\subsection{Existence of minimizers}\label{subsec:Existence}

\begin{theorem}\label{thm:Existence}
	Under the assumptions of \cref{thm:VariationalConvergence}, assume additionally that $W$ is lower semicontinuous on $[-1,\infty)$. 
	Then there exists $u \in \cS^{\rho,2}_{g}$ such that
	\begin{equation}\label{eq:ExistenceOfMin}
		\overline{\cF}_\veps(u) = \min_{ w \in \cS^{\rho,2}_{g} } \overline{\cF}_\veps(w).
	\end{equation}
\end{theorem}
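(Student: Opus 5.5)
The plan is to apply the direct method of the calculus of variations to $\overline{\cF}_\veps$ on $\cS^{\rho,2}_g$ for fixed $\veps>0$. First, if $\inf_{\cS^{\rho,2}_g}\overline{\cF}_\veps = +\infty$, every element of $\cS^{\rho,2}_g$ is a minimizer and the claim is trivial. So we assume the infimum $m_\veps$ is finite and take a minimizing sequence $\{w_k\}\subset\cS^{\rho,2}_g$ with $\cF_\veps(w_k)\to m_\veps$.

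The first step is a uniform bound on the sequence, which we get from \cref{thm:EnergyBound} applied with $\veps$ fixed. This gives
\[
\vnorm{w_k}_{\cS^{\rho,2}(\Omega)}^2\le C\big(\cE_\veps(w_k)+\vnorm{g}^2_{\cS^{\rho,2}(\omega)}\big).
\]
Since $\cE_\veps(w_k)=\cF_\veps(w_k)+\scL(w_k)\le C+\vnorm{\ell}_{L^2}\vnorm{w_k}_{L^2}$, the absorption argument of \cref{thm:Equicoercivity} yields $\sup_k\vnorm{w_k}_{\cS^{\rho,2}(\Omega)}<\infty$. By reflexivity we extract a subsequence converging weakly in $\cS^{\rho,2}$ to some $u$. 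By \eqref{assu:Compact} and \cite[Theorem 1.3]{Du2023$}, the subsequence also converges strongly in $L^2$. Passing to a further subsequence, $w_k\to u$ a.e. in $\Omega$. The a.e. convergence gives $u=g$ a.e. on $\omega$, and $u\in\cS^{\rho,2}$ because $\cS^{\rho,2}$ is weakly closed (alternatively, by Fatou's lemma). Hence $u\in\cS^{\rho,2}_g$.

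The second step is lower semicontinuity. Since $w_k\to u$ a.e., we have $D[w_k](x,y)\to D[u](x,y)$ for $\cL^{2n}$-a.e. $(x,y)$. The map $v\mapsto \rme[\mathrm{id}+\veps v](x,y)$ is continuous, so $\rme[\mathrm{id}+\veps w_k]\to\rme[\mathrm{id}+\veps u]$ a.e. Lower semicontinuity of $W$ then gives
\[
W(\rme[\mathrm{id}+\veps u])\le\liminf_k W(\rme[\mathrm{id}+\veps w_k])\quad\text{a.e.}
\]
Because $W\ge0$ and $\rho(x-y)|x-y|^2\ge0$, Fatou's lemma gives $\cE_\veps(u)\le\liminf_k\cE_\veps(w_k)$. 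The strong $L^2$ convergence gives $\scL(w_k)\to\scL(u)$. Together these yield $\cF_\veps(u)\le\liminf_k\cF_\veps(w_k)=m_\veps$, so $u$ attains the minimum in \eqref{eq:ExistenceOfMin}.

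The main obstacle is the first step, specifically the compactness. One point needs checking: \cref{thm:EnergyBound} is stated for sequences in $\veps$, but its proof uses only \cref{cor:FullNormBd} pointwise in $\veps$, so it applies to a family $\{w_k\}$ at fixed $\veps$ with a constant independent of $k$. A second point: if one prefers not to invoke \eqref{assu:Compact}, weak $\cS^{\rho,2}$ convergence alone does not give a.e. convergence, so the Fatou argument would break. I would therefore rely on the compact embedding, which is available under the assumptions of \cref{thm:VariationalConvergence}.
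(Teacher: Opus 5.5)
Your proposal is correct and follows essentially the same route as the paper. Both arguments combine the coercivity bound of \cref{thm:EnergyBound} with absorption of the load term, use the compact embedding under \eqref{assu:Compact} to get strong $L^2$ and a.e.\ convergence of a minimizing sequence (which preserves the constraint on $\omega$), and then apply Fatou's lemma with the lower semicontinuity of $W$ and the continuity of $\scL$. The only difference is minor: you treat the case $m_\veps=+\infty$ as trivial, whereas the paper asserts $\overline{\cF}_\veps(g)<\infty$ and states the lower bound on $\overline{\cF}_\veps$ explicitly.
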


\begin{proof}
	First, $\overline{\cF}_\veps$ is not identically $+\infty$ since $\overline{\cF}_\veps(g) < \infty$. Next, we note that $\overline{\cF}_\veps$ is bounded from below, since by \cref{thm:EnergyBound}
	\begin{equation}\label{eq:Existence:Pf1}
		\vnorm{w}_{\cS^{\rho,2}(\Omega)}^2 \leq C \cE_\veps(w) + C \vnorm{g}_{\cS^{\rho,2}(\omega)}^2 \leq C ( 1 + \cF_\veps(w) + \vnorm{w}_{L^2(\Omega)} ), \quad \forall w \in \cS^{\rho,2}_g.
	\end{equation}
	Therefore $m_\veps = \inf_{ w \in \cS^{\rho,2}_{g} } \overline{\cF}_\veps(w) \in \bbR$; let $\{ w_k \}_k \subset \cS^{\rho,2}_g$ be a minimizing sequence. The bound \eqref{eq:Existence:Pf1} implies that $\{w_k\}$ is a bounded sequence in $\cS^{\rho,2}(\Omega;\bbR^n)$. 
	Therefore we can apply the compactness result of \cite[Theorem 1.3]{Du2023$} to obtain a subsequence (not relabeled) $w_k$ that converges strongly in $L^2(\Omega;\bbR^n)$ and almost everywhere to a function $u \in \cS^{\rho,2}(\Omega;\bbR^n)$. Moreover, $w_k = g$ on $\omega$ for all $k\geq 1$, so $u \in \cS^{\rho,2}_g$.
	By the lower semicontinuity of $W$ and Fatou's lemma
	\begin{equation*}
		\cE_\veps(u) \leq \frac{1}{\veps^2} \int_\Omega \int_\Omega |x-y|^2 \rho(x-y) \liminf_{k \to \infty} W( \rme[\mathrm{id} + \veps w_k](x,y) ) \, \rmd y \, \rmd x \leq \liminf_{k \to \infty} \cE_\veps(w_k).
	\end{equation*}
	Then it follows from the $L^2$-continuity of $\scL$ that $u$ satisfies \eqref{eq:ExistenceOfMin}.
\end{proof}
It is in fact possible to prove \cref{thm:Existence} without \cref{thm:EnergyBound}; one can instead use a nonlocal Poincar\'e inequality to establish the lower bound. Such an inequality holds for nonlocal seminorms with $\rho$ satisfying \eqref{assu:Kernel} and \eqref{assu:Compact}; see for instance \cite[Corollary 1.5]{Du2023$}.

\subsection{Convergence of minimizers}

\begin{proof}[Proof of \cref{thm:VariationalConvergence}]
	By \Cref{thm:GammaConvergence} $\overline{\cF}$ is the $\Gamma$-limit of $\overline{\cF}_\veps$,
	where the $\Gamma$-limit is taken with respect to the strong topology on $L^{2}(\Omega;\bbR^n)$. Moreover, 
	the sequence of functionals $\{\overline{\cF}_\veps \}$ is equicoercive in the strong topology on $L^2(\Omega;\bbR^d)$ by \Cref{thm:Equicoercivity}. We may now apply \cite[Theorem 7.8 and Corollary 7.202]{DalMaso2012introduction} to conclude that $m_\veps \to m$, and that a sequence $\{u_\veps\}_\veps$ of ``almost minimizers'' of $\overline{\cF}_\veps$ satisfying \eqref{eq:AlmostMins} converge strongly in $L^2(\Omega;\bbR^n)$ to a minimizer $u$ of $\overline{\cF}$ with $\overline{\cF}_\veps(u_\veps) \to \overline{\cF}(u)$.
\end{proof}

\section{Further discussions on geometric rigidity}\label{sec:Further}

\subsection{Examples and Counterexamples}\label{subsec:Counterexamples}

In this section we discuss some examples of kernels $\rho$ appearing across nonlocal models in applications, and how the results of this paper can be applied to the associated energies and function spaces.

The fractional kernels (possibly with less singularity on the order of the natural logarithm)
\begin{equation*}
	\rho(z) = \frac{1}{|z|^{n+sp}}, \quad s \in (0,1), \, p \in [1,\infty), \qquad \text{ or } \qquad \rho(z) = \frac{ \mathds{1}_{B(0,1)}(z) }{|z|^{n+sp}(1-\ln(|z|))^\beta}, \quad \beta > 0,
\end{equation*}
satisfy \eqref{assu:Kernel}, \eqref{assu:GeomRigid}, and \eqref{assu:Compact}; they are not singular enough to cause the series in the proof of \cref{thm:Intro:GeometricRigidity} to diverge, but singular enough so that the associated nonlocal spaces are compactly embedded in $L^p$.
Fractional kernels with additional singularity of logarithmic order
\begin{equation*}
	\rho(z) = \frac{ \mathds{1}_{B(0,1)}(z) (-\ln(|z|))^\alpha }{|z|^{n+sp}}, \quad s \in [0,1), \, p \in [1,\infty), \, \alpha > 0,
\end{equation*}
also satisfy all three conditions (note that $s = 0$ is permitted here). In the case $\rho(z) = \rho_s(z) = |z|^{-n-sp}$, the constant $C$ in the geometric rigidity estimate \eqref{eq:GeometricRigidity} has the expression $C = \frac{C'(n,p,\Omega)}{1-s}$ for $s$ near $1$. Such singular behavior of the constant is to be expected. The quantity $(1-s)[v-\psi]_{\cS^{\rho_s,p}(\Omega)}^p \to K_{n,p} \vnorm{\grad(v-\psi)}_{L^p(\Omega)}$ as $s \to 1$, while
\begin{equation*}
	(1-s) \int_{\Omega} \int_{\Omega} \rho_s(x-y) |x-y|^p |\rme[v](x,y)|^p \, \rmd y \, \rmd x \to K_{n,p}' \int_{\Omega} 
\fint_{\bbS^{n-1}} \big| |\grad v(x) \eta| - 1 \big|^p \, \rmd \sigma(\eta) \, \rmd x
\end{equation*}
as $s \to 1$. However, $\fint_{\bbS^{n-1}} \big| |A \eta| - 1 \big|^p  \, \rmd \sigma(\eta) \approx \dist(A,\rmO(n))^p$ for all $A \in \bbR^{n \times n}$. Since estimates of the form
\begin{equation*}
	\inf_{A \in \rmO(n)} \vnorm{\grad v - A}_{L^p(\Omega)}^p \leq C \int_{\Omega} \dist(\grad v(x),\rmO(n))^p \, \rmd x
\end{equation*}
cannot hold in general for $u \in W^{1,p}$, the constant $C$ in \eqref{eq:GeometricRigidity} cannot possibly remain stable as $s \to 1$.

Kernels that belong to $L^1_{loc}(\bbR^n)$, such as
\begin{equation*}
	\rho(z) = \frac{ \mathds{1}_{B(0,\eta)(z)} }{ \eta^{n+p} }, \: \eta > 0, \qquad \rho(z) = \frac{1}{|z|^\alpha}, \: \alpha \in (0,n), \quad \text{ or } \quad \rho(z) = \rme^{-|z|^2/\beta}, \: \beta > 0,
\end{equation*}
satisfy \eqref{assu:Kernel} and \eqref{assu:GeomRigid} (and hence \cref{thm:Intro:GeometricRigidity} holds), but are not singular enough to satisfy \eqref{assu:Compact}. In fact, it is easy to construct a counterexample to the $L^p$-compact embedding result of \cite[Theorem 1.1]{Du2023$} when $p = 2$; take $\Omega$ to be the unit cube, $\Omega'$ to be the concentric cube with side length $1/2$, and $u_\veps$ to be the Fourier basis with frequency $1/\veps$ restricted to $\Omega'$. Then $\vnorm{u_\veps}_{\cS^{\rho,2}(\Omega)} \leq C \vnorm{u_\veps}_{L^2(\Omega)}$ is a bounded sequence, but $u_\veps$ does not converge strongly in $L^2$. However, it can still be shown that \cref{thm:VariationalConvergence} holds for such kernels if we consider convergence in the weak topology on $L^2$; this modification is nontrivial and will be reported in a future work.

Moreover, there exist kernels whose associated nonlocal spaces are compactly embedded in $L^2(\Omega;\bbR^n)$, but for which \cref{thm:Intro:GeometricRigidity} does not hold. Consider the kernel
\begin{equation*}
	\rho(z) = \frac{ \mathds{1}_{B(0,1)}(z) }{ |z|^{n+p} (1-\ln(|z|))^\beta }, \: \beta > 1.
\end{equation*}
Then $\rho$ satisfies \eqref{assu:Kernel} and \eqref{assu:Compact}, but not \eqref{assu:GeomRigid}; $\lim\limits_{r \to 0^+} \frac{\rho(r/2)}{\rho(r)} = 2^{n+p}$. This example thus illustrates that the condition $q < 2^{n+p}$ in \eqref{assu:GeomRigid} is sharp.

Now we show that \cref{thm:Intro:GeometricRigidity} fails for $\rho$. Precisely, for $p \in [1,\infty)$ and $\Omega = B(0,1)$ we show that there exists a family of functions $w_\tau \in W^{1,\infty}(B;\bbR^n)$ parametrized by $\tau > 0$ that satisfy
\begin{equation}\label{eq:Counterexample}
	\begin{gathered}
		\cG(w_\tau) := \int_B \int_B \rho(x-y) |x-y|^p |\rme[w_\tau](x,y)|^p \, \rmd y \, \rmd x \leq \frac{C \tau}{(1-\ln(\tau))^\beta}, \\ \text{while } [w_\tau - \psi]_{\cS^{\rho,p}(B)}^p \geq \frac{C \tau}{(1-\ln(\tau))^{\beta-1}} \quad \forall \psi \in \scR.
	\end{gathered}
\end{equation}
In fact, $w_\tau \to \mathrm{id}$ uniformly on $B$ as $\tau \to 0$, but clearly \cref{thm:Intro:GeometricRigidity} cannot hold since $\displaystyle \liminf_{\tau \to 0^+} \frac{ [w_\tau - \psi]_{\cS^{\rho,p}(B)}^p }{ \cG(w_\tau) } = + \infty$.

First consider $n = 1$ and $\Omega = (-1,1)$. Denote the Heaviside function $\mathds{1}_{[0,\infty)}(x) = H(x)$, and define for $\tau > 0$ the function $w_\tau(x) = x - 2x H(x) + 2(x-\tau)H(x-\tau)$. $w_\tau$ can be thought of as the partial self-interpenetration of the section $[0,\tau]$ via reflection of an infinite elastic bar. The displacement is $u_\tau(x) = w_\tau(x) - x = 2(x-\tau)H(x-\tau) - 2x H(x)$. 
Change variables $z = y-x$ to get
\begin{equation*}
	\cG(w_\tau) = \int_\bbR \int_{\{ |z| < 1 \} } \mathds{1}_{ (-1,1) \cap (-1-z,1-z) }(x) \frac{|\rme[w_\tau](x,x+z)|^p }{ |z| (1-\ln(|z|))^\beta }  \, \rmd z \, \rmd x.
\end{equation*}
Now, $\rme[w_\tau](x,x+z) = 0$ unless $x$ and $x+z$ are in distinct members of the collection of intervals $\{ (-1,0), (0,\tau), (\tau,1) \}$. First take $z \geq 0$. If $x < 0$ and $x+z > 0$, then $-z < x < 0$, and if $0 < x < \tau$ and $x+z > \tau$ then $\tau - z < x < \tau$. Similarly, $\rme[w_\tau](x,x+z) = 0$ for $z < 0$ implies that either $0 < x < -z$ or $\tau < x < \tau - z$.
Therefore
\begin{equation*}
	\begin{split}
		\cG(w_\tau) &\leq \int_\bbR \int_0^1 \mathds{1}_{ (-|z|,0) \cup (\tau-|z|,\tau) }(x) \frac{|\rme[w_\tau](x,x+z)|^p }{ |z| (1-\ln(|z|))^\beta }  \, \rmd z \, \rmd x \\
		&\qquad + \int_\bbR \int_{-1}^0 \mathds{1}_{ (0,|z|) \cup (\tau,\tau+|z|) }(x) \frac{|\rme[w_\tau](x,x+z)|^p }{ |z| (1-\ln(|z|))^\beta }  \, \rmd z \, \rmd x.
	\end{split}
\end{equation*}
Next, since $w_\tau$ is Lipschitz we have $||w_\tau(x)-w_\tau(y)| - |x-y|| \leq 2 |x-y|$, and also since $u_\tau$ is bounded by $2 \tau$ we have  $||w_\tau(x)-w_\tau(y)| - |x-y|| = ||x -y  + u_\tau(x)-u_\tau(y)| - |x-y|| \leq |u_\tau(x) - u_\tau(y)| \leq 4 \tau$. Thus $|\rme[w_\tau](x,y)| \leq 4 \min \{ 1, \tau/|x-y| \}$ and so by the resulting symmetry of the integrand
\begin{equation*}
	\cG(w_\tau) \leq 4^p \int_\bbR \int_0^1 \mathds{1}_{ (-z,z) \cup (\tau-z,\tau+z) }(x) \frac{ \min \{ 1, \tau/z \}^p }{ z (1-\ln(z))^\beta }  \, \rmd z \, \rmd x.
\end{equation*}
Integrating in $x$ and splitting, we therefore have
\begin{equation*}
	\cG(w_\tau) \leq C \int_0^\tau \frac{1}{(1-\ln(z))^\beta} \, \rmd z + C \int_\tau^1 \frac{\tau^p}{ z^p (1-\ln(z))^\beta } \, \rmd z \leq C \frac{\tau}{(1-\ln(\tau))^\beta}.
\end{equation*}

For the lower bound, when $n = 1$ the only $\psi$ we need to consider are $\psi(x) = x$ and $\psi(x) = -x$. When $\psi(x) = -x$, then $D[w_\tau - \psi](x,y) = 2$ for $x < 0$, $y< 0$ and therefore a lower bound for $	[w_\tau - \psi]_{\cS^{\rho,p}(\Omega)}^p$ holds independent of $\tau$. When $\psi(x) = x$ then we use the portion of the seminorm on the reflected part of the reference configuration to obtain
\begin{equation*}
	[w_\tau - \psi]_{\cS^{\rho,p}(\Omega)}^p \geq \int_{\tau/4}^{3\tau/4} \int_{x-\tau/8}^{x + \tau/8} \rho(x-y) 2^p|x-y|^p \, \rmd y \, \rmd x = C \tau \int_0^{\tau/8} \frac{1}{z(1-\ln(z))^\beta} \, \rmd z
\end{equation*}
A change of variables in the last integral gives the lower bound in \eqref{eq:Counterexample}.

For general $n \geq 2$, define $v_\tau(x) = (w_\tau(x_1),x_2,\ldots,x_n)$. Change variables $z = y-x$; similarly to the above, for all $|z| \leq 1$ the $\cL^n$-measure of $\{ x \in B \, : \, \rme[v_\tau](x,x+z) \neq 0 \}$ does not exceed $C(n) |z|$. The bound $|\rme[w_\tau](x,y)| \leq 4 \min \{ 1, \tau/|x-y| \}$ still holds, and using polar coordinates we arrive at the analogous upper bound on $\cG(w_\tau)$.

For the lower bound, we need to consider general $\psi \in \scR$, more precisely general $A \in \rmO(n)$. 
By restricting the domain of integration and then converting to polar coordinates we have
\begin{equation*}
	\begin{split}
	[v_\tau - A(\cdot)]_{\cS^{\rho,p}(\Omega)}^p &\geq \int_{B \cap \{ -3/4 < x_1 < -1/4 \} } \int_{B(x, 1/8)} \rho(x-y) |(I-A)(x-y)|^p \, \rmd y \, \rmd x \\ 
	&\quad + \int_{B(0,1/4) \cap \{ \tau/4 < x_1 < 3\tau/4 \} } \int_{B(x, \tau/8)} \rho(x-y) |(B-A)(x-y)|^p \, \rmd y \, \rmd x \\
	&\geq C \int_0^{1/8} \frac{1}{r (1-\ln(r))^\beta } \, \rmd r \int_{\bbS^{n-1}} |(I-A)\eta|^p \, \rmd \sigma(\eta) \\
	&\quad + C \tau \int_0^{\tau/8} \frac{1}{r (1-\ln(r))^\beta } \, \rmd r \int_{\bbS^{n-1}} |(B-A)\eta|^p \, \rmd \sigma(\eta), 
	\end{split}
\end{equation*}
where $B = \diag(-1,1,\ldots,1)$ (Note that $(\int_{\bbS^{n-1}} |(I-A)\eta|^p \, \rmd \sigma(\eta))^{1/p}$ defines a norm on matrices).
If $|A - I| > 1$ then we use the first term to obtain a lower bound independent of $\tau$ and $A$. If $|A-I| \leq 1$ then $|B-A| \geq 1$, and the second term gives the lower bound $\frac{C\tau}{(1-\ln(\tau))^{\beta-1}}$ independent of $A$. Thus \eqref{eq:Counterexample} is established for $n \geq 2$.

\subsection{Nonlocal Korn's inequalities}

\begin{proof}[Proof of \cref{thm:Korn}]
	Thanks to the density result of \cite[Theorem 3.3]{Mengesha2024Linearization} it suffices to prove \eqref{eq:Korn} for $u \in C^\infty(\overline{\Omega})$. To choose the vector field $\psi$, we use a projection operator from \cite{MengeshaConsistency} inspired by \cite{Kohn1979New}. Let $\bx_0 \in \Omega$ and abbreviate $B = B(x_0,\frac{1}{2} \dist(x_0,\p \Omega))$. Denote the class of skew-symmetric matrices as $\mathrm{Skw}(n) = \{ A \in \bbR^{n \times n} \, : \, A = - A^T \}$, and denote the class of \textit{infinitesimal rigid motions} as $\scX = \{ Ax + b \, : A \in \mathrm{Skw}(n), b \in \bbR^n \}$. Define $\Pi : L^1(B;\bbR^n) \to \scX$ to be the projection operator as introduced in \cite[Lemma 2.1]{MengeshaConsistency}. Then $\Pi$ is a continuous linear operator with $\vnorm{\Pi[u]}_{\mathrm{aff}} \leq C(n,p) \vnorm{u}_{L^p(B)}$ for all $p \in [1,\infty]$, with $\Pi[A(\cdot) + b](x) = \frac{1}{2}(A-A^T)x + b$ for any affine vector field. We choose $\psi = \Pi[u]$.
	
	We may replace $u$ with $u - \Pi[u]$, since the projected difference seminorm on the right-hand side of \eqref{eq:Korn} is unchanged by subtracting infinitesimal rigid motions. Therefore we assume henceforth that $\psi = \Pi[u] = 0$.
	
	For $\veps > 0$ sufficiently small, apply \cref{thm:Intro:GeometricRigidity} and \cref{thm:GeometricRigidity:Poincare} to the function $v_\veps(x) = x+ \veps u(x)$ to obtain $\psi_\veps = A_\veps x + b_\veps \in \scR$ such that
	\begin{equation}\label{eq:Korn:Pf1}
		\vnorm{v_\veps - \psi_\veps}_{\cS^{\rho,p}(\Omega)}^p \leq C \int_{\Omega} \int_{\Omega} \rho(x-y) |x-y|^p |\rme[v_\veps](x,y)|^p \, \rmd y \, \rmd x.
	\end{equation}
	Similarly to \eqref{eq:LinearizedStrainPotential}, 
	\begin{equation*}
		|\rme[v_\veps](x,y)|^p = C(p) \veps^p |D[u](x,y) \cdot D[\mathrm{id}](x,y)|^p + O(\veps^{2p} |x-y|^p)
	\end{equation*}
	for all $x$ and $y$ and all $\veps > 0$. Multiplying by $\rho(x-y)|x-y|^p$ and integrating, the error term remains bounded by $\veps^{2p}$ after integration, and so the right-hand side of \eqref{eq:Korn:Pf1} becomes 
	\begin{equation*}
		C \veps^p \int_\Omega \int_\Omega \rho(x-y) |x-y|^p \big| D[u](x,y) \cdot D[\mathrm{id}](x,y)\big|^p \, \rmd y \, \rmd x + O(\veps^{2p}) := C \veps^p [u]_{\cX^{\rho,p}(\Omega)}^p + O(\veps^{2p}).
	\end{equation*}

	We need to find the leading order of the left-hand side of \eqref{eq:Korn:Pf1}, which requires us to get a better understanding of the rigid motion $\psi_\veps$. Precisely, we need to estimate the distance of a matrix from $I$ using its projection to the tangent space of $\rmO(n)$ at $I$.
	First,
	\begin{equation}\label{eq:Korn:Pf2}
		\begin{split}
			\veps^p \vnorm{u}_{\cS^{\rho,p}(\Omega)}^p &\leq C \vnorm{ \mathrm{id} + \veps u - \psi_\veps}_{\cS^{\rho,p}(\Omega)}^p + C  \vnorm{ \mathrm{id} - \psi_\veps}_{\cS^{\rho,p}(\Omega)}^p \\
			&\leq C \vnorm{ \mathrm{id} - \psi_\veps}_{\mathrm{aff}}^p + C \veps^p [u]_{\cX^{\rho,p}(\Omega)}^p + O(\veps^{2p}).
		\end{split}
	\end{equation}
	Then since $u \in C^\infty$, $\veps^p \vnorm{u}_{\cS^{\rho,p}(\Omega)}^p \to 0$ as $\veps \to 0$, and the same inequality \eqref{eq:Korn:Pf2} with the roles of $\veps u$ and $\mathrm{id} - \psi_\veps$ reversed shows that $ \vnorm{ \mathrm{id} - \psi_\veps}_{\mathrm{aff}} \to 0$ as $\veps \to 0$. Let $\veps_0 > 0$ be such that $ \vnorm{ \mathrm{id} - \psi_\veps}_{\mathrm{aff}} \leq 1$ for all $\veps < \veps_0$. Then
	\begin{equation*}
		|A_\veps - I| \leq 2|A_\veps - I| - |A_\veps-I|^2 \leq | 2(A_\veps - I) + (A_\veps - I)^2 | = |A_\veps^2 - I| = |A_\veps - A_\veps^T|,
	\end{equation*}
	where in the last equality we used that $A_\veps$ belongs to $\rmO(n)$. Now, by continuity of the projection $\Pi$, and since $\Pi[v_\veps] = \Pi[\mathrm{id} + \veps u] = 0$, we have for $\veps < \veps_0$
	\begin{equation*}
		\begin{split}
			\vnorm{ \mathrm{id} - \psi_\veps}_{\mathrm{aff}} = |A_\veps - I| + |b_\veps| &\leq |A_\veps - A_\veps^T| + |b_\veps| \\
			&= \vnorm{ \Pi[\psi_\veps] }_{\mathrm{aff}}
			= \vnorm{ \Pi[v_\veps - \psi_\veps] }_{\mathrm{aff}} \\
			&\leq \vnorm{v_\veps - \psi_\veps}_{\cS^{\rho,p}(\Omega)}
			\leq C \veps [u]_{\cX^{\rho,p}(\Omega)} + O(\veps^{2}).
		\end{split}
	\end{equation*}
	Therefore using \eqref{eq:Korn:Pf2} and dividing by $\veps^p$,
	\begin{equation*}
		\vnorm{u}_{\cS^{\rho,p}(\Omega)}^p \leq C \int_\Omega \int_\Omega \rho(x-y) |x-y|^p \big| D[u](x,y) \cdot D[\mathrm{id}](x,y)\big|^p \, \rmd y \, \rmd x + O(\veps^{p}),
	\end{equation*}
	and the result follows by taking $\veps \to 0$.
\end{proof}

\bibliography{ReferencesFull}
\bibliographystyle{plain}

\end{document}